\documentclass{article}

\usepackage{graphicx}
\usepackage{caption}
\usepackage{subcaption}
\usepackage[english]{babel}
\usepackage[utf8]{inputenc}

\usepackage[letterpaper,top=2cm,bottom=2cm,left=3cm,right=3cm,marginparwidth=1.75cm]{geometry}

\usepackage{stmaryrd}
\usepackage{mathtools}
\usepackage{chngcntr}
\usepackage{indentfirst}
\usepackage{amsmath}
\usepackage{graphicx}
\usepackage{amssymb}
\usepackage{amsthm}
\usepackage[colorlinks=true, allcolors=blue]{hyperref}
\usepackage{mathrsfs}
\usepackage{tikz-cd}
\usetikzlibrary{arrows.meta}
\usepackage{tikz}
\usetikzlibrary{arrows.meta,patterns}
\usepackage{stmaryrd}
\usepackage[title]{appendix}

\usepackage[doi = false, isbn = false, url = false, style = alphabetic, maxnames = 100]{biblatex}
    \DeclareFieldFormat{postnote}{#1}
    \AtBeginDocument{%
       \def\MR#1{}
    }
\usepackage{cleveref}
\newtheorem{thm}{Theorem}[subsection]
\newtheorem{lemma}[thm]{Lemma}

\newtheorem{rmk}[thm]{Remark}
\newtheorem{prop}[thm]{Proposition}
\newtheorem{defn}[thm]{Definition}
\newtheorem{conj}[thm]{Conjecture}

\Crefname{thm}{Theorem}{Theorems}
\crefname{lemma}{lemma}{lemmas}
\Crefname{lemma}{Lemma}{Lemmas}
\crefname{cor}{corollary}{corollaries}
\Crefname{cor}{Corollary}{Corollaries}
\crefname{rmk}{remark}{remarks}
\Crefname{rmk}{Remark}{Remarks}
\crefname{prop}{proposition}{propositions}
\Crefname{prop}{Proposition}{Propositions}
\crefname{defn}{definition}{definitions}
\Crefname{defn}{Definition}{Definitions}
\crefname{conj}{conjecture}{conjectures}
\Crefname{conj}{Conjecture}{Conjectures}
\crefname{eg}{example}{examples}
\Crefname{eg}{Example}{Examples}

\usepackage{tikz}
\usetikzlibrary{cd}
\usetikzlibrary{arrows}
\usetikzlibrary{matrix}

\usepackage{bbm}

\newcommand{\Z}{\mathbb Z}

\newcommand{\R}{\mathbb R}

\newcommand{\T}{\mathbb T}
\newcommand{\Q}{\mathbb Q}

\newcommand{\mbL}{\mathbb L}
\newcommand{\C}{\mathbb C}

\newcommand{\HH}{\mathcal H}
\newcommand{\F}{\mathbb F}
\newcommand{\A}{\mathcal A}
\newcommand{\mcL}{\mathcal L}
\newcommand{\mcF}{\mathcal F}
\newcommand{\G}{\mathbb G}
\newcommand{\mf}{\mathfrak}

\newcommand{\M}{\mathcal{M}}
\newcommand{\B}{\mathcal{B}}

\newcommand{\mc}{\mathcal}
\newcommand{\mcC}{\mathcal C}

\newcommand{\DL}{\mathrm{DL}}
\newcommand{\RP}{\mathrm{RP}}
\newcommand{\GL}{\mathrm{GL}}
\newcommand{\SL}{\mathrm{SL}}
\newcommand{\Ad}{\mathrm{Ad}}
\newcommand{\Irr}{\mathrm{Irr}}

\newcommand{\fg}{\mathfrak g}

\newcommand{\ft}{\mathfrak t}

\newcommand{\bt}{\bar{\mathfrak t}}
\newcommand{\bg}{\bar{\mathfrak g}}

\newcommand{\bl}{\bar{\mathfrak l}}

\usepackage{tikz}
\usetikzlibrary{cd}
\usetikzlibrary{arrows}
\usetikzlibrary{matrix}
\tikzset{
    vertex/.style = {
        circle,
        fill=black,
        outer sep=2pt,
        inner sep=1pt
    }}
\usepackage{float}

\title{Stability of certain elements in the Bernstein center}
\author{Sarbartha Bhattacharya and Shenghao Li}
\numberwithin{equation}{subsubsection}
\renewcommand{\theequation}{%
  \ifnum\value{subsection}>0 %
    \ifnum\value{subsubsection}>0 %
      \thesubsubsection.\arabic{equation}
    \else
      \thesubsection.\arabic{equation}
    \fi
  \else
    \thesection.\arabic{equation}
  \fi
}
\numberwithin{thm}{subsubsection}
\renewcommand{\thethm}{%
  \ifnum\value{subsection}>0 %
    \ifnum\value{subsubsection}>0 %
      \thesubsubsection.\arabic{thm}
    \else
      \thesubsection.\arabic{thm}
    \fi
  \else
    \thesection.\arabic{thm}
  \fi
  }

\newtheoremstyle{normalexample}
  {3pt} 
  {3pt} 
  {\normalfont} 
  {} 
  {\bfseries} 
  {.} 
  {.5em} 
  {} 

\theoremstyle{normalexample}

\date{}

\begin{document}
\maketitle

\begin{abstract}
Let $G$ be a split connected reductive algebraic group over a non-archimedean local field $F$ of characteristic zero. The stable center conjecture asserts that the subspace of stable elements in the Bernstein center of $G(F)$ is a sub-algebra, and this subring is conjecturally isomorphic to the regular functions on the moduli of semi-simple Langlands parameters. Bezrukavnikov-Kazhdan-Varshavsky suggested an approach towards the stable center conjecture by explicitly constructing elements and proving their stability directly without assuming LLC, and proved the stability of the depth-$r$ projectors. Following the idea, the first author and Tsao-Hsien Chen produced a family of elements in the center which are conjecturally stable using Deligne-Lusztig parameters attached to irreducible representations. This paper proves the stability of the projectors corresponding to strongly regular Deligne-Lusztig parameters in depth-zero, and for all the positive depth elements with some restrictions on the residue characteristic.

\end{abstract}
\tableofcontents
\section{Introduction}

Let $F$ be a non-archimedean local field of characteristic zero with ring of integers $\mf O$ and residue field $k=\F_q$ of characteristic $p$. We fix a separable closure $\bar{F}$ of $F$. Let $F^{nr}$ denote the maximal unramified extension of $F$ and $F^t$ denote the maximal tamely ramified extension of $F$. Let $G$ be a split connected reductive algebraic group defined over $F$. Fix an $F$-split maximal torus $T\subset G$. Further, we fix a Haar measure $\mu$ on $G(F)$ and let $\HH(G)$ denote the space of compactly supported smooth functions on $G(F)$. Let $R(G)$ denote the category of smooth complex representations of $G(F)$ and $\mf{Z}(G)=\mathrm{End}(\mathrm{Id}_{R(G)})$ denote the Bernstein center. As in \cite[\S 3]{haines2014stable}, the Bernstein center $\mathfrak{Z}(G)$ acts on every irreducible representation of $G(F)$ by a scalar. It can also be described as the algebra of essentially compact invariant distributions on $G(F)$. For further details about the structure and equivalent descriptions of the center, see \cite{bhattacharya2025description,haines2014stable}. 

Denote the set of (isomorphism classes of) smooth irreducible representations of $G(F)$ by $\Irr(G)$. Moy and Prasad in \cite{MP94} attached a non-negative rational number $\rho(\pi)\in \Q_{\geq 0}$ to each $\pi \in \Irr(G)$, called the depth of $\pi$. For any non-negative rational number $r\in\mathbb Q_{\geq 0}$,
let $R(G)_{\leq r}$ (resp, $R(G)_{> r}$) denote the full subcategory of smooth representations whose irreducible subquotients have depth $\leq r $ (resp, depth $>r$). Let $\Irr(G)_r$ denote the set of smooth irreducible representations of $G(F)$ of depth $r$ (similarly $\Irr(G)_{\leq r}$, $\Irr(G)_{<r}$ and $\Irr(G)_{>r}$). The results of Bernstein and Moy-Prasad (\cite{Ber, MP94, moy1996jacquet}) imply that the category $R(G)$ decomposes as a direct sum
 $R(G) = R(G)_{\leq r} \oplus R(G)_{> r}$, and hence the Bernstein center also decomposes as $\mf Z(G) = \mf Z^r(G) \oplus \mf Z^{>r}(G)$. We call $\mf Z^r(G)$ the depth-$r$ Bernstein center, and it acts non-trivially only on irreducible representations in $\Irr(G)_{\leq r}$.

\subsection{The stable center}
An element $z \in \mf Z(G)$ of the center is called stable if its associated invariant distribution $\nu_z$ is stable (for more details, see \cite[\S 3]{bkv2016depthrprojector} and \cite[\S 2.3]{bhattacharya2025description}). This condition is expected to have a spectral interpretation through the local Langlands correspondence. In particular, the stable characters (the appropriate linear combinations of characters of tempered $L$-packets) is conjectured to be stable, and therefore stable distributions should reflect the organization of $L$-packets. Let $\mf Z^{\mathrm{st}}\subset\mathfrak{Z}(G)$ denote the subset of stable elements in the Bernstein center. By definition, it is a vector subspace. The stable center conjecture states that $\mf Z^{\mathrm{st}}(G)\subset \mf Z(G)$ is a subalgebra, and there is a decomposition of the set $\Irr(G)$ of irreducible representations of $G(F)$ into packets via characters of $\mf Z^{\mathrm{st}}(G)$, which is slightly coarser than the conjectural decomposition into $L$-packets as per the Local Langlands correspondence (see \cite[Section 3.1.4]{bkv2013categorical}). \par

The stable Bernstein center has a conjectural definition as regular functions on the moduli space of semisimple Langlands parameters (\cite{scholzeshin2011cohomologycompactunitarygroup,farguesscholze2024geometrizationlocallanglandscorrespondence,bkv2013categorical,chen2023towards}). Under the assumption that Local Langlands correspondence for $G$ and its Levi subgroups is known, and some compatibility of LLC with normalized parabolic induction (see LLC+ as defined in \cite[\S 5.2]{haines2014stable}), Haines in \cite{haines2014stable} gives a description of the stable center as the ring of regular functions on the variety of infinitesimal characters. Under several assumptions including $G$ is quasi-split, LLC+ is known for $G$ and the existence of tempered $L$-packets etc. (see \cite[Proposition 1.1.7]{varma2024some}), Varma showed in \cite{varma2024some} that the different notions of the stable Bernstein center are equivalent, and hence the stable center conjecture is true.  If LLC is known for $G$, the elements of the stable Bernstein center are conjectured to act by the same constant on one infinitesimal class, as defined in \cite[\S 5.2]{haines2014stable}, which are unions of $L$-packets.

Bezrukavnikov-Kazhdan-Varshavsky in \cite{bkv2013categorical} suggested an approach towards the stable center conjecture by constructing enough elements in the center which conjecturally lie in the stable Bernstein center, and proving their stability directly without assuming LLC in cases where it is unknown. As an example of their approach, they gave a construction and proved the stability of the depth-zero projector in \cite{bkv2013categorical} using a geometric approach and homology of affine Springer fibers. Moreover, in \cite{bkv2016depthrprojector}, they gave a description of the depth-$r$ projectors for rational depths as Euler-Poincar\'e sums and proved their stability. The approach for the depth zero projector involves establishing a relationship with the Steinberg character and using the stability of the same. In rational depths, they used Fourier transforms to reduce the question to proving stability of certain subsets in the $p$-adic group, the Lie algebra and the dual Lie algebra. 

In \cite{bhattacharya2024description,bhattacharya2025description}, the first author and Tsao-Hsien Chen gave a description of the depth-$r$ center as an inverse limit of certain Hecke algebras, and constructed elements in the center which act as projectors onto subcategories determined by Deligne-Lusztig parameters. We describe the results and constructions in \cite{bhattacharya2025description} in a bit more detail in the next section. These projector elements were conjectured to be stable (\cite[Conjecture 1.1]{bhattacharya2025description}). In this work, we prove their conjecture for the case of strongly regular parameters in depth zero and uniformly for positive depths, with a long term goal of making progress towards the stable center conjecture via the path suggested by Bezrukavnikov-Kazhdan-Varshavsky.

\subsection{Depth-$r$ center and conjecturally stable elements}

Let $\B(G,F)$ denote the reduced Bruhat-Tits building of $G$. For $x \in \mc B(G,F)$ and $r \in \R_{\geq 0}$, let $G_{x,r}=G(F)_{x,r}$ denote $r$-th Moy-Prasad filtration subgroup of $G(F)$ as defined in \cite{MP94} and $G_{x,r+}:=\cup_{s>r}G_{x,s}$. Let $\A(T,F)=\A(G,T,F)\subset \B(G,F)$ be  the reduced apartment corresponding to $T$, and we fix a chamber $\mcC \subset \A(T,F)$ in the apartment. This is equivalent to fixing a set of standard parahorics containing a fixed Iwahori, since they correspond to the parahoric subgroups of the facets in the closure of the fixed chamber. In \cite{bhattacharya2025description}, they replace the usual simplicial structure on the building $\B(G,F)$ by a refinement by subdividing the facets into smaller parts depending on $m \in \Z_{>0}$ when the depth $r \in \frac{1}{m}\Z_{\geq 0}$. For $r \in  \frac{1}{m}\Z_{>0} $, the vanishing hyperplanes corresponding to the refined affine roots (see \cite[Section 2.5]{moysavin} and \cite[Section 4.1]{bhattacharya2025description}) are used to give $\B(G,F)$ (similarly the apartment, chamber) a refined simplicial structure, and we denote it by $\B_m$( resp. $\mc A_m, \mcC_m$). Let $[\B_m]$ (similarly $[\mc A_m], [\bar{\mcC}_m]$) denote the set of refined facets in $\B_m$( resp. $\mc A_m, \bar{\mcC}_m$). For example, see the case of $\SL_3(\Q_p)$ below. 
 \begin{figure}[H]
    \begin{subfigure}[b]{0.32\textwidth}
        \centering
        \resizebox{0.5\linewidth}{!}{
\begin{tikzpicture}[scale=0.5]
            \node[inner sep=0pt]  (A) at (0,0){};
            \node [inner sep=0pt] (B) at (3,0){};
            \node[inner sep=0pt]  (C) at (1.5, -2.598){};
            \node[inner sep=0pt]  (A1) at (-3,0){};
            \node [inner sep=0pt] (B1) at (6,0){};
            \node[inner sep=0pt]  (C1) at (-1.5, 2.598){};
             \node[inner sep=0pt]  (D) at (3, -5.196){};
             \node[inner sep=0pt]  (D1) at (0, -5.196){};
             \node[inner sep=0pt]  (D2) at (4.5, 2.598){};
             \node [inner sep=0pt] (E) at (6,-2.598){};
              \node [inner sep=0pt] (E1) at (-3,-2.598){};
               \node [inner sep=0pt] (F) at (-3,-5.196){};
              \node [inner sep=0pt] (F1) at (1.5,2.598){};
              \node [inner sep=0pt] (G) at (6,-5.196){};
             
            \draw (A)--(B)--(C)--(A);
            \draw[dashed] (A1)--(A)--(C1);
            \draw [dashed](D1)--(C)--(D);
            \draw [dashed](D2)--(B)--(B1);
             \draw [dashed](E)--(E1);
            \draw [dashed](F)--(F1);
            \draw [dashed](G)--(F1);
            \draw node[vertex](A) at (0,0){};
            \draw node[vertex](B) at (3,0){};
            \draw node[vertex](C) at (1.5, -2.598){};
            \end{tikzpicture}
        }
        \caption{Apartment of $\SL_3(\Q_p)$}
        \label{fig:buildingGL3}
    \end{subfigure}
    \hfill
    \begin{subfigure}[b]{0.32\textwidth}
    \centering
        \resizebox{0.5\linewidth}{!}{
        \begin{tikzpicture}[scale=0.5]
            \node[inner sep=0pt]  (A) at (0,0){};
            \node [inner sep=0pt] (B) at (3,0){};
            \node[inner sep=0pt]  (C) at (1.5, -2.598){};
            \draw (A)--(B)--(C)--(A);
            \draw node[vertex](A) at (0,0){};
            \draw node[vertex](B) at (3,0){};
            \draw node[vertex](C) at (1.5, -2.598){};   
        \end{tikzpicture}
        }
        \caption{Integral depths}   
        \label{fig:integer}
    \end{subfigure}
    \hfill
    \begin{subfigure}[b]{0.32\textwidth}
        \centering
        \resizebox{0.5\linewidth}{!}{
            \begin{tikzpicture}[scale=0.5]
            \node[inner sep=0pt]  (A) at (0,0){};
            \node [inner sep=0pt] (B) at (3,0){};
            \node[inner sep=0pt]  (C) at (1.5, -2.598){};
            \node [inner sep=0pt] (AB) at (1.5,0){};
            \node [inner sep=0pt] (BC) at (2.25,-1.299){};
            \node [inner sep=0pt] (CA) at (0.75,-1.299){};
            \draw (A)--(B)--(C)--(A);
            \draw (AB)--(BC)--(CA)--(AB);
            \draw node[vertex](A) at (0,0){};
            \draw node[vertex](B) at (3,0){};
            \draw node[vertex](C) at (1.5, -2.598){};
            \draw node[vertex](AB) at (1.5,0){};
            \draw node[vertex](CA) at (0.75,-1.299){};
            \draw node[vertex](BC) at (2.25,-1.299){};

        \end{tikzpicture}
        }
        \caption{Depth = $\frac{1}{2}$}
        \label{fig:depthhalf}
    \end{subfigure}

\end{figure}
For $r \in \frac{1}{m}\Z_{\geq 0}$ and $\sigma \in [\B_m]$ a refined facet, note that $G_{\sigma,r}$ and $G_{\sigma,r+}$ are well defined. For each $\sigma \in [\B_m]$, we can define  
$\M_\sigma^r:=C_c^\infty\left(\frac{G(F)/G_{\sigma,r+}}{G_{\sigma,0}}\right)$ to be the algebra of compactly supported smooth functions which are $G_{\sigma,r+}$ bi-invariant and $G_{\sigma,0}$-conjugation invariant. We have a partial order on $[\B_m]$ given by $\sigma' \preceq \sigma $ if $\sigma'$ is contained in the closure of $\sigma$, and this gives a partial order on $[\Bar{\mcC}_m]$. For $\sigma',\:\sigma\in [\Bar{\mcC}_m] $ and $\sigma' \preceq \sigma $, we have a map 
\begin{align*}
\phi^r_{\sigma',\sigma}:\M^r_{\sigma'} &\longrightarrow \M^r_{\sigma}\\
f &\longmapsto f*\delta_{G_{\sigma , r+}}  
\end{align*}
where $\delta_{G_{\sigma , r+}} =\frac{1}{\mu(G_{\sigma,r+})}1_{G_{\sigma,r+}}$ is the unit element in $\mc M_\sigma^r$. Adding some extra morphisms \cite[Section 3]{bhattacharya2025description} to the inverse system over refined facets, the description is proved in \cite{bhattacharya2025description}, generalizing the integral depth results in \cite{bhattacharya2024description}

\begin{thm}[Theorem 3.8,\cite{bhattacharya2025description}]\label{thm:Descriptionrationaldepths}
    There is an explicit algebra isomorphism $[A^r]:  A^r(G) := \lim_{\sigma\in [\Bar{\mcC}_m] } \M^r_{\sigma} \xrightarrow{\simeq} \mf Z^r(G)$ for a split reductive group $G$ and $r \in \frac{1}{m}\Z_{\geq 0}$.
\end{thm}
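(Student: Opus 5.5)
The isomorphism will be built by evaluating a central element on the Hecke algebras and inverting it with the Moy--Prasad decomposition. Start from the description of $\mf Z(G)$ as the algebra of essentially compact invariant distributions, or equivalently as $\varprojlim_K Z(\HH(G,K))$ over compact open subgroups $K$. For $z\in\mf Z^r(G)$ and each refined facet $\sigma\in[\bar{\mcC}_m]$, set $z_\sigma := z*\delta_{G_{\sigma,r+}}$. This is $G_{\sigma,r+}$ bi-invariant and compactly supported, because $z$ is essentially compact. It is $G_{\sigma,0}$-conjugation invariant because $z$ is invariant and $G_{\sigma,0}$ normalizes $G_{\sigma,r+}$. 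Hence $z_\sigma\in\M^r_\sigma$. For $\sigma'\preceq\sigma$ we have $G_{\sigma',r+}\subset G_{\sigma,r+}$, so $\delta_{G_{\sigma',r+}}*\delta_{G_{\sigma,r+}}=\delta_{G_{\sigma,r+}}$. This gives $\phi^r_{\sigma',\sigma}(z_{\sigma'})=z_\sigma$. The same kind of computation, using that each extra morphism of \cite[Section 3]{bhattacharya2025description} is convolution with an idempotent or conjugation by an element of $N_G(T)$, shows compatibility with those morphisms as well. We therefore get an algebra map $\mf Z^r(G)\to A^r(G)$, multiplicative because $z$ is central. The map $[A^r]$ of the statement is its inverse.

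The inverse is constructed as follows. Given a compatible family $(f_\sigma)$, extend it $G(F)$-equivariantly to all refined facets of $\B_m$ using that $\bar{\mcC}_m$ is a fundamental domain up to the extra morphisms. Define an element acting on a smooth representation $V$ as follows. On $V^{G_{\sigma,r+}}$ it acts by $f_\sigma$. Compatibility makes these actions agree on overlaps $V^{G_{\sigma,r+}}\subset V^{G_{\sigma',r+}}$. One then uses the Moy--Prasad/Bernstein fact that for $V\in R(G)_{\le r}$ one has $V=\sum_{\sigma}V^{G_{\sigma,r+}}$, together with the depth-$r$ exactness of \cite{MP94,moy1996jacquet}. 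The rational-depth version of this uses refined facets, since every point $x$ has $G_{x,r+}\supseteq G_{\sigma,r+}$ for the refined facet containing it. This exactness shows the actions glue to an endomorphism of $V$. That endomorphism is $G(F)$-equivariant because the family is equivariant, and natural in $V$. On $R(G)_{>r}$ we let it act by $0$. One then checks that the two constructions are mutually inverse. Injectivity of $\mf Z^r\to A^r$ is immediate from the generation statement. Surjectivity follows from the gluing construction.

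The main obstacle is well-definedness of the glued action. One must show that if $v=\sum v_\sigma$ with $v_\sigma\in V^{G_{\sigma,r+}}$ and $\sum v_\sigma=0$, then $\sum f_\sigma v_\sigma=0$. I would handle this via the Schneider--Stuhler type resolution on the refined simplicial complex. The augmented chain complex $\bigoplus_{\dim\sigma=i}V^{G_{\sigma,r+}}\otimes\mathrm{or}(\sigma)$ is exact for $V$ of depth $\le r$; for integral depth this is \cite{bhattacharya2024description}, and the refinement is needed exactly so that this holds at rational $r$. With this exactness, the kernel of $\bigoplus_\sigma V^{G_{\sigma,r+}}\to V$ is generated by the face relations $\sigma'\preceq\sigma$. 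The maps $\phi^r_{\sigma',\sigma}$ are precisely designed to respect these relations. Compatibility with the extra (Weyl-type) morphisms handles the identifications not visible inside the closed chamber.
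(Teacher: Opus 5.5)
Your argument is correct, but it takes a different route from the one the paper relies on. The paper does not prove this statement; it imports it from \cite{bhattacharya2025description}. The explicit map it then uses, recalled in Section~\ref{sec:Positivedepths}, is the Euler--Poincar\'e limit
\[
[A^r]\bigl((f_\sigma)_\sigma\bigr)=\lim_{\Sigma\in\Theta_m}\sum_{\sigma\in\Sigma}(-1)^{\dim\sigma}f_\sigma\,d\mu ,
\]
with $f_\sigma$ extended $G(F)$-equivariantly to $[\B_m]$. In that approach one writes down an essentially compact invariant distribution directly. The work is to show that these alternating sums stabilize against every $h\in\HH(G)$ and invert the restriction map.

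You instead take the restriction map $z\mapsto(z*\delta_{G_{\sigma,r+}})_\sigma$ as primary and build its inverse abstractly, by gluing the operators $\pi(f_\sigma)$ on the subspaces $V^{G_{\sigma,r+}}$. Well-definedness needs only degree-zero exactness of the refined coefficient-system complex. Relations among sums $\sum_x v_x$ are generated along refined edges $\tau=[x_0,x_1]$, and there $f_{x_0}$, $f_{x_1}$ and $f_\tau$ agree on $V^{G_{\tau,r+}}$ by compatibility. Equivariance comes from the extra morphisms, and injectivity, surjectivity and multiplicativity are then essentially formal. On attribution: the exactness you invoke is the Schneider--Stuhler/Meyer--Solleveld resolution for the consistent system $\{\delta_{G_{x,r+}}\}$ on $\B_m$, as used in \cite{bkv2016depthrprojector}, not a result of \cite{bhattacharya2024description}.

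What your route does not give you directly is the alternating-sum formula the rest of the paper depends on. For example, $\xi^r(1_\vartheta)=\lim_\Sigma E^\Sigma_{r,\vartheta}$ is pushed forward term by term in Lemma~\ref{lemma:pushforward} and matched with $1_{\fg^*_{\Sigma,-r,\vartheta}}$ in Proposition~\ref{prop:liealgebraprojlim}. You can recover it quickly. Your inverse satisfies $f_\sigma=z*\delta_{G_{\sigma,r+}}$, so
\[
\sum_{\sigma\in\Sigma}(-1)^{\dim\sigma}f_\sigma=z*e_\Sigma,\qquad e_\Sigma:=\sum_{\sigma\in\Sigma}(-1)^{\dim\sigma}\delta_{G_{\sigma,r+}} .
\]
By \cite{bkv2016depthrprojector}, $e_\Sigma*h$ stabilizes to $P_r*h$, where $P_r$ is the depth-$\leq r$ projector (the unit of $\mf Z^r(G)$). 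Hence your glued element equals the paper's $[A^r]$. This step uses the finer Meyer--Solleveld theory of the idempotents $e_\Sigma$ for convex $\Sigma$, which goes beyond the degree-zero statement your gluing needs.
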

In \cite{chen2025langlands}, Chen-Debacker-Tsai attached depth-$r$ Deligne-Lusztig parameters (denoted $\DL_r$) to depth-$r$ irreducible representations, and these parameters were shown to be in bijection to certain parameters on the Galois side of local Langlands correspondence called depth -$r$ restricted Langlands parameters (denoted $\RP_r$). We assume $p \nmid |W|$ and hence the depth of a representation lies in $\Z_{(p)}\cap \Q_{\geq 0}$. There is a map (\cite{bhattacharya2025description,chen2025langlands}) $\tilde \Theta_r:\Irr(G)_r \xrightarrow{\bar \Theta_r} \DL_r \xrightarrow{\sim }\RP_r$ for $r \in \Z_{(p)}\cap \Q_{\geq 0}$ such that for $r >0$, the image lies in non-trivial parameters (denoted $\DL^\circ_r,\: \RP_r^\circ$ respectively). Thus, there is a map 
\begin{equation}\label{equation:restricted}
    \bar{\Theta}:\Irr(G) \rightarrow{}\DL^t:=\DL_0\coprod \left(\coprod_{r \in \Z_{(p)}\cap \Q_{> 0}}\mathrm{DL}_r^\circ\right) \xrightarrow{\sim} \RP^t:=\RP_0\coprod \left(\coprod_{r \in \Z_{(p)}\cap \Q_{> 0}}\mathrm{RP}_r^\circ\right)
\end{equation}

 Let $\hat T$ denote the complex dual torus of $T$, $\bt$ the Lie algebra of its reductive quotient and $W$ the absolute Weyl group. As per  \cite[Remark 7.9]{bhattacharya2025description}, $\DL_0$ can be identified with the set $(\hat T//W)^{[q]}$ where the superscript $[q]$ denotes the fixed-point set of the map induced by $t\rightarrow t^q$.
 
 We fix the extension $\mathrm{v}: \bar F^{\times} \rightarrow \Q$ of the normalized valuation on $F$, so that $\mathrm v(F^{\times})=\Z$; set $\mathrm{v}(0)=+\infty$. For $m \in \Z_{>0}$ with $p \nmid m$, fix $\nu_m \in F^t$  such that $\mathrm v(\nu_m)=\frac{1}{m}$. Any element of $\DL_r$ for $r \in \frac{1}{m}\Z$ is the equivalence class of a pair $(\nu,\vartheta)$ with $\mathrm{v}(\nu)=\frac{1}{m}$ and $\vartheta \in \bt^*//W(\bar\F_q)$. Fixing such $\nu_m$ coupled with the fact that $G$ is split identifies $\DL_r$ for $r \in \Z_{(p)}\cap \Q_{> 0}$ with $\bt^*//W(\F_q)$ (see \cite[Section 6.2]{bhattacharya2025description}). Using their description of the center, \cite{bhattacharya2025description} constructs the following maps:
\begin{thm}[\cite{bhattacharya2025description}]\label{thm:projectorconstructions}
    There exist algebra homomorphisms
    \begin{equation*}
        \xi^0:\C[(\hat T//W)^{[q]}]\longrightarrow \mf Z^0(G)
    \end{equation*}
    and for $r \in \Z_{(p)}\cap \Q_{>0}$ 
    \begin{equation*}
         \xi^r: \C[\bt^*//W(\F_q)] \longrightarrow \mf Z^r(G)
    \end{equation*}
    which are compatible with the assignment in \eqref{equation:restricted}, i.e., for $\varphi \in \DL^\circ_r$, $\xi^r(1_\varphi)|_\pi =\mathrm{Id} $ if and only if $\bar{\Theta}_r(\pi)=\varphi$ and otherwise it acts by $0$.  
\end{thm}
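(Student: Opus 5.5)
The plan is to build $\xi^r$ through the inverse-limit description of \Cref{thm:Descriptionrationaldepths}. Since $[A^r]$ is an algebra isomorphism, it suffices to produce a compatible family of algebra homomorphisms $\xi^r_\sigma:\C[\bt^*//W(\F_q)]\to\M^r_\sigma$ (resp.\ $\C[(\hat T//W)^{[q]}]\to\M^0_\sigma$), one for each refined facet $\sigma\in[\bar{\mcC}_m]$. These must be compatible with the transition maps $\phi^r_{\sigma',\sigma}$ and with the extra morphisms of the inverse system.

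For $r>0$, I use that $G_{\sigma,r}/G_{\sigma,r+}$ is abelian and, via the Moy--Prasad isomorphism, identified with a lattice quotient $\fg_{\sigma,r}/\fg_{\sigma,r+}$. Using a fixed additive character and the element $\nu_m$, its Pontryagin dual is identified with $\fg^*_{\sigma,-r}/\fg^*_{\sigma,(-r)+}$, which is a graded piece of the dual Lie algebra of the reductive quotient. Composing with the characteristic-polynomial (Chevalley) map to $\bt^*//W(\F_q)$, I assign to each $\vartheta$ the function
\[
e_{\sigma,\vartheta}=\sum_{X}\psi_X\cdot\delta_{G_{\sigma,r+}}\in\M^r_\sigma,
\]
summing over the characters $\psi_X$ of $G_{\sigma,r}/G_{\sigma,r+}$ whose image is $\vartheta$. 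These are orthogonal idempotents summing to $\delta_{G_{\sigma,r}}$, and they are $G_{\sigma,0}$-conjugation invariant because the Chevalley map is. Setting $\xi^r_\sigma(1_\vartheta)=e_{\sigma,\vartheta}$ then gives an algebra homomorphism.

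For $r=0$, I take $\M^0_\sigma$ and the finite reductive quotient $\bar G_\sigma=G_{\sigma,0}/G_{\sigma,0+}$. Deligne--Lusztig theory partitions $\Irr(\bar G_\sigma)$ into geometric series indexed by semisimple classes of the dual group, and these map to $(\hat T//W)^{[q]}$. The corresponding central idempotents of $\C[\bar G_\sigma]$, inflated to $G_{\sigma,0}$, give $\xi^0_\sigma$.

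The compatibility steps are as follows. First I check $\phi^r_{\sigma',\sigma}(e_{\sigma',\vartheta})=e_{\sigma,\vartheta}$. For $r>0$ this reduces to showing that restricting characters from $G_{\sigma',r}/G_{\sigma',r+}$ to the image of $G_{\sigma,r+}$ is compatible with the Chevalley invariant, i.e.\ that cosets $X+\fg^*_{\sigma',(-r)+}$ and $X+\fg^*_{\sigma,(-r)+}$ have the same semisimple part. This is a standard consequence of the hypothesis $p\nmid|W|$, via a Kirillov/Jordan decomposition argument. For $r=0$ it is the compatibility of Harish-Chandra (parabolic) restriction with Deligne--Lusztig series, since $G_{\sigma,0}/G_{\sigma',0+}$ is a parabolic of $\bar G_{\sigma'}$ with Levi $\bar G_\sigma$. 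The extra morphisms are treated similarly: they come from Weyl-group elements of the affine apartment, and the invariants are $W$-stable.

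Finally, I check the spectral statement. A depth-$r$ irreducible $\pi$ has nonzero $G_{\sigma,r+}$-fixed vectors for some $\sigma$, and $[A^r](\xi^r(1_\varphi))$ acts on $\pi^{G_{\sigma,r+}}$ by $e_{\sigma,\varphi}$. By the construction of $\bar\Theta_r$ in \cite{chen2025langlands} through unrefined minimal K-types, these fixed vectors lie in the $\psi_X$-isotypic part for $X$ with invariant $\bar\Theta_r(\pi)$. Hence the element acts by $\mathrm{Id}$ exactly when $\bar\Theta_r(\pi)=\varphi$, and by $0$ otherwise. The main obstacle is showing that $\bar\Theta_r(\pi)$ is independent of the facet: different minimal K-types of $\pi$ must give the same invariant. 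For this I would use associativity of unrefined minimal K-types from Moy--Prasad, together with the facet-compatibility established above.
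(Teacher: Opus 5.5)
Your proposal is essentially the construction the paper recalls from \cite{bhattacharya2025description} and assembles through \Cref{thm:Descriptionrationaldepths}: your $e_{\sigma,\vartheta}$ is exactly $j_{\sigma,r}(1_\vartheta)=\iota_{\sigma,r}\rho_{\sigma,r}\bigl((c^\mu_{\sigma,r})^{-1}t^*_\sigma 1_\vartheta\bigr)$, i.e.\ the normalized sum of the characters of $G_{\sigma,r}/G_{\sigma,r+}$ in the fibre of $J_{\sigma,r}$ over $\vartheta$, and your Deligne--Lusztig series idempotents in depth zero give the description of $\xi^0$ through Deligne--Lusztig parameters of Moy--Prasad types used in Section~\ref{section:depth-zero}. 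Two minor points: with the correct normalization $\mu(G_{\sigma,r})^{-1}\sum_X\psi_X 1_{G_{\sigma,r}}$ these idempotents sum to the unit $\delta_{G_{\sigma,r+}}$ of $\M^r_\sigma$, not to $\delta_{G_{\sigma,r}}$ (which is the single idempotent for $X=0$); and the facet-independence you call the main obstacle needs no associativity argument, because the compatible family defines a central element, which acts on $\pi$ by a single scalar that can be read off from any $X$ occurring in any nonzero $\pi^{G_{\sigma,r+}}$.
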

The restricted Langlands parameters are conjecturally restrictions of semi-simple $L$-parameters attached to representations (\cite{chen2025langlands}). This motivated the following conjecture in \cite{bhattacharya2025description}
\begin{conj}
      For $r \in \Z_{(p)}\cap\Q_{\geq0}$, we have $\mathrm{Im}(\xi^r)\subset \mf Z^{\mathrm{st}}(G)\cap \mf Z^r(G)$.
\end{conj}
As mentioned in \cite{bhattacharya2025description}, to prove the conjecture, it is enough to prove the stability of the projector elements $\xi^r(1_\varphi)$ for $\varphi\in \DL^t$. In this article, we prove several cases of the conjecture by dividing it into two cases, the depth-zero part and the positive depth part. We prove the stability of the elements corresponding to strongly regular parameters (as defined in Section \ref{sec:Notation}) in depth-zero and all parameters in positive depths under some assumptions on the residue characteristic. We prove the following theorem under some assumptions on the residue characteristic:

\begin{thm}
    If $r=0$ and $[s]\in (\hat T//W)^{[q]}=\mathrm{DL}_0$ is strongly regular, then $\xi^0(1_{[s]})\in \mathfrak{Z}^0(G)$ is stable. If $ r \in \Z_{(p)}\cap\mathbb{Q}_{>0}$, then $\xi^r(1_{\vartheta})\in \mathfrak{Z}^r(G)$ is stable for any $\vartheta\in \bt^*//W(\F_q)=\mathrm{DL}_r$. 
\end{thm}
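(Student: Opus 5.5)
We split the argument into the depth-zero and the positive-depth cases, in both cases following the strategy of Bezrukavnikov--Kazhdan--Varshavsky: write the projector as an explicit (Euler--Poincar\'e type) invariant distribution and reduce its stability to statements about stable functions on finite reductive groups or finite Lie algebras.

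\emph{Depth zero.} Via Theorem~\ref{thm:Descriptionrationaldepths} with $m=1$, the element $\xi^0(1_{[s]})$ is the compatible system whose component at a facet $\sigma\in[\bar{\mcC}]$ is the inflation to $G_{\sigma,0}$ of the central idempotent $e_{[s]}^{\sigma}$ of $\C[\bar G_\sigma(\F_q)]$. Here $\bar G_\sigma$ is the reductive quotient of the parahoric, and $e_{[s]}^\sigma$ cuts out the Lusztig series attached to the semisimple class $[s]$. We first rewrite $\nu_z$ as an alternating sum
\begin{equation*}
\nu_{z}=\sum_{\sigma}(-1)^{\dim\sigma}\,\mathrm{inf}\bigl(e^{\sigma}_{[s]}\bigr),
\end{equation*}
taken over $G(F)$-orbits of facets, as in the BKV treatment of the depth-zero projector. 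When $[s]$ is strongly regular, the centralizer of $s$ in the dual group is a torus. The Lusztig series is then the set of constituents of $\pm R_{T_w}^{\bar G_\sigma}(\theta)$ for the various $(T_w,\theta)$ matching $s$, and $e_{[s]}^\sigma$ can be written explicitly as a linear combination of Deligne--Lusztig characters $R_{T_w}^{\bar G_\sigma}(\theta)$. Substituting this into the alternating sum and regrouping by unramified tori $T_w\subset G$, we aim to identify $\nu_z$ with a sum over stable conjugacy classes of unramified elliptic-mod-Levi tori. Each summand should be the Euler--Poincar\'e/inflation formula for a depth-zero supercuspidal or principal-series type character, whose stability reduces, via the character formula (DeBacker--Reeder type) and the Kazhdan--Varshavsky/BKV results, to the fact that $\theta$ is in general position. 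The step we expect to be the main obstacle is checking that the combination over $w\in W$ is invariant under stable conjugacy, i.e.\ that the signs $\epsilon_{\bar G_\sigma}\epsilon_{T_w}$ combine with the Euler--Poincar\'e signs so that only stable conjugacy classes of tori occur. We would first try to verify this via the Steinberg character trick used in BKV: write $e_{[s]}$ as $\mathrm{St}\cdot(\text{function of semisimple part})$ using strong regularity.

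\emph{Positive depth.} Let $r\in\frac1m\Z$ with $r>0$. Here the component of $\xi^r(1_\vartheta)$ at a refined facet $\sigma$ is the function on $G_{\sigma,r}/G_{\sigma,r+}\cong\mf g_{\sigma,r}/\mf g_{\sigma,r+}$ obtained by Fourier transform of the characteristic function of the elements of the dual $\mf g^*_{\sigma,-r}/\mf g^*_{\sigma,-r+}$ whose image in $\bt^*//W$ equals $\vartheta$ (after scaling by $\nu_m$). Under the assumptions on $p$, we use a Moy--Prasad isomorphism $G_{x,r}/G_{x,r+}\cong\mf g_{x,r}/\mf g_{x,r+}$ and an exponential/Cayley map to pass to the Lie algebra, exactly as in the BKV depth-$r$ projector paper. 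There the projector is $\sum_\sigma(-1)^{\dim\sigma}\delta_{G_{\sigma,r+}}$, and its stability reduces to the stability of the closed subset $\mf g^*_{r}$-type sets under the Fourier transform. We follow the same reduction with the indicator of the set $\{X\in\mf g^*(F)_{-r}: \chi(\nu_m^{m r}X)\equiv\vartheta\}$, where $\chi$ is the Chevalley map. This set is a union of fibers of the Chevalley map $\mf g^*\to\mf t^*//W$, so it is invariant under stable conjugacy. Its indicator is therefore a stable function on $\mf g^*(F)$, and by Waldspurger's theorem that the Fourier transform preserves stability (valid for large $p$), the resulting distribution on $\mf g(F)$ is stable. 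Finally, we transport back to $G(F)$ and check that the Euler--Poincar\'e sum over refined facets recovers $\xi^r(1_\vartheta)$; the key input is that the fibers of $\chi$ are compatible with the Moy--Prasad filtrations at every refined facet.
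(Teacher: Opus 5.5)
Your positive-depth argument is essentially the paper's. It uses the dual blob, stability of its indicator, Waldspurger's theorem (in the Kazhdan--Varshavsky form) for the Fourier transform, the facet-wise identity $\mathcal F(\mathcal E_{\sigma,r,\vartheta})=1_{\fg^*_{\sigma,-r,\vartheta}}$ with the BKV alternating-sum identity, transport by a quasi-logarithm, and (implicit in your appeal to BKV) the reduction to $G^{sc}\times Z(G)^\circ$ via a prime-to-$p$ isogeny. Describing the blob through the global Chevalley map is a reasonable shortcut for openness, closedness and stability; the paper instead cites Chen--DeBacker--Tsai for disjointness and unramified base change. You must still check that on each $\fg^*_{\sigma,-r}$ your condition coincides with $J_{\sigma,r}^{-1}(\vartheta)$, and invoke BKV for the closedness and stability of $\fg^*_{-r}$ itself.

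The depth-zero part, however, has a genuine gap. The step you call ``the main obstacle'' is the claim that the alternating facet sum of Deligne--Lusztig idempotents regroups so that only stable conjugacy classes of tori occur. That is the entire content of the statement, and it is not carried out. Your references to ``DeBacker--Reeder type'' formulas and Kazhdan--Varshavsky/BKV results are not tied to any statement that applies to $\nu_z$. The facet contributions are not even invariant distributions, and individual supercuspidal characters need not be stable. DeBacker--Reeder only give stability of packet sums $\mathrm{S}\Theta_{(S,\theta)}$, so you must first write the essentially compact distribution $\nu_z$ as a combination of characters whose coefficients are constant on packets. (Also, $\nu_z$ is a limit over finite convex subcomplexes of the whole building; a sum over $G(F)$-orbits of facets would give a compactly supported Euler--Poincar\'e-type function, not a central element.)

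The paper gets this from the spectral side. It applies the Plancherel formula to $f\mapsto(\xi^0(1_{[s]})*f)(1)$. For non-elliptic $[s]$ it uses Varma's Levi-by-Levi version, van Dijk's formula, and the lemma that constant terms preserve instability. It also identifies $\mathcal R_{[s]}$ exactly. Every member is a regular depth-zero supercuspidal $\pi(S,\theta)$ at a vertex with $\bar\theta$ in general position, or an irreducible induction $i_P^G(\tau)$ of such a $\tau$ from a Levi (irreducible because $W_\tau$ is trivial). Stable conjugacy of $(S,\theta)$ preserves the Deligne--Lusztig parameter. Hence $\mathcal R_{[s]}$ is a union of full DeBacker--Reeder packets, on which formal degree, central character and Plancherel measure are constant. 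This closure under stable conjugacy is exactly what your regrouping needs, and nothing in your argument supplies it.

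Finally, the Steinberg trick cannot work as you state it. $\mathrm{St}$ vanishes off semisimple elements of $\bar G_\sigma(\F_q)$. At a hyperspecial vertex the series of $[s]$ is the single irreducible character $\pm R_T(\theta)$, so $e^\sigma_{[s]}$ is nonzero on regular unipotent elements, where $Q_T=1$. Hence $e^\sigma_{[s]}$ is not $\mathrm{St}$ times a function of the semisimple part.
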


\begin{rmk}
    For $ r \in \Z_{(p)}\cap\mathbb{Q}_{>0}$, if $\vartheta = \bar 0^r\in \bt^*//W(\F_q)=\mathrm{DL}_r$, the element $\xi^r(1_{\bar 0^r})$ is exactly the projector to $R(G)_{<r}$ as noted in \cite[Proposition 8.7]{bhattacharya2025description} which is known to be stable from \cite{bkv2016depthrprojector}. Our methods work in this case as well.
\end{rmk}

\subsection{The depth-zero argument}

 For any depth-zero irreducible representation $\pi$ of $G(F)$, we choose a Moy--Prasad type $(x,\mathcal{X})$ of $\pi$, where $x\in \mathcal{B}(G,F)$ and $\mathcal{X}$ is an irreducible cuspidal representation of $G_{x,0}/G_{x,0+}$. Let $W_x$ denote the finite Weyl group corresponding to the facet containing $x$. We can map $\mathcal{X}$ to $(\hat{T}//W_x)^{[q]}$, and then map to $(\hat{T}//W)^{[q]}=\mathrm{DL}_0$. Denote the image of $\mathcal{X}$ in $\mathrm{DL}_0$ by $\theta_{x,\mathcal{X}}$. By \cite[Proposition 7.11]{bhattacharya2025description}, $\theta_{x,\mathcal{X}}$ does not depend on the choice of the Moy--Prasad type, and therefore we can simply denote the image by $\bar{\Theta}_0(\pi)$. Then for any $[s]\in \mathrm{DL}_0$, the definition of $\xi^0(1_{[s]})$ is:
$$\xi^0(1_{[s]})(\pi) = \begin{cases} 1, & \text{if depth}(\pi)=0 \text{ and } \bar{\Theta}_0(\pi)=[s], \\ 0, & \text{otherwise }. \end{cases} $$
To prove the depth-zero cases of the theorem, we study the representations selected by the projector:
$$R_{[s]}=\{\pi\in\operatorname{Irr}(G)_0:\bar{\Theta}_0(\pi)=[s]\}.$$
We first consider the elliptic case (as defined in Section \ref{sec:Notation}). Strong regularity and ellipticity force every representation in $R_{[s]}$ to be an extra regular depth-zero supercuspidal representation. Its Moy-Prasad type is obtained from a cuspidal Deligne–Lusztig representation of a finite reductive quotient, associated with an elliptic torus and a character in general position. This allows us to describe the representations using the regular depth-zero construction of \cite{kaletha2019regular}.

The next step is to show that $R_{[s]}$ is a disjoint union of DeBacker-Reeder $L$-packets. The essential compatibility is that stable conjugacy of the torus-character pairs defining these representations preserves their Deligne–Lusztig parameter. Thus, whenever one member of a packet belongs to $R_{[s]}$, the entire packet belongs to $R_{[s]}$.

We then apply the Plancherel formula to the distribution
$$f\longmapsto \bigl(\xi^0(1_{[s]})*f\bigr)(1).$$
Since the selected spectrum is supercuspidal, its Plancherel expansion is expressed in terms of formal degrees and supercuspidal characters, integrated over unitary central characters. The representations in each DeBacker--Reeder packet have the same central character and formal degree. We can therefore group the expansion into scalar multiples of sums of packet characters. These sums are stable by the stability theorem in \cite{debacker2009depth}, so the distribution vanishes on unstable test functions.

For a strongly regular non-elliptic parameter, the selected representations arise by parabolic induction from proper Levi subgroups. We show that every $\pi\in R_{[s]}$ is an irreducible normalized induction of a depth-zero extra regular supercuspidal representation $\sigma$ of a Levi subgroup $L$ whose Deligne--Lusztig parameter is strongly regular elliptic for $L$, and its image in $\mathrm{DL}_0$ is $[s]$.

The proof of stability again uses the Plancherel formula, now organized by Levi subgroups as in \cite{varma2024some}. After regrouping the characters into packets, each Levi contribution vanishes by the elliptic argument. This proves the theorem without an ellipticity assumption on $[s]$.

\subsection{The positive depth argument}
 The proof in positive depths is uniform under certain assumptions on the residue characteristic. In this case, $\DL_r$ is identified with $\bt^*//W(\F_q)$. Let $\fg$ denote the Lie algebra of $G$. The idea is motivated by the construction of dual blobs, for example in \cite{kimspicevarma,kim2004dual}, and the use of Fourier transform to prove stability as in \cite{bkv2016depthrprojector}. Given $\vartheta \in \bt^*//W(\F_q) =\DL_r$ for $r \in \Z_{(p)}\cap \Q_{>0}$, let $E_{r,\vartheta}:=\xi^r(1_{\vartheta})$ denote the projector element. We define a Lie algebra analogue of the projector $\mc E_{r,\vartheta}$ and construct a open, closed and stable subset (the ``dual blob") $\fg^*_{r,\vartheta} \subset \fg^*$ such that the Fourier transform of $1_{\fg^*_{r,\vartheta}}$ is equal to $\mc E_{r,\vartheta}$. A theorem of Waldspurger \cite{waldspurger1997lemme} shows that the Fourier transform preserves stability, hence proving the stability of $\mc E_{r,\vartheta}$.

In order to get the stability of $E_{r,\vartheta}$ on the group, we make use of the existence of quasi-logarithms and their compatibility with Moy-Prasad isomorphism. This coupled with the construction of $E_{r,\vartheta}$ lets us prove its stability whenever such quasi-logarithms and compatibility conditions are known. Our additional restrictions on the group or the residue characteristic come exactly from this.

\subsection{Organization of the article}
We briefly summarise the main goals of each section. In Section \ref{sec:Notation}, we establish the notation and conventions. Section \ref{section:depth-zero} deals with the case of depth-zero elements, starting with the strongly regular elliptic case and finally proving the result for all strongly regular elements. Section \ref{sec:Positivedepths} deals with the case of positive depth elements.

\paragraph{Acknowledgements} The authors thank Charlotte Chan, Tsao-Hsien Chen, Thomas Haines, Tasho Kaletha, Cheng-Chiang Tsai and Sandeep Varma for many useful discussions and answering the multitude of questions that were asked to them at several points. The authors first met and discussed the project during a summer workshop on Character Sheaves on Loop Groups at the University of Minnesota in the summer of 2026, and would like to thank the organizers of the workshop.

\paragraph{Declaration of AI usage} Large language models were only used at the end for proofreading and minor language modifications. The authors would like to declare that there was no use of AI for ideas and results.

\section{Notation and conventions}\label{sec:Notation}

Throughout the article, we assume that $G$ is a split reductive group over a $p$-adic field $F$. Denote the center of $G$ by $Z$ and the absolute Weyl group by $W$. We assume that $p$ does not divide $|W|$. We have some further assumptions on $p$ for the depth-zero case and the positive depth case separately, which will be stated in the corresponding sections.

Let $\mathrm{DL}_0:=(\hat{T}// W)^{[q]}$ denote the space of depth-zero Deligne--Lusztig parameters. We fix an isomorphism
$$\iota:\bar{k}^{\times}\rightarrow (\mathbb{Q}/\mathbb{Z})_{p'},$$
where $(\mathbb{Q}/\mathbb{Z})_{p'}$ denotes the subgroup of elements in $\mathbb{Q}/\mathbb{Z}$ of order prime to $p$. 

For an element $[s]\in \mathrm{DL}_0$ represented by $s\in \hat{T}$, we call it strongly regular elliptic if it satisfies the following conditions:
\begin{itemize}
    \item (strongly regular) The centralizer of $s$ in $\hat{G}$ is $\hat{T}$. In other words, $\{w\in W\mid w(s)=s\}=\{1\}$.
    \item (elliptic) Suppose $s^q=w_{[s]}(s)$. Then we have $(X_*(T_{\mathrm{ad}})\otimes \mathbb{R})^{w_{[s]}}=0$.
\end{itemize}

While working with rational depths, we subdivide the facets in the building $\B(G,F)$ following \cite[Section 2.4]{bhattacharya2025description}, and let $\B_m$ denote the building with refined simplicial structure as mentioned in the introduction. Given $x \in \B(G,F)$, let $\fg_{x,r}:=\fg(F)_{x,r}$ denote the Moy-Prasad filtration subgroup of the Lie algebra, and let $\fg^*_{x,-r}=\fg^*(F)_{x,-r}$ be the same for the dual of the Lie algebra, defined following \cite{MP94}. Similarly, let $\fg_{x,r+}:=\cup_{s>r}\fg_{x,s}$ and $\fg^*_{x,-r+}=\cup_{s<r}\fg^*_{x,-s}$. For $m \in \Z_{>0}$, let $[\B_m]$ denote the set of refined facets in $\B_m$. Note that for $r \in \frac{1}{m}\Z_{\geq 0}$ and $\sigma \in [\B_m]$, $\fg_{\sigma,r}$, $\fg^*_{\sigma ,-r}$ etc. are well defined. For $r>0$ let $\bg_{x,r}:=\frac{\fg_{x,r}}{\fg_{x,r+}}$ and $\bg^*_{x,r}:=\frac{\fg^*_{x,-r}}{\fg^*_{x,-r+}}$. These $\F_q$-vector spaces are dual to each other and $\bg_{\sigma,r}$ and $\bg^*_{\sigma,r}$ can be defined as above in the appropriate setting. 

Let $\G_x$ denote the reductive quotient at $x \in \B(G)$, i.e., $\G_x(\F_q)=G_{x,0}/G_{x,0+}$ and $\T$ denote the reductive quotient of $T$. We denote by $\ft$ the Lie algebra of $ T$ and $\bt=\ft_0/\ft_{0+}$ is the Lie algebra of $\T$. 

\paragraph{}   A subset $\Sigma \subset [\B_m]$ is a subcomplex if $|\Sigma|=\cup_{\sigma \in \Sigma}\sigma \subset \B(G,F)$ is closed. It is convex if $|\Sigma|$ is convex. Let $\Theta_m$ denote the set of convex subcomplexes of $\B_m$.  A facet of dimension $0$ is called a vertex, and we denote the set of vertices in $\B_m$ by $V(\B_m)$. A facet of maximal dimension is called a chamber. 

\section{The depth-zero case}\label{section:depth-zero}

Throughout this section, we further assume that
$$p>(e_F+2)n,\qquad q>|\Phi^+|$$
where $e_F$ is the absolute ramification index of $F/\mathbb{Q}_p$, $n$ is the dimension of a faithful algebraic representation $G\hookrightarrow \GL_n$ and $|\Phi^+|$ denotes the number of positive roots of $G$. The extra assumptions come from \cite[Restriction 12.4.1]{debacker2009depth} and \cite[\S 2.1]{kaletha2019regular}.

\subsection{Strongly regular elliptic case}
In this subsection, we will recall the definition and properties of $\xi^0(1_{[s]})$ when $[s]\in \mathrm{DL}_0$ is strongly regular elliptic, and proves the following theorem:
\begin{thm}\label{3.1}
    Suppose $[s]\in \mathrm{DL}_0$ is strongly regular elliptic. Then $\xi^0(1_{[s]})\in \mathfrak{Z}^0(G)$ is stable.
\end{thm}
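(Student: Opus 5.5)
The plan follows the outline sketched in the introduction, in three steps: identify the representations in $R_{[s]}$, show that $R_{[s]}$ is a union of DeBacker--Reeder packets, and then expand the distribution $f\mapsto(\xi^0(1_{[s]})*f)(1)$ via the Plancherel formula.

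\textbf{Step 1: identify $R_{[s]}$.} Let $\pi\in R_{[s]}$ with Moy--Prasad type $(x,\mathcal{X})$. By Deligne--Lusztig theory, $\mathcal{X}$ is a constituent of some $\pm R_{\mathbb{S},\theta}$, where $\mathbb{S}\subset\G_x$ is an $\F_q$-torus and $\theta$ is a character of $\mathbb{S}(\F_q)$. Under the fixed isomorphism $\iota$, the pair $(\mathbb{S},\theta)$ corresponds to a semisimple class in the dual group, whose image in $(\hat T//W)^{[q]}$ is $[s]$.

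\begin{itemize}
\item Strong regularity of $s$ means its stabilizer in $W$, and hence in $W_x$, is trivial. So $\theta$ is in general position and $\pm R_{\mathbb{S},\theta}$ is irreducible.
\item Cuspidality of $\mathcal{X}$ forces $\mathbb{S}$ to be elliptic in $\G_x$. The twisting element $w_{[s]}$ in $s^q=w_{[s]}(s)$ is determined by $[s]$ and gives the type of $\mathbb{S}$. The ellipticity hypothesis $(X_*(T_{\mathrm{ad}})\otimes\R)^{w_{[s]}}=0$ then forces $x$ to be a vertex. It also forces $\mathbb{S}$ to lift to an unramified elliptic maximal torus $S\subset G$ with $x$ as the unique fixed point of $S$ in $\mathcal B(G,F)$ (modulo center).
\end{itemize}

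Hence $\pi=\mathrm{c\text{-}Ind}_{Z(F)G_{x,0}}^{G(F)}\tilde{\mathcal{X}}$ is a depth-zero regular supercuspidal. In fact it is the depth-zero supercuspidal attached by DeBacker--Reeder (equivalently Kaletha's regular construction) to a pair $(S,\theta)$ with $\theta$ a depth-zero regular character of $S(F)$. Here I would invoke \cite{kaletha2019regular} to match the two constructions. Conversely, every such $(S,\theta)$ whose reduction has parameter $[s]$ gives an element of $R_{[s]}$. One technical point is to track the extension of $\theta$ from $S(F)_0$ to $S(F)$, i.e.\ the central character. This is a continuous family: twisting by unramified characters of $Z(F)$ does not change $[s]$.

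\textbf{Step 2: $R_{[s]}$ is a union of DeBacker--Reeder packets.} A DeBacker--Reeder packet consists of the representations attached to pairs $(S',\theta')$ that are stably conjugate to $(S,\theta)$. Stable conjugacy is implemented by $g\in G(\bar F)$, and in fact in $G(F^{nr})$ for unramified tori, with $\Ad(g)$ defined over $F$. I would show that the reduction of $(S',\theta')$ to the reductive quotient at its vertex is the image of $(\mathbb{S},\theta)$ under an element of $W$. Hence both reductions have the same image in $\hat T//W$. The cleanest route is via the $\hat T$-side description of $\theta$ through the Langlands correspondence for tori: the restriction of $\theta$ to inertia gives $s$ up to $W$, and this is manifestly a stable-conjugacy invariant. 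So $R_{[s]}=\coprod_\Pi\Pi$, a disjoint union of full packets, organized into finitely many families up to unramified twist.

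\textbf{Step 3: Plancherel expansion and stability.} The distribution $f\mapsto(\xi^0(1_{[s]})*f)(1)$ is the restriction of the Plancherel measure to the summand $R_{[s]}$. Since $R_{[s]}$ is supercuspidal, this gives
\[
(\xi^0(1_{[s]})*f)(1)=\sum_{\Pi}\int_{\chi}d(\pi_{\Pi,\chi})\sum_{\pi\in\Pi\otimes\chi}\Theta_\pi(f)\,d\chi .
\]
Here $\Pi$ runs over a finite set of packet orbits, and $\chi$ runs over the unitary unramified characters (of $G(F)$, via the central character). Within a packet, the formal degrees agree: they are all given by $\dim\mathcal X/\mathrm{vol}(Z(F)G_{x,0})$, and stably conjugate data yield isomorphic finite reductive quotients of equal volume. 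The central characters also agree. Hence each inner sum is a scalar multiple of $S\Theta_\Pi=\sum_{\pi\in\Pi}\Theta_\pi$. The sign normalization is uniform, since $\pm R_{\mathbb{S},\theta}$ has a sign depending only on $\F_q$-ranks, which are constant on the packet. By \cite{debacker2009depth}, $S\Theta_\Pi$ is stable, and this is where the restrictions on $p$ enter. So the integrand vanishes on unstable $f$, and hence $\xi^0(1_{[s]})$ is stable.

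I expect Step 2 to be the main obstacle. Matching the depth-zero Deligne--Lusztig parameter of \cite{bhattacharya2025description}, defined through $\iota$ and $W_x$, with the torus parameter used by DeBacker--Reeder, with consistent normalizations, is subtle, especially in separating conjugacy over $F$ from conjugacy over $F^{nr}$. A lesser issue is justifying the interchange of the Plancherel integral with the packet sums and the finiteness of the set of orbits.
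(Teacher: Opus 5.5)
Your three steps follow the paper's proof. Step 1 realizes $\mathcal R_{[s]}$ as regular depth-zero supercuspidals attached to elliptic pairs $(\bar S,\bar\theta)$ at vertices. Step 2 is the invariance of the parameter under stable conjugacy; your torus-LLC phrasing is the same computation as the paper's transport of $\bar\theta\circ N_{k_E/k}$ to $\bar T(k_E)$ via $i$ and $c\circ i$. Step 3 is the Plancherel expansion regrouped into stable packet sums.

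\textbf{Formal degrees.} The reason you give for equality of formal degrees in a packet is false. Members of one DeBacker--Reeder packet are in general compactly induced from non-conjugate vertices with non-isomorphic reductive quotients. Take $G=Sp_4$, $q\ge 5$, and $s=(a,b)$ with $a^{q+1}=b^{q+1}=1$, $a,b\neq\pm1$, $a\neq b^{\pm1}$. Then $w_{[s]}=-1$ and $[s]$ is strongly regular elliptic. Here $\mathcal R_{[s]}$ is a single packet with four members. One sits at each of the two non-conjugate hyperspecial vertices, where $\mathbb G_x=Sp_4$, and two sit at the third vertex of the alcove, where $\mathbb G_x=SL_2\times SL_2$.

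The equality itself is true, and the paper cites DeBacker--Reeder \S5.3 for it. You can also check it directly. We have $\dim\mathcal X=|\mathbb G_x(k)|_{p'}/|\bar S(k)|$ and $\mathrm{vol}(G_{x,0})=|\mathbb G_x(k)|\,q^{-N_x}\,\mathrm{vol}(I^+)$. Here $N_x$ is the number of positive roots of $\mathbb G_x$ and $I^+$ is the pro-unipotent radical of an Iwahori subgroup. The dependence on $x$ cancels, so $d(\pi(S,\theta))$ equals a constant depending only on $G$ and the Haar measures, divided by $|\bar S(k)|=|S(F)_0/S(F)_{0+}|$. That cardinality is a stable invariant.

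\textbf{Regularity.} General position of $\bar\theta$ with respect to $W_x$ is not enough to write $\pi=\mathrm{c\text{-}Ind}_{Z(F)G_{x,0}}^{G(F)}\tilde{\mathcal X}$ and to place $\pi$ in Kaletha's and DeBacker--Reeder's framework. You need $S(F)=Z(F)S(F)_0$. You also need $\bar\theta$ to have trivial stabilizer in the full rational Weyl group $N_G(S)(F)/S(F)$, because the stabilizer of a vertex in $G(F)$ can be strictly larger than $Z(F)G_{x,0}$. Your hypothesis that $s$ has trivial stabilizer in all of $W$ gives this. Transport $\mathrm{Ad}(n)$, for $n\in N_G(S)(F)$ fixing $\bar\theta$, to an element of $W$ fixing a representative of $[s]$; this is exactly the paper's proof that $\pi$ is extra regular. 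You should make this step explicit.
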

\subsubsection{Definition and properties of $\xi^0(1_{[s]})$}

Let $[s]\in \mathrm{DL}_0$ be strongly regular elliptic and let $1_{[s]}$ denote the characteristic function of $[s]$ on $\mathrm{DL}_0$. Following \cite[\S 7]{bhattacharya2025description}, the spectral description of $\xi^0(1_{[s]})\in \mathfrak{Z}^0(G)$ could be understood as follows: let $(\pi,V)$ be an irreducible representation of $G(F)$. If $\pi$ is not of depth-zero, then $\xi^0(1_{[s]})$ acts on $\pi$ by zero. If $\pi$ is of depth-zero, denote its Moy--Prasad type by $(x,\mathcal{X})$, where $x\in \mathcal{B}(G,F)$ is a point in the reduced Bruhat--Tits building, and $\mathcal{X}$ is an irreducible cuspidal representation on the finite quotient $\mathbb{G}_x(k):=G_{x,0}/G_{x,0+}$. Denote the corresponding Weyl group of $\mathbb{G}_x(k)$ by $W_x$. As stated in \cite[\S 7.4]{bhattacharya2025description}, we can attach to $\mathcal{X}$ an element in $(\hat{T}//W_x)^{[q]}$. Let $\theta_{x,\mathcal{X}}$ denote the image of the corresponding element under the map $(\hat{T}//W_x)^{[q]}\rightarrow (\hat{T}//W)^{[q]}=\mathrm{DL}_0$. By \cite[\S 7.4]{bhattacharya2025description}, we have 
$$\xi^0(1_{[s]})|_{\pi}=1_{[s]}(\bar{\Theta}_0(\pi))\mathrm{Id}_V=1_{[s]}(\theta_{x,\mathcal{X}})\mathrm{Id}_V.$$
In other words, $\xi^0(1_{[s]})$ acts on $V$ as the scalar $1_{[s]}(\bar{\Theta}_0(\pi))=1_{[s]}(\theta_{x,\mathcal{X}})$. Define
$$\mathcal{R}_{[s]}:=\{\pi\in \mathrm{Irr}(G)_0\mid \bar{\Theta}_0(\pi)=\theta_{x,\mathcal{X}}=[s]\}.$$
We have the following main proposition for this section:
\begin{prop}\label{regular}
    If $[s]\in \mathrm{DL}_0$ is strongly regular elliptic, then every $\pi\in\mathcal{R}_{[s]}$ is an extra regular depth-zero supercuspidal representation of $G$. More precisely, it contains a Moy--Prasad type at a vertex $x\in \mathcal{B}(G,F)$ of the form
    $$\mathcal{X}=\epsilon_{\mathbb{G}_x(k),\bar{S}} R_{\bar{S}}^{\mathbb{G}_x(k)}(\bar{\theta}),$$
    where $\bar{S}\subset \mathbb{G}_x(k)$ is a maximal elliptic $k$-torus of $\mathbb{G}_x$. In addition, suppose that $S$ is a lift of $\bar{S}$ (see \cite[Lemma 3.4.4(2)]{kaletha2019regular}). Then the stabilizer of $\bar{\theta}$ in $\Omega(S,G)(F)$ is trivial.
\end{prop}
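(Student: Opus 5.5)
The plan is to start from the Moy--Prasad type $(x,\mathcal{X})$ of $\pi\in\mathcal{R}_{[s]}$ and use the Deligne--Lusztig classification of the cuspidal representation $\mathcal{X}$ of $\mathbb{G}_x(k)$. We first place $\mathcal{X}$ in its Lusztig series. By the construction in \cite[\S 7.4]{bhattacharya2025description}, the element of $(\hat T//W_x)^{[q]}$ attached to $\mathcal{X}$ is the geometric conjugacy class of a semisimple element $s_x$ of the dual group $\mathbb{G}_x^\vee(k)$. This element is represented by some $s\in\hat T$ with $s^q=w(s)$ for some $w\in W_x$. Its image in $(\hat T//W)^{[q]}$ is $[s]$.

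Since $[s]$ is strongly regular, the stabilizer of $s$ in $W$ is trivial, so a fortiori its stabilizer in $W_x\subset W$ is trivial. Hence $s_x$ is regular semisimple in $\mathbb{G}_x^\vee$ with centralizer a maximal torus. Moreover, the element $w$ with $s^q=w(s)$ is uniquely determined, so $w=w_{[s]}$. By Deligne--Lusztig, the Lusztig series $\mathcal{E}(\mathbb{G}_x(k),s_x)$ then consists of a single irreducible character, namely $\pm R_{\bar S}^{\mathbb{G}_x(k)}(\bar\theta)$. Here $\bar S$ is the $k$-torus of type $w_{[s]}$ dual to the centralizer of $s_x$, and $\bar\theta$ corresponds to $s_x$ via $\iota$. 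The character $\bar\theta$ is in general position, since its $W_x$-stabilizer $\{v\in W_x : v(s)=s\}$ is trivial. The sign is $\epsilon_{\mathbb{G}_x(k),\bar S}$.

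The next step is to show that $\bar S$ is elliptic and that $x$ is a vertex. The representation $\mathcal{X}$ is cuspidal, and $R_{\bar S}(\bar\theta)$ with $\bar\theta$ in general position is cuspidal precisely when $\bar S$ is elliptic in $\mathbb{G}_x$ modulo the center. The stronger ellipticity assumption $(X_*(T_{\mathrm{ad}})\otimes\R)^{w_{[s]}}=0$ says that $\bar S$ is anisotropic modulo $Z$ already in $G$, not merely in $\mathbb{G}_x$. If $x$ were not a vertex, $\mathbb{G}_x$ would be a Levi subgroup containing a central split torus of positive rank (modulo $Z$). That torus would lie in $\bar S$, and its cocharacters would give nonzero $w_{[s]}$-fixed vectors, a contradiction. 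So $x$ is a vertex, and $\bar S$ lifts, by \cite[Lemma 3.4.4(2)]{kaletha2019regular}, to a maximally unramified elliptic maximal torus $S$ of $G$ with $x$ its unique fixed point in the building.

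It remains to show that the stabilizer of $\bar\theta$ in $\Omega(S,G)(F)$ is trivial. Then $(S,\theta)$ is an extra regular depth-zero pair in Kaletha's sense, and $\pi=\mathrm{c\text{-}Ind}_{G_x}^{G}\kappa$ is extra regular supercuspidal by \cite{kaletha2019regular}. Here $G_x$ is the stabilizer of $x$, and $\kappa$ extends the inflation of $\mathcal{X}$.

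We expect this last step, controlling the full rational Weyl group $\Omega(S,G)(F)$ rather than just $W_x$, to be the main difficulty. First we identify $\Omega(S,G)(F)$, as $S$ splits over $F^{nr}$, with the centralizer of $w_{[s]}\,\mathrm{Frob}$ in $W$, viewed inside $\Omega(S,G)(F^{nr})\cong W$. Next, via $\iota$ and the duality, we transport the action on $\bar\theta$ to the action on $s$. An element $v$ fixing $\bar\theta$ then satisfies $v(s)=s$. Strong regularity of $[s]$ in $W$ (not just in $W_x$) then forces $v=1$. The care needed is in checking that the identification of depth-zero characters of $S(F)$ with elements of $\hat T$ in \cite[\S 7]{bhattacharya2025description} is compatible with the $W$-action. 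A secondary point is that a stabilizer element need only fix $\bar\theta$ rather than $\theta$; this is handled because the extra-regularity condition is formulated for $\bar\theta$ itself.
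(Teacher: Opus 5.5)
Your proposal is correct and follows essentially the paper's route: identify $\mathcal{X}$ through its Deligne--Lusztig parameter, use strong regularity to get $\bar\theta$ in general position so that $\mathcal{X}=\epsilon_{\mathbb{G}_x(k),\bar S}R_{\bar S}^{\mathbb{G}_x(k)}(\bar\theta)$, use cuspidality and the ellipticity of $w_{[s]}\in W_x$ to force $\bar S$ elliptic and $x$ a vertex, and transport an element of $\Omega(S,G)(F)$ fixing $\bar\theta$ to an element of $W$ fixing $s$, which strong regularity in $W$ kills. Your description of $\Omega(S,G)(F)$ as $C_W(w_{[s]})$ replaces the paper's norm argument over $k_E$, and your uniqueness argument for $w=w_{[s]}\in W_x$ makes the paper's ``after conjugation'' explicit; the only loose end is your closing remark, where supercuspidality of $\pi$ should be taken from \cite[Proposition 6.6]{moy1996jacquet} and the compact induction is from the stabilizer of $\mathcal{X}$ (by regularity, $S(F)G_{x,0}$), not from the full stabilizer of $x$.
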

\begin{proof}
    Choose a Moy--Prasad type $(x,\mathcal{X})$ in $\pi$. Without loss of generality, we may assume $x\in \mathcal{A}(G,T,F)$. In particular, $\mathcal{X}$ is irreducible cuspidal. Denote the facet containing $x$ by $\sigma$. Let $W_x$ denote the Weyl group of the finite reductive group $\mathbb{G}_x(k)$. Since $\pi\in \mathcal{R}_{[s]}$, after conjugation, we can assume that $w_{[s]}\in W_x$. Then since $(X_*(T_{\mathrm{ad}})\otimes \mathbb{R})^{w_{[s]}}=0$, we can see that $\sigma$ cannot be positive dimensional. Therefore, $x$ must be a vertex. By \cite[Proposition 6.6]{moy1996jacquet}, $\pi$ must be an irreducible supercuspidal representation. We denote the Deligne--Lusztig parameter of $\mathcal{X}$ in $(\hat{T}//W_x)^{[q]}$ by $[s]_x$, which is a preimage of $[s]$.

    By \cite[Corollary 7.7]{deligne1976representations}, there exist $\bar{S}\subset \mathbb{G}_x(k)$ and a character $\bar{\theta}$ on $\bar{S}$ such that the character of $\mathcal{X}$ has non-zero inner product with a DL virtual character whose corresponding Deligne--Lusztig parameter is $[s]_x$. Since $[s]$ is strongly regular elliptic, $[s]_x$ is also strongly regular elliptic. Since the finite torus duality is Weyl group equivariant, the strong regularity of $[s]_x$ implies that $\bar{\theta}$ has trivial Weyl stabilizer, i.e. $\bar{\theta}$ is in general position. Therefore, by \cite[Proposition 7.4]{deligne1976representations}, the signed Deligne--Lusztig character is an actual irreducible representation of $\mathbb{G}_x(k)$, which forces it to be the same as $\mathcal{X}$.
    
    By \cite[Proposition 8.2]{deligne1976representations}, if $\bar{S}$ is non-elliptic, then $\bar{S}$ must lie in a proper Levi, and the corresponding signed DL character can be written as the induced representation from a proper Levi of $\mathbb{G}_x(k)$, a contradiction to the cuspidality assumption. Therefore, $\bar{S}$ must be elliptic.

    To prove $\pi$ is extra regular, we choose a lift $S\subset G$ of $\bar{S}$ following \cite[Lemma 3.4.4(2)]{kaletha2019regular}. Then $S$ is a maximal unramified elliptic torus of $G$. The pullback of $\bar{\theta}$ gives a character $\theta_0$ on $S(F)_0$. Let $E/F$ be an unramified field extension such that $S$ splits over $E$ and denote the residue field of $E$ by $k_E$. We need to prove that, for any $g\in N_G(S)(E)$, such that $\alpha:=\mathrm{Ad}(g):S(E)\rightarrow S(E)$ is defined over $F$ and stabilizes $\theta_0$, we must have $g\in S(E)$.

    Let $\bar{\alpha}:\bar{S}_{k_E}\rightarrow \bar{S}_{k_E}$ be the reduction of $\alpha$, which is defined over $k$. Over $E$, choose a conjugation action $i:T_E\xrightarrow{\sim} S_E$, and the reduction gives $\bar{i}:\bar{T}_{k_E}\xrightarrow{\sim}\bar{S}_{k_E}$.

    Transport the automorphism $\alpha$ from $S_E$ to $T_E$:
    $$w_{\alpha}:=i^{-1}\circ\alpha\circ i\in W.$$
    Reduction to $k_E$ also gives us the same Weyl group element. Define a character $\psi$ on $\bar{T}(k_E)$ by
    $$\psi(t)=\bar{\theta}\circ N_{k_E/k}\circ \bar{i}(t).$$
    Since $\alpha$ stabilizes $\theta_0$ and $\bar{\alpha}$ is defined over $k$, we can see that $w_{\alpha}$ stabilizes $\psi$. 

    By \cite[\S 5.3]{deligne1976representations}, $\bar{\theta}$ and $\bar{\theta}\circ N_{k_E/k}$ correspond to the same point $\hat{s}\in \hat{S}$ by our isomorphism $\iota$. Let $\hat{i}:\hat{S}\rightarrow \hat{T}$ be the map induced by $i$, and assume $\hat{t}\in \hat{T}$ is the point corresponding to $\psi$ by $\iota$. Then $\hat{t}$ belongs to $[s]$. The strong regularity of $[s]$ implies that $w_{\alpha}=1$. Therefore, $\alpha$ is the identity map, and $g\in S(E)$. The proof is complete.
\end{proof}

\subsubsection{Regular supercuspidals representation and $L$-packets}

In this part, we further discuss the representations appearing in $\mathcal{R}_{[s]}$.

Let $S\subset G$ denote a maximal unramified elliptic torus. By \cite[Lemma 3.4.3, 3.4.4]{kaletha2019regular}, $S$ gives us a vertex $x_S\in \mathcal{B}(G,F)$ and $S$ reduces to a maximal elliptic torus $\bar{S}\subset \mathbb{G}_{x_S}(k)$. Moreover, every maximal elliptic torus of $\mathbb{G}_{x}(k)$ arises in this way, where $x\in \mathcal{B}(G,F)$ is a vertex.

For a regular depth-zero character $\theta$ on $S(F)$, the restriction of $\theta$ to $S(F)_0$ induces a character $\bar{\theta}$ on $\bar{S}$. Put
$$\kappa_{S,\theta}^0=\epsilon_{\mathbb{G}_{x_S}(k),\bar{S}} R_{\bar{S}}^{\mathbb{G}_{x_S}(k)}(\bar{\theta}), \qquad J_S=S(F)G_{x,0}.$$
Inflate $\kappa_{S,\theta}^0$ to $G_{x,0}$ and take Kaletha's extension $\tilde{\kappa}_{S,\theta}$ to $J_S$ (see \cite[\S 3.4.4]{kaletha2019regular}). By \cite[Lemma 3.4.20]{kaletha2019regular},
$$\pi(S,\theta):=\mathrm{c}\text{-}\mathrm{Ind}_{J_S}^{G(F)}\tilde{\kappa}_{S,\theta}$$
is irreducible supercuspidal. The functions supported on $J_S$ in the compact induction give a copy of the inducing representation. Restricting to $G_{x,0}$ gives
$$\mathrm{Hom}_{G_{x,0}}(\tilde{\kappa}_{S,\theta}|_{G_{x,0}},\pi(S,\theta))\neq 0.$$
Therefore, the DL parameter corresponding to $\pi(S,\theta)$ is only determined by $(\bar{S},\bar{\theta})$. Furthermore, by \cite[Proposition 3.4.27]{kaletha2019regular}, every regular depth-zero supercuspidal representation is of the form $\pi(S,\theta)$ for some pair $(S,\theta)$, and $\pi(S_1,\theta_1)\simeq \pi(S_2,\theta_2)$ if and only if $(S_1,\theta_1)$ and $(S_2,\theta_2)$ are $G(F)$-conjugate. By \Cref{regular}, any $\pi\in \mathcal{R}_{[s]}$ is extra regular, so we can find a pair $(S,\theta)$ such that $\pi=\pi(S,\theta)$.

Note that in our case, the extension $\tilde{\kappa}_{S,\theta}$ is concrete. By \cite[Lemma 7.1.1]{KalethaEndo}, we have
\begin{equation}\label{equation1}
    S(F)=Z(F)\cdot S(F)_0.
\end{equation}
Therefore, $J_S=Z(F)\cdot G_{x,0}$, and the extension is obvious. 

Two pairs $(S,\theta)$ and $(S',\theta')$ are stably conjugate, written as $(S,\theta)\sim_{\mathrm{st}}(S',\theta')$, if some $g\in G(\bar{F})$ induces an $F$-isomorphism
$$c:=\mathrm{Ad}(g)|_S:S\rightarrow S',\qquad \theta'\circ c=\theta.$$
Rational conjugacy requires $g\in G(F)$. 

\begin{lemma}
    Pick $\pi=\pi(S,\theta)\in \mathcal{R}_{[s]}$. Assume $(S',\theta')$ is stably conjugate to $(S,\theta)$. Then $\bar{\Theta}_0(\pi(S',\theta'))=[s]$.
\end{lemma}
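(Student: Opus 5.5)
The plan is to compute $\bar{\Theta}_0(\pi(S',\theta'))$ directly from the Moy--Prasad type $(x_{S'},\kappa^0_{S',\theta'})$. By the discussion preceding the lemma, this Deligne--Lusztig parameter is determined by the pair $(\bar S',\bar\theta')$. It is the image in $(\hat T//W)^{[q]}$ of the point of $\hat{S}'$ attached to $\bar\theta'$ via $\iota$ and the duality of \cite[\S 5.3]{deligne1976representations}, transported to $\hat T$ by any $\bar F$-conjugation $T\to S'$. The key observation is that this recipe only uses the \emph{geometric} conjugacy data of the torus together with the character. Stable conjugacy preserves exactly this data.

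The first step is to make the parameter explicit, as in the proof of \Cref{regular}. Choose an unramified $E/F$ splitting $S$, and an $E$-isomorphism $i:T_E\to S_E$ given by conjugation. Put $\psi(t)=\bar\theta\circ N_{k_E/k}\circ\bar i(t)$ on $\bar T(k_E)$. Then $[s]$ is the $W$-orbit of the point $\hat t\in\hat T$ corresponding to $\psi$ under $\iota$. This is well defined: changing $i$ by $w\in W$ changes $\hat t$ by $w$, and changing $E$ is harmless by the norm compatibility in \cite[\S 5.3]{deligne1976representations}.

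The second step is to treat $S'$. Let $c=\mathrm{Ad}(g)|_S:S\to S'$ be the $F$-isomorphism with $\theta'\circ c=\theta$. Since $S'$ is $F$-isomorphic to $S$, it is also unramified elliptic and split by $E$, so the same recipe applies to it. The map $c$ respects $S(F)_0$, because the parahoric filtration on an unramified torus is intrinsic (it is the maximal bounded subgroup). Hence $c$ induces a $k$-isomorphism $\bar c:\bar S\to\bar S'$ with $\bar\theta'\circ\bar c=\bar\theta$. Using $i':=c\circ i:T_E\to S'_E$, which is again given by conjugation by an element of $G(\bar F)$, we get $\psi'=\bar\theta'\circ N\circ\bar c\circ\bar i=\psi$. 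Here we use that $\bar c$ is defined over $k$ and so commutes with the norm. Hence both pairs give the same $\hat t$, and $\bar\Theta_0(\pi(S',\theta'))=[s]$.

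I expect the main obstacle to be the claim that $c\circ i$ is a legitimate choice of ``conjugation'' isomorphism, and that the resulting parameter agrees with the one defined in \cite[\S 7.4]{bhattacharya2025description} via the reductive quotient $\mathbb{G}_{x_{S'}}$ rather than via $G$. Concretely, I must check that the DL parameter of $\mathcal X$ in $(\hat T//W_x)^{[q]}$, pushed to $(\hat T//W)^{[q]}$, coincides with the recipe above using an embedding of $T$ into $G$ over $E$. I would try to handle this by choosing a maximally split torus of $\mathbb{G}_{x}$ that lifts to a conjugate of $T$ in an apartment containing $x$. The identification $W_x\subset W$ is then compatible with the reduction map, and any two such choices differ by an element of $W$, which disappears in $\hat T//W$.
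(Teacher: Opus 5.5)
Your proposal is correct and follows the paper's proof essentially step for step: you reduce the $F$-isomorphism $c$ to a $k$-isomorphism $\bar c$, take $i'=c\circ i$, and use that $\bar c$ commutes with $N_{k_E/k}$ to get $\psi'=\psi$, hence $\hat t=\hat t'$. Your extra justifications go slightly beyond what the paper writes, which uses these points implicitly (already in the proof of \Cref{regular}): that $E$ also splits $S'$, that $c$ preserves $S(F)_0$, and that the recipe agrees with the definition via $(\hat T//W_x)^{[q]}$.
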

\begin{proof}
    Since $S$ and $S'$ are unramified, the $F$-isomorphism $c$ reduces to a $k$-isomorphism $\bar{c}:\bar{S}\rightarrow\bar{S}'$. Choose a finite unramified extension $E/F$ such that $S$ and $S'$ both split over $E$. Over $E$, choose a conjugation isomorphism 
    $$i:T_E\xrightarrow{\sim} S_E,$$
    and define
    $$i':=c\circ i:T_E\xrightarrow{\sim} S'_E.$$
    Let $k_E$ denote the residue field of $E$. Then reduction gives
    $$\bar{i}:\bar{T}_{k_E}\xrightarrow{\sim}\bar{S}_{k_E},\qquad \bar{i}':\bar{T}_{k_E}\xrightarrow{\sim}\bar{S}_{k_E}'$$
    satisfying $\bar{i}'=\bar{c}\circ \bar{i}$.

    Let $\theta_E:=\theta\circ N_{E/F}$ (resp. $\theta_E':=\theta'\circ N_{E/F}$) be the induced character on $S(E)$ (resp. $S'(E)$) and $\bar{\theta}_{k_E}$ (resp. $\bar{\theta}_{k_E}'$) be the corresponding character on $\bar{S}(k_E)$ (resp. $\bar{S}(k_E)'$). Note that $\bar{\theta}_{k_E}=\bar{\theta}\circ N_{k_E/k}$ and $\bar{\theta}_{k_E}'=\bar{\theta}'\circ N_{k_E/k}$. By \cite[\S 5.3]{deligne1976representations}, $\bar{\theta}$ and $\bar{\theta}_{k_E}$ (resp. $\bar{\theta}'$ and $\bar{\theta}_{k_E}'$) have the same corresponding point $\hat{s}$ (resp. $\hat{s}'$) in $\hat{S}$ (resp. $\hat{S}')$. Let $\hat{i}:\hat{S}\rightarrow \hat{T}$ and $\hat{i}':\hat{S}'\rightarrow \hat{T}$ be the maps induced by $i$ and $i'$, respectively. Write $\hat{t}=\hat{i}(\hat{s})\in \hat{T}$ and $\hat{t}'=\hat{i}'(\hat{s}')\in \hat{T}$.

    Since $\bar{c}$ is defined over $k$, it commutes with the Frobenius map. Let $\psi=\bar{\theta}_{k_E}\circ \bar{i}$ and $\psi'=\bar{\theta}_{k_E}'\circ \bar{i}'$ be two characters on $\bar{T}(k_E)$. Then for any $a\in \bar{T}({k_E})$, we have 
    \begin{align*}
        \psi'(a) & = \bar{\theta}_{k_E}'\circ \bar{i}'(a) \\ 
        & = \bar{\theta}'\circ N_{k_E/k}\circ \bar{c} \circ\bar{i}(a) \\
        & = \bar{\theta}'\circ c\circ N_{k_E/k} \circ \bar{i}(a) \\
        & = \bar{\theta}\circ N_{k_E/k}\circ \bar{i}(a) \\
        & = \psi(a).
    \end{align*}
    Therefore, $\psi$ and $\psi'$ denote the same character on $\bar{T}(k_E)$, which implies that $\hat{t}=\hat{t}'$. Therefore, $[s]=\bar{\Theta}_0(\pi(S,\theta))=[\hat{t}]=[\hat{t}']=\bar{\Theta}_0(\pi(S',\theta'))$. The proof is complete.
\end{proof}

By \cite[Lemma 9.6.1]{debacker2009depth}, a Debacker--Reeder $L$-packet can be written as
$$\Pi_{(S,\theta)}^{\mathrm{DR}}=\{\pi(S',\theta')\mid (S',\theta')\sim_{\mathrm{st}}(S,\theta)\}.$$
Therefore, by the above lemma if $\pi=\pi(S,\theta)\in \mathcal{R}_{[s]}$, then any $\pi'\in \Pi_{(S,\theta)}^{\mathrm{DR}}$ also belongs to $\mathcal{R}_{[s]}$. By \cite[Remark 4.9.3]{kaletha2019regular}, every representation in $\mathcal{R}_{[s]}$ arises from the construction of \cite[\S 4.4]{debacker2009depth} and hence belongs to a Debacker--Reeder $L$-packet. Combining all the previous discussion, we can state the main theorem of this section:

\begin{thm}
    The set $\mathcal{R}_{[s]}$ can be written as a disjoint union of Debacker--Reeder $L$-packets.
\end{thm}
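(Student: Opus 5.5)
The plan is to assemble the three facts established just before the statement. Fix $[s]$ strongly regular elliptic and take any $\pi\in\mathcal{R}_{[s]}$.

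\emph{Each $\pi$ lies in some packet.} By \Cref{regular}, $\pi$ is an extra regular depth-zero supercuspidal representation. By \cite[Proposition 3.4.27]{kaletha2019regular} we may write $\pi=\pi(S,\theta)$ for a pair $(S,\theta)$ consisting of a maximal unramified elliptic torus $S$ and a regular depth-zero character $\theta$. We then invoke \cite[Remark 4.9.3]{kaletha2019regular}, which identifies Kaletha's construction in this depth-zero setting with that of \cite[\S 4.4]{debacker2009depth}. Hence $\pi$ belongs to the DeBacker--Reeder packet $\Pi^{\mathrm{DR}}_{(S,\theta)}$. This shows
$$\mathcal{R}_{[s]}\subset \bigcup_{\pi(S,\theta)\in\mathcal{R}_{[s]}}\Pi^{\mathrm{DR}}_{(S,\theta)}.$$

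\emph{Each such packet lies in $\mathcal{R}_{[s]}$.} Let $\pi(S,\theta)\in\mathcal{R}_{[s]}$. By \cite[Lemma 9.6.1]{debacker2009depth}, every member of $\Pi^{\mathrm{DR}}_{(S,\theta)}$ has the form $\pi(S',\theta')$ with $(S',\theta')\sim_{\mathrm{st}}(S,\theta)$. The preceding lemma then gives $\bar\Theta_0(\pi(S',\theta'))=[s]$. Each $\pi(S',\theta')$ also has depth zero, since it is compactly induced from the inflation of a representation of $G_{x,0}/G_{x,0+}$. Therefore the whole packet lies in $\mathcal{R}_{[s]}$, and the displayed inclusion is an equality.

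\emph{Disjointness.} Two DeBacker--Reeder packets are either equal or disjoint, because each is the image of a stable conjugacy class of pairs under the injective-on-rational-classes map $(S,\theta)\mapsto\pi(S,\theta)$. Concretely, suppose $\pi(S_1,\theta_1)\simeq\pi(S_2,\theta_2)$ with the two representations lying in packets indexed by $(S,\theta)$ and $(S'',\theta'')$ respectively. Then $(S_1,\theta_1)$ and $(S_2,\theta_2)$ are $G(F)$-conjugate by \cite[Proposition 3.4.27]{kaletha2019regular}. Since stable conjugacy is an equivalence relation coarser than rational conjugacy, the two indexing stable classes coincide. So $\mathcal{R}_{[s]}$ is the disjoint union of the distinct packets $\Pi^{\mathrm{DR}}_{(S,\theta)}$ that meet it.

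\emph{Main obstacle.} The delicate point is the compatibility of conventions between Kaletha's $\pi(S,\theta)$ and DeBacker--Reeder's parametrization. The two constructions could differ by a twist or sign character in the depth-zero extension $\tilde\kappa_{S,\theta}$, which would break the identification of packets with stable classes of pairs. Here this worry is removed by \eqref{equation1}: it gives $J_S=Z(F)G_{x,0}$, so the extension is the obvious one. One should still check that \cite[Remark 4.9.3]{kaletha2019regular} applies under the standing restrictions on $p$ and $q$, and that the DeBacker--Reeder regularity hypothesis matches extra regularity. The latter is exactly the trivial stabilizer statement in \Cref{regular}.
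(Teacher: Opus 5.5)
Your proposal is correct and follows essentially the paper's argument. \Cref{regular} together with Kaletha's parametrization and his Remark 4.9.3 places each $\pi\in\mathcal{R}_{[s]}$ in a DeBacker--Reeder packet, and DeBacker--Reeder's Lemma 9.6.1 combined with the stable-conjugacy lemma puts each such packet inside $\mathcal{R}_{[s]}$; your depth-zero check for the members $\pi(S',\theta')$ and your disjointness argument fill in points the paper leaves implicit.
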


\subsubsection{Proof of \Cref{3.1}}\label{section5}

Fix Haar measures $\mathrm{d}g$ on $G(F)$ and $\mathrm{d}z$ on $Z(F)$. Note that for any $\pi(S',\theta')\in \Pi_{(S,\theta)}^{\mathrm{DR}}$, the central character $\omega_{\pi(S',\theta')}=\theta|_{Z}$. Stable conjugacy fixes central elements, so all representations in one Debacker--Reeder packet have the same central character. If $\theta|_{Z}$ is unitary, then by \Cref{equation1}, $\theta$ is also unitary.

Let $\hat{Z}_u$ denote the unitary dual of $Z$. For $\omega\in \hat{Z}_u$, let $\mathcal{H}(G,\omega)$ denote the space of locally constant and compactly supported modulo $Z$ functions, satisfying $h(zg)=\omega(z)^{-1}h(g)$. For $\omega_{\pi}=\omega$, put
$$\pi_{\omega}(h)=\int_{Z\backslash G(F)}h(g)\pi(g)\mathrm{d}g.$$
By \cite[Proposition 3.1.4]{binder2019fields}, we have
\begin{equation}\label{equation2}
h(1)=\int_{\mathrm{Temp}(G,\omega)}\operatorname{tr}\pi_{\omega}(h)\mathrm{d}\mu_{\omega}^{\mathrm{PL}}
\end{equation}
where $\mathrm{Temp}(G,\omega)$ denotes the set of tempered irreducible representations of $G$ with central character $\omega$ and $\mathrm{d}\mu_{\omega}^{\mathrm{PL}}$ denotes the Plancherel measure. For discrete representations $\pi\in \mathrm{Temp}(G,\omega)$, $\mu_{\omega}^{\mathrm{PL}}(\pi)=d(\pi)$, the formal degree of $\pi$.

For any $f\in \mathcal{H}(G)$, define
$$f_{\omega}(g)=\int_Zf(zg)\omega(z)\mathrm{d}z.$$
We can see that $f_{\omega}\in \mathcal{H}(G,\omega)$. For any $\pi$ with central character $\omega$, by \cite[Lemma 8.5.1]{li2026base}, we have $\pi(f)=\pi(f_{\omega})$, and therefore $\operatorname{tr}\pi(f)=\operatorname{tr}\pi(f_{\omega})$. Fourier inversion on $Z(F)$ and \Cref{equation2} gives
\begin{equation}\label{3.1.3.2}
    f(1)=\int_{\hat{Z}_u}\int_{\mathrm{Temp}(G,\omega)}\Theta_{\pi}(f)\mathrm{d}\mu_{\omega}^{\mathrm{PL}}\mathrm{d}\omega.
\end{equation}

Let $E_{[s]}$ be the essentially compact invariant distribution corresponding to $\xi^0(1_{[s]})$. Set
$$D_{[s]}(f)=(\xi^0(1_{[s]})*f)(1).$$
With the usual convention in \cite[\S 3]{haines2014stable}, $D_{[s]}(f)=E_{[s]}(f^{\vee})$, where $f^{\vee}(g):=f(g^{-1})$. The map $f\mapsto f^{\vee}$ preserves the kernel of stable orbital integrals. In other words, it sends unstable functions to unstable functions. Therefore, the stability of $D_{[s]}(f)$ is the same as the stability of $E_{[s]}(f)$.

\Cref{regular} makes the selected spectrum supercuspidal. Therefore, by \Cref{3.1.3.2}, we have
\begin{align}\label{equation3}
    D_{[s]}(f) & =(\xi^0(1_{[s]})*f)(1) \notag \\
    & = \int_{\hat{Z}_u}\sum_{\pi\in \mathcal{R}_{[s],\omega}}d(\pi)\Theta_{\pi}(f)\mathrm{d}\omega,
\end{align}
where $\mathcal{R}_{[s],\omega}$ denotes the set of representations in $\mathcal{R}_{[s]}$ with central character $\omega$. Let $[S,\theta]$ denote the stable conjugacy class of $(S,\theta)$. Previous discussions show that the central character of representations in a Debacker--Reeder packet $\Pi_{(S,\theta)}^{\mathrm{DR}}$ is the same. Denote the central character by $\omega_{S,\theta}$. By \cite[\S 5.3]{debacker2009depth}, all representations in one DeBacker-Reeder $L$-packet have the same formal degree. Denote this common formal degress by $d(s,\theta)$. Therefore, $\mathcal{R}_{[s],\omega}$ is also a disjoint union of Debacker-Reeder $L$-packets. Write $[S,\theta]\in \mathcal{R}_{[s],\omega}$ if $\Pi_{(S,\theta)}^{\mathrm{DR}}\subset \mathcal{R}_{[s],\omega}$. We can rewrite \Cref{equation3} as
$$ D_{[s]}(f)=\int_{\hat{Z}_u}\sum_{[S,\theta]\in \mathcal{R}_{[s],\omega}}d(S,\theta)\mathrm{S}\Theta_{(S,\theta)}(f)\mathrm{d}\omega$$
where
$$\mathrm{S}\Theta_{(S,\theta)}:=\sum_{\pi\in \Pi_{(S,\theta)}^{\mathrm{DR}}}\Theta_{\pi}$$
denotes the stable character of the Debacker--Reeder $L$-packet $\Pi_{(S,\theta)}^{\mathrm{DR}}$. By the stability theorem developed in \cite[\S9-\S12]{debacker2009depth}, the stable character $\mathrm{S}\Theta_{(S,\theta)}$ is a stable distribution. Therefore, when the test function $f$ is unstable, we can see that
$$D_{[s]}(f)=0.$$
The proof is complete.

\subsection{Strongly regular non-elliptic case}

In this part, we study the case where $[s]\in \mathrm{DL}_0$ is only strongly regular but not elliptic. We first state the main theorem of this subsection:

\begin{thm}\label{main2}
    Assume $[s]\in \mathrm{DL}_0$ is strongly regular and non-elliptic. Then $\xi^0(1_{[s]})\in \mathfrak{Z}^0(G)$ is stable.
\end{thm}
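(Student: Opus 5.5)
The plan is to reduce to the elliptic case on Levi subgroups, following the outline in the introduction. The first task is to describe $\mathcal{R}_{[s]}$ explicitly; the second is to expand $D_{[s]}(f)=(\xi^0(1_{[s]})*f)(1)$ by the full Plancherel formula, organized by Levi subgroups as in \cite{varma2024some}.

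\emph{Step 1 (the representations in $\mathcal{R}_{[s]}$).} Let $s^q=w_{[s]}(s)$. Let $L\supset T$ be the minimal standard Levi subgroup, up to $W$-conjugacy, such that $w_{[s]}$ lies in $W_L$ and acts elliptically on $X_*(T_{L,\mathrm{ad}})\otimes\R$. Concretely, $L$ is the centralizer of $(X_*(T)\otimes\R)^{w_{[s]}}$; it is proper because $[s]$ is non-elliptic for $G$. By construction, the image $[s]_L\in(\hat T//W_L)^{[q]}$ of $s$ is strongly regular elliptic for $L$. Now take $\pi\in\mathcal{R}_{[s]}$ with Moy--Prasad type $(x,\mathcal X)$, $x\in\mathcal A(G,T,F)$. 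Exactly as in \Cref{regular}, strong regularity and \cite{deligne1976representations} force $\mathcal X=\pm R_{\bar S}^{\mathbb G_x(k)}(\bar\theta)$ with $\bar\theta$ in general position and $\bar S$ elliptic in $\mathbb G_x$. Lifting $\bar S$ gives an unramified torus $S$ whose type is $w_{[s]}$. The split part of $S$ has centralizer an $F$-Levi $M$ conjugate to $L$, and $S$ is elliptic in $M$.

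Using Moy--Prasad/Morris theory of depth-zero types and Jacquet modules \cite{moy1996jacquet}, I would show that $\pi$ is a subquotient of $\mathrm{Ind}_P^G\sigma$, where $\sigma=\pi_M(S,\theta)$ is a depth-zero supercuspidal of $M$. By \Cref{regular} applied to $M$, the representation $\sigma$ is extra regular, and $\bar\Theta_0^M(\sigma)=[s]_M$ maps to $[s]$. Strong regularity of $[s]$ in $G$ says that no nontrivial element of $N_G(M)/M$ fixes the inertial class of $\sigma$. Hence the Harish-Chandra/Bruhat irreducibility criterion makes $\mathrm{Ind}_P^G\sigma$ irreducible and equal to $\pi$. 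Conversely, every such induction has parameter $[s]$ by the compatibility of $\bar\Theta_0$ with parabolic induction \cite[\S7]{bhattacharya2025description}. This step, giving both irreducibility and the exact description of $\mathcal R_{[s]}$ via types, is the main obstacle. I would first try to deduce it from the explicit Hecke algebra of the depth-zero Bernstein block, which is commutative (a torus-type algebra) because the relative Weyl stabilizer is trivial.

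\emph{Step 2 (Plancherel expansion).} Write the Plancherel formula as a sum over conjugacy classes of Levi subgroups $M$ and integrals over unitary unramified twists $\chi$ of discrete series $\sigma$ of $M$. The Plancherel density is $d(\sigma)\,\mu(\sigma\chi)$, where $\mu$ is Harish-Chandra's $\mu$-function. By Step 1, $D_{[s]}(f)$ involves only $M\sim L$ and $\sigma\in\mathcal R^M_{[s]_M}$, all tempered supercuspidal up to twist. We obtain
\[
D_{[s]}(f)=c\int_{\hat Z_{M,u}}\sum_{\sigma}d(\sigma)\mu(\sigma)\,\Theta_{\mathrm{Ind}_P^G\sigma}(f)\,\mathrm d\omega .
\]
For $M$, the set $\mathcal R^M_{[s]_M}$ is a disjoint union of DeBacker--Reeder packets, by the theorem of the preceding subsection. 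Within a packet, the formal degrees agree, and the $\mu$-functions agree as well, since $\mu$ depends only on the stable (Galois) data of $(S,\theta)$; this should follow from the Langlands--Shahidi/depth-zero computation. The sum therefore regroups into multiples of $\mathrm{Ind}_P^G\mathrm S\Theta^M_{(S,\theta)}$.

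\emph{Step 3 (vanishing on unstable $f$).} Parabolic induction sends stable distributions on $M$ to stable distributions on $G$; dually, the constant term of an unstable $f$ is unstable on $M$. Hence each term $\mathrm S\Theta^M_{(S,\theta)}(f^{(P)})$ vanishes by \cite[\S9--\S12]{debacker2009depth}, applied to $M$ (the residue characteristic hypotheses are inherited by Levi subgroups). This gives $D_{[s]}(f)=0$, and, as in \S\ref{section5}, stability of $\xi^0(1_{[s]})$ follows.
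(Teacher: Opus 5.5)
Your proposal is correct and follows essentially the paper's argument. You identify $\mathcal{R}_{[s]}$ with irreducible inductions $i_P^G(\sigma)$ of extra regular depth-zero supercuspidals with strongly regular elliptic Levi parameters (via the trivial inertial stabilizer $W_\sigma=1$), expand by the Levi-wise Plancherel formula of \cite{varma2024some}, regroup into DeBacker--Reeder stable characters on the Levi, and kill unstable $f$ via van Dijk's formula and instability of constant terms.

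The differences are minor. The paper goes from the supercuspidal support to the type and proves the compatibility $\bar\Theta_0(\pi)=j_L(\bar\Theta_0^L(\sigma))$ itself in \Cref{lemma1}; your converse inclusion needs exactly this, and it does not follow just from citing \cite[\S7]{bhattacharya2025description}. The paper also sums over all elliptic preimages $j_L^{-1}([s])_{\mathrm{ell}}$, which are the $W(L)$-translates of your single $[s]_M$. Finally, it takes the constancy of $\mu_{G,L}$ on packets from \cite[Corollary 4.2.13(1)]{varma2024some}, and it proves the constant-term instability as \Cref{lemma4} rather than assuming it.
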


\subsubsection{Construction of $\mathcal{R}_{[s]}$}

As above, let $\mathcal{R}_{[s]}$ denote the set of irreducible representations of $G$ such that $\xi^0(1[s])$ acts on them by $1$. Since $[s]$ is strongly regular, there exists a unique element $w_{[s]}\in W$ such that $s^q=w_{[s]}(s)$. 

For a standard Levi subgroup $M\subset G$, define $\mathrm{DL}_0^M=(\hat{T}//W_M)^{[q]}$, and the natural map $j_M:\mathrm{DL}_0^M\rightarrow \mathrm{DL}_0$. Let $\bar{\Theta}_0^M$ denote the map from $\mathrm{Irr}_0(M)$ to $\mathrm{DL}_0^M$.

Choose an irreducible representation $\pi\in\mathcal{R}_{[s]}$. Note that the depth of $\pi$ must be zero. Suppose the supercuspidal support of $\pi$ is $(L,\sigma)$, where $L$ is a standard Levi subgroup and $\sigma$ is an irreducible supercuspidal representation of $L$. In other words, $\pi$ is an irreducible subquotient of $i_P^G( \sigma)$, where $P=LU$ is a standard parabolic subgroup of $G$ containing $L$. By \cite[Theorem 5.2(1)]{moy1996jacquet}, the depth of $\sigma$ is also zero. 

\begin{lemma}\label{lemma1}
    We have the following identity:
    \begin{equation}
        \bar{\Theta}_0(\pi)=j_L(\bar{\Theta}_0^L(\sigma)).
    \end{equation}
\end{lemma}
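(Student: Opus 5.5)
The plan is to compare Moy--Prasad types of $\pi$ and of $\sigma$. I will realize the type of $\pi$ as the image of a type of $\sigma$ under the standard compatibility of unrefined minimal $K$-types with parabolic induction.

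First, choose a Moy--Prasad type $(y,\mathcal{Y})$ of the depth-zero supercuspidal $\sigma$ of $L$. By Moy--Prasad, $y$ is a vertex of $\mathcal{B}(L,F)$, which we may take in $\mathcal{A}(T,F)$, and $\mathcal{Y}$ is an irreducible cuspidal representation of $\mathbb{L}_y(k)$. Its parameter $\bar\Theta_0^L(\sigma)\in\mathrm{DL}_0^L$ is the image in $(\hat T//W_L)^{[q]}$ of the element of $(\hat T//W_{L,y})^{[q]}$ attached to $\mathcal{Y}$.

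Next, view $y$ as a point of $\mathcal{B}(G,F)$ via $\mathcal{B}(L,F)\hookrightarrow\mathcal{B}(G,F)$, chosen generic in its $L$-facet. Then $\mathbb{L}_y$ is a Levi subgroup of $\mathbb{G}_y$, and $(P\cap G_{y,0})G_{y,0+}/G_{y,0+}$ is a parabolic subgroup with Levi $\mathbb{L}_y(k)$. By \cite[Theorem 5.2, Proposition 6.4]{moy1996jacquet}, every irreducible subquotient $\pi$ of $i_P^G(\sigma)$ contains an unrefined minimal $K$-type $(y,\mathcal{X}')$. Here $\mathcal{X}'$ is an irreducible constituent of the Harish-Chandra induction $R_{\mathbb{L}_y}^{\mathbb{G}_y}(\mathcal{Y})$. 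Moreover, the $G_{y,0}$-types appearing in $\pi$ all lie in the Harish-Chandra series of $(\mathbb{L}_y,\mathcal{Y})$. Any Moy--Prasad type $(x,\mathcal{X})$ of $\pi$ is associate to $(y,\mathcal{X}')$, and by \cite[Proposition 7.11]{bhattacharya2025description} the parameter $\theta_{x,\mathcal{X}}$ is independent of this choice. It therefore suffices to compute the parameter from the pair $(y,\mathcal{X}')$, or from the cuspidal datum underlying it. I also need to check that the construction of \cite[\S 7.4]{bhattacharya2025description} computes $\bar\Theta_0$ through the cuspidal support at the finite level, so that a non-cuspidal $G_{y,0}$-type is allowed. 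If it does not, I will instead pass from $y$ to a facet $x$ in its closure where $\mathcal{X}'$ is replaced by a cuspidal type in the same series.

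The key finite-group input is that Harish-Chandra induction preserves Deligne--Lusztig series. If $\mathcal{Y}$ lies in the geometric conjugacy class (Lusztig series) attached to $[s]_y\in(\hat T//W_{L,y})^{[q]}$, then every constituent of $R_{\mathbb{L}_y}^{\mathbb{G}_y}(\mathcal{Y})$ lies in the series attached to the image of $[s]_y$ in $(\hat T//W_y)^{[q]}$. This follows from transitivity of Deligne--Lusztig induction, $R_{\mathbb{L}_y}^{\mathbb{G}_y}\circ R_{\bar S}^{\mathbb{L}_y}=R_{\bar S}^{\mathbb{G}_y}$, together with the disjointness of series \cite[Theorem 6.8]{deligne1976representations}.

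Finally, $W_{L,y}\subset W_y\cap W_L\subset W$, so the square of maps
\[
(\hat T//W_{L,y})^{[q]}\to(\hat T//W_y)^{[q]}\to(\hat T//W)^{[q]},\qquad (\hat T//W_{L,y})^{[q]}\to(\hat T//W_L)^{[q]}\xrightarrow{j_L}(\hat T//W)^{[q]}
\]
commutes trivially. Chasing the parameter of $\mathcal{Y}$ around both paths gives $\bar\Theta_0(\pi)=j_L(\bar\Theta_0^L(\sigma))$.

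I expect the main obstacle to be the second step: making sure the type of $\pi$ can be taken at the same point $y$ and lies in the Harish-Chandra series of $\mathcal{Y}$. This requires the precise form of the Moy--Prasad Jacquet module results, and matching the identification of $\hat T$ used for $L$ and for $G$, i.e. that the isomorphisms $\iota$ and $\hat T$ are compatible under $\mathbb{L}_y\subset\mathbb{G}_y$. The latter should be automatic, since both are defined using the same split torus $T$ and the same $\iota$.
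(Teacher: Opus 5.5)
Your outline can be made to work, but it handles ``all irreducible subquotients of $i_P^G(\sigma)$'' by a different and heavier route than the paper, and the step that carries the weight is only cited.

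\textbf{How the paper proceeds.} The paper takes the lift $\tilde x\in\mathcal{A}(G,T,F)$ generic, i.e.\ $\alpha(\tilde x)\notin\mathbb{Z}$ for all $\alpha\in\Phi(G,T)\setminus\Phi(L,T)$. Then $\mathbb{L}_x(k)\to\mathbb{G}_{\tilde x}(k)$ is an isomorphism, the cuspidal $\mathcal{X}$ is already a Moy--Prasad type for $G$ at $\tilde x$, and $W_{\tilde x}=W_{L,x}$. No Harish-Chandra induction or Lusztig-series argument is needed. Your phrase ``generic in its $L$-facet'' gives exactly this choice, but then your parabolic $(P\cap G_{y,0})G_{y,0+}/G_{y,0+}$ is all of $\mathbb{G}_y(k)$ and $\mathcal{X}'=\mathcal{Y}$, so the detour through $R_{\mathbb{L}_y}^{\mathbb{G}_y}$ is vacuous.

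More importantly, the paper never claims that \emph{every} subquotient contains this type. It writes down an explicit $G_{\tilde x,0}$-embedding $v\mapsto 1_v$ of $\mathcal{X}$ into $i_P^G(\sigma)$. This is well defined by the Iwahori decomposition and $U(F)\cap G_{\tilde x,0}\subset G_{\tilde x,0+}$. It concludes that \emph{some} subquotient $\pi'$ has $\bar{\Theta}_0(\pi')=j_L(\bar{\Theta}_0^L(\sigma))=[s']$. It then passes to the given $\pi$ because $\xi^0(1_{[s']})\in\mathfrak{Z}(G)$ acts on all of $i_P^G(\sigma)$ by a single scalar \cite{haines2014stable}.

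\textbf{Where your step 2 differs.} You assert that every irreducible subquotient contains the type. That is true, but it is the depth-zero type/cover property (Morris, Bushnell--Kutzko), not something Moy--Prasad's Theorem 5.2 or Proposition 6.4 hands you directly. The Bernstein-center argument is exactly what removes the obstacle you flagged.

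\textbf{Repairs needed if you keep a non-generic $y$.}
\begin{itemize}
\item A pair $(y,\mathcal{X}')$ with $\mathcal{X}'$ non-cuspidal is not a Moy--Prasad type. Neither associativity nor \cite[Proposition 7.11]{bhattacharya2025description} applies to it until you carry out your fallback, which is just the generic choice above.
\item The claim that all $G_{y,0}$-types of $\pi$ lie in the Harish-Chandra series of $(\mathbb{L}_y,\mathcal{Y})$ is unjustified, since the other double cosets in $P(F)\backslash G(F)/G_{y,0}$ also contribute. It is not needed anyway.
\item Preservation of Lusztig series under $R_{\mathbb{L}_y}^{\mathbb{G}_y}$ does not follow from transitivity plus disjointness alone. $\mathcal{Y}$ is only a constituent of the virtual character $\pm R_{\bar S}^{\mathbb{L}_y}(\bar\theta)$, and multiplicities can cancel. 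You would have to invoke the standard theorem that Harish-Chandra induction preserves Lusztig series.
\end{itemize}
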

\begin{proof}
    Denote the corresponding vector space of $\sigma$ by $V$, and use the same symbol $\sigma$ to denote the inflation of $\sigma$ along $P\rightarrow L$. By \cite[Proposition 6.8]{moy1996jacquet}, we can choose a Moy--Prasad type of $(\sigma,V)$ of the form $(x,\mathcal{X})$, where $x\in\mathcal{A}(L,T,F)$ is a vertex and $(\mathcal{X},V_{\mathcal{X}})$ is an irreducible cuspidal representation of $\mathbb{L}_x(k)$. Choose an $L_{x,0}$-equivariant embedding $V_{\mathcal{X}}\hookrightarrow V$, and identify $V_{\mathcal{X}}$ with its image. For any $v\in V_{\mathcal{X}}$ and $l\in L_{x,0}$, we have $\sigma(l)v=\mathcal{X}(l)v$. 

    Note that $\mathcal{A}(L,T,F)=(X_*(T)/X_*(Z(L)^{\circ}))\otimes \mathbb{R}$ and $\mathcal{A}(G,T,F)=(X_*(T)/X_*(Z(G)^{\circ}))\otimes \mathbb{R}$, and there is a natural projection $p_L:\mathcal{A}(G,T,F)\rightarrow \mathcal{A}(L,T,F)$. Choose a lift $\tilde{x}\in \mathcal{A}(G,T,F)$ of $x$ such that $\alpha(\tilde{x})\notin \mathbb{Z}$ for any root $\alpha\in \Phi(G,T)\backslash \Phi(L,T)$. Then the inclusion $L_{x,0}\hookrightarrow G_{\tilde{x},0}$ induces an isomorphism $\mathbb{L}_x(k)=L_{x,0}/L_{x,0+}\xrightarrow{\sim}G_{\tilde{x},0}/G_{\tilde{x},0+}=\mathbb{G}_{\tilde{x}}(k)$. Therefore, $(\mathcal{X},V_{\mathcal{X}})$ can be regarded as an irreducible cuspidal representation of $\mathbb{G}_{\tilde{x}}(k)$. We use the same symbol $(\mathcal{X},V_{\mathcal{X}})$ to denote the inflation to $L_{x,0}$ and $G_{\tilde{x},0}$. 

    For any $v\in V_{\mathcal{X}}$, we define a function $1_v\in i_P^G(\sigma)$ as follows:
    $$1_v(g)=
    \begin{cases}
        \delta_P^{\frac{1}{2}}(p)\sigma(p)\mathcal{X}(h)v, & g=ph\in P(F)G_{\tilde{x},0} \\
        0, & g\notin P(F)G_{\tilde{x},0}.
    \end{cases}
    $$
    We first need to check that $1_v$ is well-defined. Note that $U(F)\cap G_{\tilde{x},0}\subset G_{\tilde{x},0+}$. Moreover, by the Iwahori decomposition, we have $P(F)\cap G_{\tilde{x},0}=L_{x,0}\cdot (U(F)\cap G_{\tilde{x},0})$. For any $g\in P(F)\cap G_{\tilde{x},0}$, write $g=lu$, where $l\in L_{x,0}$ and $u\in U(F)\cap G_{\tilde{x},0}\subset G_{\tilde{x},0+}$. Then we have
    $$\delta_P^{\frac{1}{2}}(g)\sigma(g)v=\sigma(lu)v=\sigma(l)v=\mathcal{X}(l)v=\mathcal{X}(lu)v.$$
    The function is well-defined.

    Now for any $g_0\in G_{\tilde{x},0}$, 
    $$(g_0\cdot 1_v)(g)=1_v(gg_0)=\delta_P^{\frac{1}{2}}(p)\sigma(p)\mathcal{X}(hg_0)v=1_{\mathcal{X}(g_0)v}(g).$$
    The preceding identity shows that $1_v$ is right $G_{\tilde{x},0+}$-invariant, hence smooth. Together with its defining $P(F)$-equivariance, this shows that $1_v\in i_P^G(\sigma)$. Moreover, the map $v\mapsto 1_v$ induces a $G_{\tilde{x},0}$-equivariant embedding $V_{\mathcal{X}}\hookrightarrow i_P^G(\sigma)$. Since $i_P^G(\sigma)$ has finite length and contains the irreducible $G_{\tilde{x},0}$-representation $(\mathcal{X},V_{\mathcal{X}})$, some irreducible subquotient $\pi'$ of $i_P^G(\sigma)$ contains $(\tilde{x},\mathcal{X})$ as a Moy--Prasad type. By \cite[Proposition 7.11]{bhattacharya2025description}, and the identification of reductive quotients above,
    $$\bar{\Theta}_0(\pi')=j_L(\bar{\Theta}_0^L(\sigma))=:[s'].$$
    Thus $\xi^0(1_{[s']})$ acts on $\pi'$ by $1$. By \cite[\S 3.3.2]{haines2014stable}, it acts by a scalar on $i_P^G(\sigma)$; this scalar must therefore be $1$. In particular, it acts on $\pi$ by $1$, so
    $$\bar{\Theta}_0(\pi)=\bar{\Theta}_0(\pi')=j_L(\bar{\Theta}_0^L(\sigma)).$$
    The proof is complete.
\end{proof}

\begin{lemma}\label{lemma2}
    If $(\sigma,V)$ is depth-zero supercuspidal on $L(F)$ and $j_L(\bar{\Theta}_0^L(\sigma))=[s]$, then $\bar{\Theta}_0^L(\sigma)\in \mathrm{DL}_0^L$ is strongly regular elliptic.
\end{lemma}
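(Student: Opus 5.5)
We split the claim into strong regularity and ellipticity relative to $L$, using the description of $\bar{\Theta}_0^L(\sigma)$ through a Moy--Prasad type of $\sigma$. Write $\bar{\Theta}_0^L(\sigma)=[s]_L\in(\hat T//W_L)^{[q]}$ with representative $s\in\hat T$. Then $j_L([s]_L)=[s]$, so the same $s$ represents $[s]\in\mathrm{DL}_0$.

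\emph{Strong regularity.} This step is formal. Since $W_L\subset W$ and $\{w\in W\mid w(s)=s\}=\{1\}$, the stabilizer of $s$ in $W_L$ is trivial. Moreover, $s^q=w(s)$ holds for a unique $w\in W$, namely $w_{[s]}$. On the other hand, $[s]_L$ is $q$-fixed in $\hat T//W_L$, so $s^q=w_L(s)$ for some $w_L\in W_L$. Uniqueness gives $w_L=w_{[s]}$. In particular $w_{[s]}\in W_L$, and it is the element attached to $[s]_L$ in the ellipticity condition for $L$.

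\emph{Ellipticity.} We must show $(X_*(T_{L,\mathrm{ad}})\otimes\R)^{w_{[s]}}=0$, where $X_*(T_{L,\mathrm{ad}})\otimes\R=(X_*(T)/X_*(Z(L)^\circ))\otimes\R$ is the space underlying $\mathcal A(L,T,F)$. Since $\sigma$ is depth-zero supercuspidal, by \cite[Proposition 6.8]{moy1996jacquet} (as in the proof of \Cref{lemma1}) it has a Moy--Prasad type $(x,\mathcal X)$ with $x\in\mathcal A(L,T,F)$ a vertex and $\mathcal X$ cuspidal on $\mathbb L_x(k)$. By \cite[Corollary 7.7]{deligne1976representations}, $\mathcal X$ occurs in some $R_{\bar S}^{\mathbb L_x(k)}(\bar\theta)$, and the parameter of $(\bar S,\bar\theta)$ in $(\hat T//W_{L,x})^{[q]}$ maps to $[s]_L$. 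Since $[s]_L$ is strongly regular, $\bar\theta$ is in general position. Hence $\pm R_{\bar S}^{\mathbb L_x(k)}(\bar\theta)$ is irreducible and equals $\mathcal X$. Cuspidality of $\mathcal X$ then forces $\bar S$ to be elliptic in $\mathbb L_x$ by \cite[Proposition 8.2]{deligne1976representations}, exactly as in the proof of \Cref{regular}.

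It remains to translate ellipticity of $\bar S$ into the condition on $w_{[s]}$. The torus $\bar S$ is of type $w'\in W_{L,x}$, meaning Frobenius acts on $X_*(\bar S)\cong X_*(T)$ by $w'$ up to the usual inverse convention. Ellipticity of $\bar S$ in $\mathbb L_x$ means $w'$ has no nonzero fixed vector on $X_*(T)/X_*(Z(\mathbb L_x)^\circ)\otimes\R$. Because $x$ is a vertex of $\mathcal A(L,T,F)$, the group $\mathbb L_x$ has the same central torus rank as $L$, so this space is $(X_*(T)/X_*(Z(L)^\circ))\otimes\R$. By the equivariance of finite torus duality (\cite[\S5]{deligne1976representations}), the Frobenius condition on $\hat s$ reads $s^q=w'(s)$ up to conjugating $s$ within its $W_{L,x}$-orbit. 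Strong regularity then identifies $w'$ with $w_{[s]}$ up to $W_L$-conjugacy, and ellipticity is conjugation invariant.

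The main obstacle is this last bookkeeping: matching the type of $\bar S$ with $w_{[s]}$, including the convention $w$ versus $w^{-1}$ and the choice of representative $s$ within its $W_L$-orbit. We would handle it with the same transport argument $i:T_E\to S_E$ used in the proofs of \Cref{regular} and the stable-conjugacy lemma. Ellipticity is unaffected by inversion or conjugation, so either convention suffices.
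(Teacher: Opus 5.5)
Your proposal is correct. The strong-regularity half matches the paper's argument, and so does the Deligne--Lusztig/cuspidality step producing an elliptic $\bar S\subset\mathbb L_x$ with $\mathcal X=\pm R_{\bar S}^{\mathbb L_x(k)}(\bar\theta)$. The two proofs diverge only in how ellipticity of $\bar S$ is converted into ellipticity of $[s]_L$.

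The paper does not stay inside $\mathbb L_x$. It lifts $\bar S$ to a maximal unramified torus $S\subset L$ that is elliptic in $L$, using \cite[Lemma 3.4.4(2)]{kaletha2019regular}. It then picks $h\in L(E)$ with $i=\mathrm{Ad}(h)\colon T_E\to S_E$ and lets $w$ be the image of $h^{-1}\gamma(h)$. Transporting $\bar\theta$ gives $s_L^q=w(s_L)$ directly, so $w=w_s$ by strong regularity. Finally, $\gamma\circ i_*=i_*\circ w$ together with $(X_*(S_E)\otimes\R)^{\gamma}=X_*(Z(L)^\circ)\otimes\R$ yields $Y^{w}=X_*(Z(L)^\circ)\otimes\R$.

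You instead stay at the finite level and read ellipticity off the type $w'$ of $\bar S$ in $\mathbb L_x$. The extra input you need is that at a vertex $x$,
\[
X_*(Z(\mathbb L_x)^\circ)\otimes\R=X_*(Z(L)^\circ)\otimes\R .
\]
This is true. The point $x$ is a vertex exactly when $\Phi_x=\{\alpha\in\Phi(L,T):\alpha(x)\in\Z\}$ spans the same real space as $\Phi(L,T)$, and then the two annihilators in $X_*(T)\otimes\R$ coincide. It is cleaner to state it as this equality of subspaces rather than ``same central rank,'' though rank equality plus the inclusion $X_*(Z(L)^\circ)\subset X_*(Z(\mathbb L_x)^\circ)$ also suffices. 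This fact is essentially what underlies the ellipticity of the lift in Kaletha's lemma.

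What the paper's route buys is an explicit cocycle for the Weyl element, so no duality convention has to be tracked, and it reuses the same transport already used in Proposition~\ref{regular}. What your route buys is that it avoids the $p$-adic lift entirely. The convention ambiguities you flag ($w'$ versus $w'^{-1}$, and the choice of representative in the $W_L$-orbit) are indeed harmless. This is because $W_L$ fixes $X_*(Z(L)^\circ)$ pointwise, and fixed subspaces are unchanged under inversion.
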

\begin{proof}
    As above, choose a Moy--Prasad type $(x,\mathcal{X})$ of $\sigma$, with $x$ a vertex of $\mathcal{B}(L,F)$. Applying the finite Deligne--Lusztig argument in the proof of \Cref{regular}, we obtain a maximal elliptic torus $\bar{S}\subset \mathbb{L}_x(k)$ and a character $\bar{\theta}$ on $\bar{S}$ such that 
    $$\mathcal{X}=\epsilon_{\mathbb{L}_x(k),\bar{S}} R_{\bar{S}}^{\mathbb{L}_x(k)}(\bar{\theta}).$$
    Let $s_L\in \hat{T}$ denote the element corresponding to $(\bar{S},\bar{\theta})$. The strong regularity of $[s]$ implies that $\bar{\Theta}_0^L(\sigma)$ is strongly regular. Then there exists a unique element $w_s\in W_L$ such that $s_L^q=w_s(s_L)$. Write $\bar{\Theta}_0^L(\sigma)=[s_L]_L$.

    Lift $\bar{S}$ to a maximal unramified elliptic torus $S\subset L$ by \cite[Lemma 3.4.4(2)]{kaletha2019regular}. Let $E/F$ be a finite unramified field extension such that $S$ splits over $E$, and denote its residue field by $k_E$. Let $\gamma\in \mathrm{Gal}(E/F)$ be the generator that induces the Frobenius map on $k_E/k$. Choose $h\in L(E)$ such that there is an isomorphism $i:=(\operatorname{Ad}(h):T(E)\rightarrow S(E))$. Since both $S$ and $T$ are defined over $F$, the element $h^{-1}\gamma(h)\in N_L(T)(E)$. Denote its image in the Weyl group by $w$. The isomorphism $i$ induces an isomorphism $\bar{i}:\bar{T}(k_E)\rightarrow \bar{S}(k_E)$. The character $\bar{\theta}$ on $\bar{S}$ can therefore be transferred to a character $\bar{\theta}_T$ on $\{t\in \bar{T}(k_E)\mid t^q=w^{-1}(t)\}$. Define $\bar{\theta}_E=\bar{\theta}\circ N_{k_E/k}$. By \cite[\S 5.3]{deligne1976representations}, $\bar{\theta}$ and $\bar{\theta}_E$ map to the same element $s_L\in \hat{T}$. Let $\bar{\theta}_{T,E}:=\bar{\theta}_E\circ\bar{i}$. Then we have $\bar{\theta}_{T,E}(t)=\bar{\theta}_{T,E}(w(t^q))$ for any $t\in \bar{T}(k_E)$. In other words, $\bar{\theta}_{T,E}(w^{-1}(t))=\bar{\theta}_{T,E}(t^q)$. Therefore, $s_L^q=w(s_L)$. The strong regularity of $s_L$ implies that $w=w_s$.

    Let $Y=X_*(T)\otimes \mathbb{R}=X_*(T_E)\otimes \mathbb{R}$ and $Y_S=X_*(S_E)\otimes \mathbb{R}$. The isomorphism $i$ induces an isomorphism $i_*:Y\rightarrow Y_S$. For any $\lambda\in Y$, we have
    $$\gamma\circ i_*(\lambda)=i_*(w\lambda).$$
    If $\lambda\in Y^w$, then since $S$ is elliptic,
    $$i_*(\lambda)\in Y_S^{\gamma}=X_*(Z(L)^{\circ})\otimes \mathbb{R}.$$
    Therefore,
    $$Y^w=X_*(Z(L)^{\circ})\otimes \mathbb{R}$$
    and this implies the ellipticity of $[s_L]_L$. The proof is complete.
\end{proof}

Define
$$W_{\sigma}:=\{nL(F)\in N_G(L)(F)/L(F):{^n\sigma}\simeq \sigma\otimes \chi \textnormal{ for some }\chi\in X_{\mathrm{nr}}(L)\}.$$
We have the following lemma:
\begin{lemma}\label{lemma3}
    Suppose $j_L(\bar{\Theta}_0^L(\sigma))=[s]$. Then $W_{\sigma}=1$. In particular, $\pi=i_P^G(\sigma)$.
\end{lemma}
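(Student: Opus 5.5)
The plan is to combine the description of $\sigma$ from \Cref{lemma2} with the strong regularity of $[s]$. The latter will force any $n$ normalizing $L$ and fixing $\sigma$ up to an unramified twist to lie in $L(F)$. Irreducibility of $i_P^G(\sigma)$ will then follow from the standard criterion for unitary-type induction, namely Bernstein's description of the endomorphism algebra via the group $W_\sigma$ (Harish-Chandra commuting algebra).

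\textbf{Setup.} By \Cref{lemma2} and the proof of \Cref{regular} applied to $L$, write $\sigma=\pi_L(S,\theta)$. Here $S\subset L$ is a maximal unramified torus, elliptic in $L$, and $\bar\theta$ is in general position on $\bar S\subset\mathbb L_x(k)$. Let $[s_L]_L=\bar\Theta_0^L(\sigma)$, which is strongly regular elliptic in $\mathrm{DL}_0^L$, with $j_L([s_L]_L)=[s]$. Take $n\in N_G(L)(F)$ with ${}^n\sigma\simeq\sigma\otimes\chi$ for some $\chi\in X_{\mathrm{nr}}(L)$.

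\textbf{Step 1: the class of $s_L$ is fixed.} Twisting by an unramified character does not change the restriction to $L_{x,0}$, since $\chi$ is trivial on $L(F)^1\supset L_{x,0}$. Hence $\bar\Theta_0^L(\sigma\otimes\chi)=[s_L]_L$. On the other hand, ${}^n\sigma$ has Moy--Prasad type $(nx,{}^n\mathcal X)$, and the recipe of \cite[Proposition 7.11]{bhattacharya2025description} is transport-of-structure equivariant. Therefore
\[
\bar\Theta_0^L({}^n\sigma)=\bar w\cdot[s_L]_L ,
\]
where $\bar w\in N_W(W_L)/W_L$ is the image of $n$, with $n$ modified by $L(F)$ so that it normalizes $T$. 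Equating the two gives $\bar w(s_L)=u(s_L)$ for some $u\in W_L$. Choose a representative $w\in W$ of $\bar w$ normalizing $W_L$. Strong regularity of $s_L$ in $\hat G$, i.e.\ trivial stabilizer in $W$, then forces $u^{-1}w=1$. So $w\in W_L$ and $nL(F)$ is trivial in $N_G(L)(F)/L(F)$. This gives $W_\sigma=1$.

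\textbf{Step 2: irreducibility.} Since $W_\sigma=1$, the Bernstein--Harish-Chandra theory of intertwining operators shows that for $\sigma$ in general position in its unramified orbit, $i_P^G(\sigma)$ is irreducible. We need this at $\sigma$ itself, which is where the main obstacle lies: reducibility could also come from poles or zeros of the Plancherel $\mu$-function along coroots $\alpha\notin\Phi_L$, without any Weyl element fixing $\sigma$. I would handle this with Silberger's criterion. Reducibility of $i_P^G(\sigma)$ for a maximal Levi $L\subset M_\alpha$ requires $\mu^{M_\alpha}(\sigma)=0$, which in turn forces the rank-one Weyl element $s_\alpha$ to fix $\sigma$ up to an unramified twist. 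That element lies in $W_\sigma$, which we have shown is trivial. By the Harish-Chandra product formula, $\mu$ then has no zeros at $\sigma$ along any such $\alpha$. The Harish-Chandra commuting algebra theorem, in the form that $\mathrm{End}(i_P^G\sigma)$ is spanned by normalized intertwining operators indexed by $W_\sigma$, then yields that $i_P^G(\sigma)$ is irreducible. If the $\mu$-function argument proves delicate, the fallback is Roche's Hecke algebra isomorphism for depth-zero types, which identifies the Bernstein block of $(L,\sigma)$ with modules over $\C[X_{\mathrm{nr}}(L)]\rtimes W_\sigma$-type algebras. With $W_\sigma=1$ this is a commutative Laurent polynomial algebra, so every induced module is irreducible.

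\textbf{Conclusion.} Since $\pi$ is an irreducible subquotient of $i_P^G(\sigma)$ and the latter is irreducible, we get $\pi=i_P^G(\sigma)$.
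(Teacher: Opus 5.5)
Your Step 1 is the paper's argument. You move $n$ into $N_G(T)(F)$ by an element of $L(F)$ and transport the Moy--Prasad type to get $\bar\Theta_0^L({}^n\sigma)=[w_n(s_L)]_L$. You note that an unramified twist does not change the parameter, and you use that $s_L$ has trivial stabilizer in $W$ to get $w_n\in W_L$, hence $n\in L(F)$.

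The gap is in your main route for Step 2. The representation $\sigma$ is the supercuspidal support of an arbitrary $\pi\in\mathcal{R}_{[s]}$. In general it is a non-unitary unramified twist of a unitary supercuspidal, and the paper later applies the lemma to every $\sigma\in\mathcal{R}_{[s_L]_L}$. The Harish-Chandra commuting algebra theorem only computes $\mathrm{End}_G(i_P^G\sigma)$ for unitary $\sigma$. Off the unitary axis, a one-dimensional commutant does not give irreducibility, because $i_P^G(\sigma)$ need not be semisimple. For example, $i_B^{\mathrm{GL}_2}(|\cdot|^{1/2}\otimes|\cdot|^{-1/2})$ is reducible and has only scalar endomorphisms. (There $W_\sigma\neq 1$, so this shows the inference is invalid, not that the lemma fails.)

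Your rank-one input is also misquoted. By Silberger, reducibility at a non-unitary point occurs at poles of $\mu$. At a unitary $\sigma$ with $w\sigma\simeq\sigma$, reducibility occurs exactly when $\mu(\sigma)\neq 0$, so a zero of $\mu$ there signals irreducibility. What $W_\sigma=1$ actually gives, by Harish-Chandra, is that each rank-one $\mu^{M_\alpha}$ is constant on the unramified orbit of $\sigma$. Turning that into irreducibility at a non-unitary point needs a further argument with intertwining operators or Jacquet modules, and your proposal does not contain one.

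The paper closes this step by citing \cite[Corollary 2]{roche2002parabolic}. When no element of $N_G(L)(F)\setminus L(F)$ carries $\sigma$ to an unramified twist of itself, which is precisely $W_\sigma=1$, parabolic induction is an equivalence from the Bernstein block of $\sigma$ in $L$ to the corresponding block of $G$. Hence $i_P^G(\sigma)$ is irreducible regardless of unitarity.

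Your fallback is a depth-zero version of this and can be made to work, but as stated it is incomplete. Commutativity of the Hecke algebra does not by itself make the module attached to $i_P^G(\sigma)$ simple. You need the isomorphism $\mathcal{H}(G,\rho)\cong\mathcal{H}(L,\rho_L)$ to be the Bushnell--Kutzko map $t_P$ of a $G$-cover, which is compatible with parabolic induction. Then $i_P^G(\sigma)$ corresponds to the simple $\mathcal{H}(L,\rho_L)$-module attached to $\sigma$. With that justification, or simply a direct citation of Roche as in the paper, the proof is complete.
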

\begin{proof}
    Fix an element $nL(F)\in W_{\sigma}$, choose $n$ such that $n\in N_G(T)(F)$, and denote its image in $W$ by $w_n$. Choose a Moy--Prasad type $(x,\mathcal{X})$ and an actual point $s_L\in\hat{T}$ corresponding to $(x,\mathcal{X})$. Since $n$ normalizes $L(F)$, it also normalizes $Z(L)$. Choose a lift $\tilde{x}\in \mathcal{A}(G,T,F)$ of $x$. Since $n\in N_G(T)(F)$, the point $n\tilde{x}$ belongs to $\mathcal{A}(G,T,F)$. Denote its projection back to $\mathcal{A}(L,T,F)$ by $nx$. Note that the projection does not depend on the choice of $\tilde{x}$. We have
    $$L_{nx,0}=G_{n\tilde{x},0}\cap L(F)=nL_{x,0}n^{-1}, \qquad L_{nx,0+}=G_{n\tilde{x},0+}\cap L(F)=nL_{x,0+}n^{-1}.$$
    Therefore, $n$ induces a $k$-isomorphism $\bar{n}:\mathbb{L}_x(k)\rightarrow \mathbb{L}_{nx}(k)$. If we write $\mathcal{X}=\epsilon_{\mathbb{L}_x(k),\bar{S}} R_{\bar{S}}^{\mathbb{L}_x(k)}(\bar{\theta})$, then the pair $(nx,\bar{n}\mathcal{X})$, where 
    $$\bar{n}\mathcal{X}:=\epsilon_{\mathbb{L}_{nx}(k),\bar{n}\bar{S}} R_{\bar{n}\bar{S}}^{\mathbb{L}_{nx}(k)}(\bar{n}\bar{\theta}),$$
    will be a Moy--Prasad type for $^n\sigma$. Since $n\in N_G(T)(F)$, 
    the element $w_n(s_L)$ represents the pair $(\bar{n}\bar{S},\bar{n}\bar{\theta})$ and thus $\bar{\Theta}_0^L({^n\sigma})=[w_n(s_L)]_L$.

    By definition of $W_{\sigma}$, we have $^n\sigma\simeq\sigma\otimes \chi$ for some $\chi\in X_{\mathrm{nr}}(L)$. Since $\chi$ is trivial on all parahoric subgroups, we have
    $$\bar{\Theta}_0^L(\sigma)=\bar{\Theta}_0^L(\sigma\otimes \chi)=[s_L]_L.$$
    Therefore, $w_n(s_L)\in [s_L]_L$, which implies that $w_n\in W_L$. This shows that $n\in N_L(T)(F)\subset L(F)$, so $W_{\sigma}=1$. By \cite[Corollary 2]{roche2002parabolic}, $i_P^G(\sigma)$ is irreducible, and therefore $\pi=i_P^G(\sigma)$. The proof is complete.
\end{proof}

We now obtain a description of the set $\mathcal{R}_{[s]}$:
\begin{thm}\label{R_s non elliptic}
    Any representation $\pi\in \mathcal{R}_{[s]}$ can be written as $i_P^G(\sigma)$, where $P=LU$ is a standard parabolic subgroup and $\sigma$ is a depth-zero extra regular supercuspidal representation of $L$ such that $j_L(\bar{\Theta}_0^L(\sigma))=[s]$. Moreover, fix a standard Levi subgroup $L$ and let $P=LU$ be the standard parabolic subgroup containing $L$. Moreover, define
    $$\mathcal{R}_{[s],L}:=\{\pi\in \mathcal{R}_{[s]}\mid \pi\simeq i_P^G(\sigma)\textnormal{ for some irreducible supercuspidal }\sigma\textnormal{ of }L(F)\}.$$
    Then we have
    \begin{equation}
        \mathcal{R}_{[s],L}=\bigcup_{[s_L]_L\in j_L^{-1}([s])_{\mathrm{ell}}}\{i_P^G(\sigma)\mid \sigma\in \mathcal{R}_{[s_L]_L}\},
    \end{equation}
    where $j_L^{-1}([s])_{\mathrm{ell}}$ denotes the elliptic preimage of $[s]$ under the map $j_L$.
\end{thm}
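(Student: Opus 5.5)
The plan is to assemble \Cref{lemma1}, \Cref{lemma2} and \Cref{lemma3}, and then check the two inclusions for $\mathcal{R}_{[s],L}$ separately.

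\emph{First assertion.} Take $\pi\in\mathcal{R}_{[s]}$ with supercuspidal support $(L,\sigma)$, where $L$ is standard and $\sigma$ has depth zero by \cite[Theorem 5.2(1)]{moy1996jacquet}.
\begin{itemize}
\item \Cref{lemma1} gives $j_L(\bar{\Theta}_0^L(\sigma))=\bar{\Theta}_0(\pi)=[s]$.
\item \Cref{lemma3} then gives $W_\sigma=1$ and $\pi= i_P^G(\sigma)$.
\item \Cref{lemma2} shows that $\bar{\Theta}_0^L(\sigma)$ is strongly regular elliptic in $\mathrm{DL}_0^L$.
\item Applying \Cref{regular} to the group $L$ shows that $\sigma$ is extra regular. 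For this we need \Cref{regular} to hold for a Levi $L$, which is split reductive with Weyl group $W_L\subset W$, so the hypotheses on $p$ and $q$ persist.
\end{itemize}
If the supercuspidal support is only given up to conjugacy, we conjugate it to a standard Levi. Since $\bar{\Theta}_0^L$ is compatible with $W$-conjugation, as in the proof of \Cref{lemma3}, the image in $\mathrm{DL}_0$ is unchanged.

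\emph{Inclusion $\subseteq$.} Let $\pi\in\mathcal{R}_{[s],L}$, so $\pi\simeq i_P^G(\sigma)$ with $\sigma$ supercuspidal on $L(F)$. Then $(L,\sigma)$ is the supercuspidal support of $\pi$. By the first part, $[s_L]_L:=\bar{\Theta}_0^L(\sigma)$ lies in $j_L^{-1}([s])$ and is elliptic. By definition $\sigma\in\mathcal{R}_{[s_L]_L}$; the depth of $\sigma$ is zero because $\pi$ has depth zero.

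\emph{Inclusion $\supseteq$.} Let $[s_L]_L\in j_L^{-1}([s])_{\mathrm{ell}}$ and $\sigma\in\mathcal{R}_{[s_L]_L}$. By \Cref{regular} applied to $L$, $\sigma$ is a depth-zero supercuspidal representation, and $j_L(\bar{\Theta}_0^L(\sigma))=[s]$. \Cref{lemma3} then shows that $i_P^G(\sigma)$ is irreducible. The depth of $i_P^G(\sigma)$ equals that of $\sigma$, namely zero, by \cite{moy1996jacquet}. \Cref{lemma1} then gives $\bar{\Theta}_0(i_P^G(\sigma))=[s]$, so $i_P^G(\sigma)\in\mathcal{R}_{[s],L}$.

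\emph{Main obstacle.} The step needing care is legitimately applying \Cref{regular} to $L$ rather than to $G$, which means tracking the Levi analogues of the residue characteristic restrictions and the notion of ellipticity relative to $Z(L)$. Elliptic for $L$ means $(X_*(T/Z(L)^\circ)\otimes\R)^{w}=0$, matching the conclusion of \Cref{lemma2}. I would remark that the proof of \Cref{regular} uses only the vertex argument and the Deligne–Lusztig theory for $\mathbb{L}_x(k)$, and that both go through verbatim for $L$.
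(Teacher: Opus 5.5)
Your proposal is correct and follows the paper's proof essentially step for step: Lemmas \ref{lemma1}, \ref{lemma2}, \ref{lemma3} together with Proposition \ref{regular} applied to $L$ give the first assertion, and the two inclusions for $\mathcal{R}_{[s],L}$ are checked the same way. Your extra remarks are details the paper leaves implicit: that the residue-characteristic hypotheses and the Levi notion of ellipticity transfer to the split Levi $L$, that depth is preserved under parabolic induction, and that conjugating the supercuspidal support to a standard Levi does not change the parameter.
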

\begin{proof}
    For any $\pi\in \mathcal{R}_{[s]}$, there exists a standard Levi subgroup $L\subset G$ and a depth-zero supercuspidal representation $(\sigma,V)$ of $L(F)$ such that $\pi$ is an irreducible subquotient of $i_P^G(\sigma)$, where $P$ is the standard parabolic subgroup containing $L$. By \Cref{lemma1} and \Cref{lemma3}, we can see that $j_L(\bar{\Theta}_0^L(\sigma))=\bar{\Theta}_0(\pi)=[s]$ and $i_P^G(\sigma)$ is irreducible. Therefore, $\pi=i_P^G(\sigma)$. Moreover, by \Cref{lemma2}, $\bar{\Theta}_0^L(\sigma)$ is strongly regular elliptic, and the fact that $\sigma$ is extra regular supercuspidal follows from \Cref{regular}. This proves the first part.

    For the second part, if $\pi\simeq i_P^G(\sigma)$ for a supercuspidal representation $\sigma$ of $L(F)$, then by \Cref{lemma1} and \Cref{lemma2}, $j_L(\bar{\Theta}_0^L(\sigma))=[s]$ and $\bar{\Theta}_0^L(\sigma)$ is strongly regular elliptic. Hence $\pi\in \bigcup_{[s_L]_L\in j_L^{-1}([s])_{\mathrm{ell}}}\{i_P^G(\sigma)\mid \sigma\in \mathcal{R}_{[s_L]_L}\}$. Conversely, choose $[s_L]_L\in j_L^{-1}([s])_{\mathrm{ell}}$. By \Cref{regular}, any $\sigma\in \mathcal{R}_{[s_L]_L}$ is depth-zero extra regular supercuspidal. \Cref{lemma1} and \Cref{lemma3} imply that $\pi:=i_P^G(\sigma)$ is irreducible and $\bar{\Theta}_0(\pi)=j_L(\bar{\Theta}_0^L(\sigma))=[s]$, and hence $\pi\in \mathcal{R}_{[s],L}$. The proof is complete.
\end{proof}

\subsubsection{Proof of \Cref{main2}}

In this subsection, we finish the proof of \Cref{main2}.

Let $\mathcal{L}$ be a set of standard representatives for the $G(F)$-conjugacy classes of Levi subgroups. For each $L\in \mathcal{L}$, fix the standard parabolic subgroup $P_L=LU_L$, and put
$$A_L=Z(L)^{\circ},\qquad W^G(L)=N_G(L)(F)/L(F).$$
Let $\hat{A}_L^u$ denote the set of unitary characters on $A_L(F)$, and for each $\zeta\in \hat{A}_L^u$, write
$$\mathrm{Irr}_2(L)_{\zeta}:=\{\sigma\in\mathrm{Irr}_2(L)\mid \omega_{\sigma}|_{A_L(F)}=\zeta\},$$
where $\mathrm{Irr}_2(L)$ denotes the set of discrete representations of $L(F)$. For any $f\in \mathcal{H}(G)$, by \cite[Lemma 4.2.6]{varma2024some}, we have
\begin{equation}\label{Plancherel}
    f(1)=\sum_{L\in \mathcal{L}} b_{G,L}\int_{\hat{A}_L^u}\sum_{\sigma\in \mathrm{Irr}_2(L)_{\zeta}}d_L(\sigma)\mu_{G,L}(\sigma)\Theta_{i_{P_L}^G(\sigma)}(f)\mathrm{d}\zeta
\end{equation}
where $b_{G,L}$ is a constant that depends only on $G$ and $L$ and $d_L(\sigma)$ denotes the formal degree of $\sigma$. Define
$$D_{[s]}(f):=(\xi^0(1_{[s]})*f)(1).$$
Following \Cref{section5}, we only need to prove $D_{[s]}(f)=0$ for any unstable function $f\in \mathcal{H}(G)$.

We have the following lemma:
\begin{lemma}\label{no discrete}
    Let $L\subset G$ be a standard Levi subgroup, and $\sigma\in \mathrm{Irr}_2(L)$. Suppose that $i_{P_L}^G(\sigma)$ has an irreducible subquotient $\pi\in \mathcal{R}_{[s]}$, where $[s]$ is strongly regular. Then $\sigma$ is depth-zero supercuspidal. Moreover,
    $$j_L(\bar{\Theta}_0^L(\sigma))=[s]$$
    and $i_{P_L}^G(\sigma)$ is irreducible.
\end{lemma}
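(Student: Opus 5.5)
The plan is to pass from the discrete series $\sigma$ of $L(F)$ to its supercuspidal support, and then apply the results already established for supercuspidal data (Lemmas \ref{lemma1}--\ref{lemma3} and \Cref{R_s non elliptic}).

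\textbf{Step 1: reduce to the supercuspidal support.} Let $(M,\tau)$ be the supercuspidal support of $\sigma$, with $M\subset L$ a standard Levi. Then $\sigma$ is a subquotient of $i_{P_M\cap L}^L(\tau)$. By transitivity of parabolic induction, $\pi$ is an irreducible subquotient of $i_{P_M}^G(\tau)$, so $(M,\tau)$ is also the supercuspidal support of $\pi$. Since $\pi$ has depth zero, \cite[Theorem 5.2(1)]{moy1996jacquet} shows that $\tau$ has depth zero, and the same theorem applied inside $L$ shows that $\sigma$ has depth zero.

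\textbf{Step 2: apply the supercuspidal results to $(M,\tau)$.} By \Cref{lemma1}, $j_M(\bar{\Theta}_0^M(\tau))=\bar{\Theta}_0(\pi)=[s]$. By \Cref{lemma3} applied with $M$ in place of $L$, we get $W_\tau=1$ and $i_{P_M}^G(\tau)$ is irreducible. Since $i_{P_M\cap L}^L(\tau)$ is a subquotient of $i_{P_M}^G(\tau)$ in the Grothendieck group, it is irreducible as well. Alternatively, one can run \Cref{lemma3} inside $L$: the group $W^L_\tau\subset W^G_\tau$ is trivial, and \cite{roche2002parabolic} gives irreducibility. Hence $\sigma\simeq i_{P_M\cap L}^L(\tau)$.

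\textbf{Step 3: show $M=L$.} This is the main point. An irreducible induced representation from a supercuspidal of a proper Levi $M\subsetneq L$ is never square-integrable. To see this, use Casselman's criterion: the Jacquet module of $i^L(\tau)$ along $P_M\cap L$ contains $\tau$ and, by the geometric lemma, its Weyl conjugates. Since $\tau$ has unitary central character up to twist, the exponents of this Jacquet module lie on the boundary rather than in the strictly positive cone, so the criterion fails.

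Concretely, write $\tau=\tau_0\otimes\chi$ with $\tau_0$ unitary and $\chi$ unramified. If $\chi$ is not unitary, replace $P_M$ by a parabolic for which the exponent is nonpositive. The Jacquet module along some parabolic with Levi $M$ then has central exponent $\operatorname{Re}\chi$ on $A_M/A_L$. For square-integrability, Casselman's criterion requires this exponent to be strictly positive for \emph{every} parabolic. The geometric lemma produces the exponents $w\operatorname{Re}\chi$ for $w$ ranging over $W^L(M)$, which contains $-1$ on $\mathfrak a_M/\mathfrak a_L$ up to a Weyl element when $M$ is a proper Levi. These exponents cannot all be strictly positive, and when they vanish the criterion fails outright. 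Hence $\sigma$ is not discrete, a contradiction, so $M=L$.

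This is the step I expect to be delicate, and I would cite the standard fact, for instance from Bernstein--Zelevinsky/Casselman, that irreducible full inductions of supercuspidals from proper Levis are not square-integrable. An alternative that avoids the fact is to note that $W_\tau^L=1$ forces the Harish-Chandra $\mu$-function of $\tau$ to have no poles. By the Harish-Chandra/Silberger characterization, discrete series subquotients of $i^L(\tau)$ exist only at poles of $\mu$, so no such subquotient exists.

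\textbf{Conclusion.} With $M=L$, $\sigma=\tau$ is depth-zero supercuspidal, $j_L(\bar{\Theta}_0^L(\sigma))=[s]$ by \Cref{lemma1}, and $i_{P_L}^G(\sigma)$ is irreducible by \Cref{lemma3}.
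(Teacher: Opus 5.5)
Your proposal follows the paper's proof essentially step for step. You pass to the supercuspidal support $(M,\tau)$ of $\sigma$, use transitivity and \Cref{lemma3} (with \Cref{lemma1} supplying its hypothesis) to get $\sigma\simeq i^L_{P_M\cap L}(\tau)$ irreducible, and conclude $M=L$ because an irreducible full induction from a proper Levi is never discrete, which the paper simply asserts.

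Two cautions on your write-up. First, in the Casselman sketch, $N_L(M)(F)/M(F)$ need not contain an element acting by $-1$ on $\mathfrak a_M/\mathfrak a_L$ (it is trivial for $\GL_2\times\GL_1\subset\GL_3$). The argument that works is the other one you indicate: by irreducibility, $\sigma\simeq i^L_{P'}(\tau)$ for a parabolic $P'$ with Levi $M$ relative to which the exponent of $\tau$ is not strictly positive. Second, irreducibility of $i^L(\tau)$ should be deduced from exactness and faithfulness of $i^G_{P_L}$, or from $W^L_\tau\subset W_\tau=1$, not from a Grothendieck-group comparison between representations of different groups.
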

\begin{proof}
    Choose a standard Levi subgroup $M\subset L$ and a supercuspidal representation $\rho$ of $M(F)$, such that $\sigma$ is an irreducible subquotient of $i_{P_M^L}^L(\rho)$, where $P_M^L$ denotes the standard parabolic subgroup of $L$ containing $M$. By the transitivity of normalized induction, we can see that $\pi$ is an irreducible subquotient of $i_{P_M}^G(\rho)$. By \Cref{lemma3}, we can see that $\pi=i_{P_M}^G(\rho)$ and $\sigma=i_{P_M^L}^L(\rho)$. Since a full induction from a proper Levi cannot be discrete, we deduce that $M=L$, and $\pi=i_P^G(\sigma)\in \mathcal{R}_{[s],L}$. The proof is complete.
\end{proof}

Choose a hyperspecial maximal compact subgroup $K\subset G(F)$ in good position with the Borel subgroup $B$ that we fix. For any $f\in \mathcal{H}(G)$, we define the constant term $f_{P_L}$ as follows:
$$f_{P_L}(l)=\delta_{P_L}^{\frac{1}{2}}(l)\int_{U_L}\int_Kf(k^{-1}luk)\mathrm{d}k\mathrm{d}u$$
where the Haar measure $\mathrm{d}k$ is chosen such that $\mathrm{vol}_{\mathrm{d}k}(K)=1$ and the Haar measure $\mathrm{d}u$ is chosen such that $\mathrm{vol}_{\mathrm{d}u}(K\cap U_L)=1$. We have the following lemma:
\begin{lemma}\label{lemma4}
    The map $f\mapsto f_{P_L}$ sends unstable functions on $G(F)$ to unstable functions on $L(F)$.
\end{lemma}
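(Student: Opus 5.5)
To prove \Cref{lemma4}, I will express the stable orbital integrals of $f_{P_L}$ at strongly regular semisimple elements of $L(F)$ in terms of the stable orbital integrals of $f$ on $G(F)$. Recall that $f$ is unstable if $SO_\gamma(f)=0$ for every strongly regular semisimple $\gamma\in G(F)$, where
\[
SO_\gamma(f)=\sum_{\gamma'}O_{\gamma'}(f)
\]
runs over the $G(F)$-conjugacy classes $\gamma'$ inside the stable class of $\gamma$, with compatible measures on the centralizers. The basic tool is the classical descent formula for orbital integrals (Harish-Chandra, van Dijk). For $\gamma\in L(F)$ strongly regular semisimple in $G$, and with the normalization of $\mathrm{d}k,\mathrm{d}u$ fixed above, it reads
\[
O^L_\gamma(f_{P_L})=|D_G(\gamma)|^{1/2}|D_L(\gamma)|^{-1/2}\,O^G_\gamma(f)
\]
(up to the sign convention in the $\delta_{P_L}^{1/2}$ normalization). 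I will derive it in the standard way. First use $G(F)=P_L(F)K$ to write the $G$-orbital integral as an iterated integral over $K$, $U_L$ and $L_\gamma\backslash L$. Then change variables $u\mapsto \gamma^{-1}u^{-1}\gamma u$ on $U_L(F)$; its Jacobian is $|\det(1-\mathrm{Ad}(\gamma^{-1}))|_{\mathfrak u_L}|$, which combines with $\delta_{P_L}^{1/2}(\gamma)$ to give the Weyl discriminant ratio.

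Next I pass from rational to stable orbital integrals. Let $\gamma\in L(F)$ be strongly regular in $G$, and let $\gamma_1,\dots,\gamma_k$ be representatives of the $L(F)$-classes in the $L$-stable class of $\gamma$. These are classified by $\ker\bigl(H^1(F,T_\gamma)\to H^1(F,L)\bigr)$, where $T_\gamma$ is the centralizer torus, common to $L$ and $G$. I will use two facts. The discriminant factor depends only on the stable class. Also, $H^1(F,L)\to H^1(F,G)$ is injective for a Levi subgroup of the quasi-split (indeed split) group $G$, so
\[
\ker\bigl(H^1(F,T_\gamma)\to H^1(F,L)\bigr)=\ker\bigl(H^1(F,T_\gamma)\to H^1(F,G)\bigr).
\]
From these, the $L(F)$-classes in the $L$-stable class of $\gamma$ map bijectively onto the $G(F)$-classes in the $G$-stable class of $\gamma$ that meet $L(F)$. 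In fact every $G(F)$-class in the $G$-stable class of $\gamma$ meets $L(F)$, since its cocycle comes from $T_\gamma$.

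Summing the descent formula over these classes then gives
\[
SO^L_\gamma(f_{P_L})=|D_G(\gamma)|^{1/2}|D_L(\gamma)|^{-1/2}\,SO^G_\gamma(f).
\]
So if $f$ is unstable, $SO^L_\gamma(f_{P_L})=0$ for all $\gamma\in L(F)$ that are regular in $G$. The map $f\mapsto f_{P_L}$ lands in $\mathcal H(L)$ by compactness of $K$ and the properness of $U_L\times K\to G$ on supports.

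The remaining points are as follows. First, elements strongly regular in $L$ but not in $G$ form a set of measure zero, and stability on $L$ can be tested on a dense open set of $G$-regular elements: by local constancy of stable orbital integrals near regular points, or via Harish-Chandra's germ/density results. Second, the measures on $T_\gamma(F)$ must be matched consistently between $L$ and $G$.

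I expect the main obstacle to be the cohomological bijection of classes. Concretely, this means checking that the map on $H^1$ is injective and that the embedding of $T_\gamma$ in $L$ and in $G$ gives compatible transfer factors, which here are trivial because no endoscopy is involved. I would first try citing Kottwitz's result on $H^1$ of Levi subgroups, which says $H^1(F,L)\to H^1(F,G)$ is injective for Levi subgroups of quasi-split groups. Failing that, I would argue directly for split $G$ via the Levi decomposition and Hilbert 90 type vanishing for $H^1(F,P_L)=H^1(F,L)$.
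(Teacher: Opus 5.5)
Your proposal is correct. For $\gamma\in L(F)$ strongly regular in $G$ it matches the paper: the descent formula $\operatorname{O}^L_\gamma(f_{P_L})=d_{G/L}(\gamma)^{1/2}\operatorname{O}^G_\gamma(f)$ (the paper cites it from Laumon; your factor $|D_G(\gamma)|^{1/2}|D_L(\gamma)|^{-1/2}$ is the same quantity), combined with $D_L(T)=D_G(T)$ coming from the triviality of $\ker[H^1(F,L)\to H^1(F,G)]$.

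You genuinely diverge in the remaining case, where $\gamma$ is strongly regular in $L$ but not in $G$. You use two facts: $\gamma\mapsto\operatorname{SO}^L_\gamma(f_{P_L})$ is locally constant on the strongly regular set of $L(F)$, and $G$-strongly-regular elements are dense in it. This works. Note that density, not the measure-zero property you mention first, is what does the work. Local constancy must also be read with the measure on $T(F)$ transported to nearby points of $T(F)$.

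The paper instead uses an exact, elementary trick. Choose a compact open $J\subset L(F)$ with $f_{P_L}$ left $J$-invariant. Then find $z\in A_L(F)\cap J$ with $z\gamma$ strongly regular in $G$. This is possible because each locus $Z_w=\{z\in A_L : w(z\gamma)=z\gamma\}$ with $w\neq 1$ is proper and closed, using $Z_G(A_L)=L$ and the $L$-strong regularity of $\gamma$, while $A_L(F)\cap J$ is Zariski dense. Central translation by $z$ identifies the rational classes and centralizers in the two stable classes. It therefore gives $\operatorname{SO}^L_\gamma(f_{P_L})=\operatorname{SO}^L_{z\gamma}(f_{P_L})$ on the nose, and the first case applies.

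The paper's route needs only the invariance of $f_{P_L}$ and avoids Harish-Chandra's local constancy theorem and any measure bookkeeping on varying centralizers. Your route is more general: it shows that any $h\in\mathcal H(L)$ whose stable orbital integrals vanish on a dense subset of the regular set is unstable, without using the central torus $A_L$.
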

\begin{proof}
    Choose an arbitrary unstable function $f\in \mathcal{H}(G)$. Suppose $\gamma\in L(F)$ is strongly regular in $G$. Then we have
    $$T:=G_{\gamma}=L_{\gamma}$$
    is a maximal torus in $L$. Put $d_{G/L}(\gamma)=|\operatorname{det}(1-\operatorname{Ad}\gamma\;;\;\mathfrak{g}/\mathfrak{l})|$. Then by \cite[Proposition 4.4.9]{laumon1996cohomology}, we have
    $$\operatorname{O}_{\gamma}^L(f_{P_L})=d_{G/L}(\gamma)^{1/2}\operatorname{O}_{\gamma}^G(f).$$

    For $H=L, G$, the rational conjugacy classes within the stable conjugacy class of $\gamma$ is parametrized by
    $$D_H(T):=\mathrm{ker}[H^1(F,T)\rightarrow H^1(F,H)].$$
    Note that $D_L(T)$ is naturally embedded in $D_G(T)$. By \cite[Equation 4.1.1]{haines2009base}, we have $\mathrm{ker}[H^1(F,L)\rightarrow H^1(F,G)]=\{1\}$. Therefore, $D_L(T)=D_G(T)$ and 
    $$\operatorname{SO}_{\gamma}^L(f_{P_L})=d_{G/L}(\gamma)^{1/2}\operatorname{SO}_{\gamma}^G(f)=0.$$

    Next we assume $\gamma\in L(F)$ is strongly regular in $L$ but not in $G$. Put $T=L_{\gamma}$, and let $W(G,T)=N_G(T)(\bar{F})/T(\bar{F})$ denotes the absolute Weyl group. Choose a compact open subgroup $J\subset L(F)$ such that $f_{P_L}$ is left $J$-invariant. Let $A_L$ be the maximal $F$-split torus in $Z(L)$. We have 
    $$Z(L)\subset T,\qquad L=Z_G(A_L).$$

    For every $1\neq w\in W(G,T)$, consider
    $$Z_w:=\{z\in A_{L,\bar{F}}:w(z\gamma)=z\gamma\}.$$
    This is a Zariski closed subset of $A_{L,\bar{F}}$.

    We claim that $Z_w$ is proper. Otherwise, setting $z=1$ gives $w(\gamma)=\gamma$. Substituting this back shows that $w$ fixes $A_L$ pointwise. A representative $n\in N_G(T)(\bar{F})$ of $w$ therefore belongs to $Z_G(A_L)(\bar{F})=L(\bar{F})$. This contradicts the strong regularity of $\gamma$ in $L$.

    The set $A_L(F)\cap J$ is Zariski dense in $A_{L,\bar{F}}$. Since $A_{L,\bar{F}}$ is irreducible, the finite union $\bigcup_{1\neq w\in W(G,T)}Z_w$ cannot contain $A_L(F)\cap J$. Therefore, we can always find $z\in A_L(F)\cap J$ such that $z\gamma$ is strongly regular in $G$.

    Because $z$ is central in $L(F)$, multiplication by $z$ gives a bijection between the rational classes in the stable $L$-classes of $\gamma$ and $z\gamma$. Their corresponding centralizers are identical. Furthermore, $f_{P_L}(zl)=f_{P_L}(l)$ for any $l\in L(F)$. Therefore, $$\operatorname{SO}_{\gamma}^L(f_{P_L})=\operatorname{SO}_{z\gamma}^L(f_{P_L})=0$$
    by the first case. In conclusion, for any strongly regular semisimple element $\gamma\in L(F)$, 
    $$\operatorname{SO}_{\gamma}^L(f_{P_L})=0.$$
    This shows that $f_{P_L}$ is unstable. The proof is complete.
\end{proof}

By \cite[Theorem 2]{van1972computation}, $\Theta_{i_{P_L}^G(\sigma)}(f)=\Theta_{\sigma}(f_{P_L})$. Therefore, for the function $\xi^0(1_{[s]})*f$, we can rewrite \Cref{Plancherel} as
\begin{equation}
    (\xi^0(1_{[s]})*f)(1)=\sum_{L\in \mathcal{L}} b_{G,L}\int_{\hat{A}_L^u}\sum_{\sigma\in \mathrm{Irr}_2(L)_{\zeta}}d_L(\sigma)\mu_{G,L}(\sigma)\xi^0(1_{[s]})(i_{P_L}^G(\sigma))\Theta_{\sigma}(f_{P_L})\mathrm{d}\zeta.
\end{equation}
By \Cref{R_s non elliptic} and \Cref{no discrete}, the set of representations $\sigma\in \mathrm{Irr}_2(L)_{\zeta}$ such that $\xi^0(1_{[s]})(i_{P_L}^G(\sigma))\neq 0$ is the same as $\bigsqcup_{[s_L]_L\in j_L^{-1}([s])_{\mathrm{ell}}}\mathcal{R}_{[s_L]_L, \zeta}$. Therefore, if we fix a standard Levi subgroup $L\in \mathcal{L}$, then
\begin{align}\label{1}
    b_{G,L}\int_{\hat{A}_L^u}\sum_{\sigma\in \mathrm{Irr}_2(L)_{\zeta}}&d_L(\sigma)\mu_{G,L}(\sigma)\xi^0(1_{[s]})(i_{P_L}^G(\sigma))  \Theta_{\sigma}(f_{P_L})\mathrm{d}\zeta= \notag \\ &b_{G,L}\sum_{[s_L]_L\in j^{-1}_L([s])_{\mathrm{ell}}}\int_{\hat{A}_L^u}\sum_{\sigma\in \mathcal{R}_{[s_L]_L,\zeta}}d_L(\sigma)\mu_{G,L}(\sigma)\Theta_{\sigma}(f_{P_L})\mathrm{d}\zeta.
\end{align}
By \cite[Corollary 4.2.13(1)]{varma2024some}, $\mu_{G,L}$ is constant on a Debacker--Reeder $L$-packet. The proof in \Cref{section5}, together with \Cref{lemma4}, shows that 
$$\int_{\hat{A}_L^u}\sum_{\sigma\in \mathcal{R}_{[s_L]_L,\zeta}}d_L(\sigma)\mu_{G,L}(\sigma)\Theta_{\sigma}(f_{P_L})\mathrm{d}\zeta=0$$
for any unstable function $f\in \mathcal{H}(G)$, which implies that \Cref{1}$=0$. Let $L$ vary. We get
$$D_{[s]}(f)=0$$
for any unstable function $f$. The proof is complete.

\section{The positive depth case}\label{sec:Positivedepths}

In this section we prove the positive depth cases of \cite[Conjecture 1.1]{bhattacharya2025description}. The proof in the positive depth cases is uniform, and is motivated by the approach for positive depth projectors in \cite{bkv2016depthrprojector}. We use a theorem of Waldspurger \cite{waldspurger1997lemme} to reduce the question of stability of distributions to a question of stability of certain subsets in the dual of the Lie algebra $\fg^*$ in the sense of \cite[8.3(d)]{bkv2016depthrprojector}.  

Fix a uniformizer $\varpi \in \mf O$. Let $\psi : F\rightarrow\C^\times $ be an additive character, trivial on $\varpi\mf O$ but non-trivial on $\mf O$. This fixes a non-trivial character $\tilde\psi:\F_q\rightarrow \C^\times $ and we use it for defining Fourier transforms for Moy-Prasad quotients (\cite[Section 4.2]{bhattacharya2025description}) and constructing stable functions on them.

\subsection{Review of a few constructions and definitions}
We briefly review some constructions and definitions following \cite{bhattacharya2025description}. Let $r \in \frac{1}{m}\Z_{\geq 0}$  with $p \nmid m$. Similar to \cite{bhattacharya2025description}, for all such $m$, we fix $\nu_m \in \mf O_{F^t}$ with $\mathrm{v}(\nu_m)=\frac{1}{m}$. We fix a chamber $\mcC \subset \mathcal{A}(T,F)\subset  \B(G,F)$. Let $\B_m$ ($\mc A_m, \bar \mcC_m$) be the building (resp. apartment, closure of the chamber) with refined simplicial structure as explained in the introduction and \cite[Section 2.4]{bhattacharya2025description}. Let $[\B_m]$ ($[\mc A_m], [\bar{\mcC}_m]$ denote the set of refined facets (or open polysimplices) in $\B_m$ (resp. $\mc A_m, \bar{\mcC}_m$)
 Let $E$ be a totally ramified extension of $F^{nr}$ of ramification degree $m$. For $x \in \B(G,F)$, let $\mbL_x=(\G_E)_x$ denote the $\bar \F_q$ group given by $\mbL_x(\bar \F_q)= G(E)_{x,0}/G(E)_{x,0+}$. Similarly let $\bl_{x,r}=(\bg_E)_{x,r}:=\fg(E)_{x,r}/\fg(E)_{x,r+}$, which is a $\bar\F_q$-vector space. Note that there exists an isomorphism $\bl_x=\mathrm{Lie}(\mbL_x)\cong \bl_{x,r}$ which depends on choice of $\nu_m$. Similarly we can define $\bl^*_{x,r}=\frac{\fg(E)^*_{x,-r}}{\fg(E)^*_{x,-r+}}$. These $\bar\F_q$-spaces have an $\F_q$-structure depending on choice of $\nu_m$ (for example check \cite[Section 4.3]{bhattacharya2025description}), and we make analogous definitions for $\mbL_\sigma, \bl_\sigma$ etc. for $\sigma \in [\B_m]$.

 \paragraph{} For $\sigma \in [\mc A_m]$, we have maps of $\bar\F_q$-varieties which descend to maps over $\F_q$ (\cite[Lemma 4.8]{bhattacharya2025description}), and hence we have a map of $\F_q$-points
 \begin{equation}\label{stableGIT}
    \begin{tikzpicture}[baseline=(current  bounding  box.center)]
        \node (A) at (0,0) {$ \bg_{\sigma,r}^*$};
        \node (B) at (2,0) {$(\bg_E)_{\sigma,r}^*(\F_q) $};
        \node (D) at (4,0) {$\bl_\sigma^*(\F_q)$};
        \node (C) at (6,0) {$\bl_\sigma^*//\mbL_\sigma(\F_q) $};
        \node (E) at (3,-2) {$\bg_{\sigma,r}^*//\G_\sigma(\F_q)$};
        \node (F) at (8.8,0) {$\bt^*//W_\sigma(\F_q)$};
        \draw[->] (A)--(B);
         \draw[->] (B)edge node[above] {$\simeq$}(D);
        \draw[->] (D)--(C);
         \draw[->] (A)--(E);
         \draw[->] (E)--(C);
          \draw[->] (C)edge node[above] {$\simeq$}(F);
    \end{tikzpicture}
\end{equation}
where the last horizontal isomorphism comes from Chevalley isomorphism and $W_\sigma$ is the absolute Weyl group of $\mbL_\sigma$, which is the finite Weyl group corresponding to the refined facet $\sigma$. For further details about the maps, check \cite[Section 4.3]{bhattacharya2025description}. The composition of the horizontal maps $\bg_{\sigma,r}^*\rightarrow \bt^*//W_\sigma(\F_q)$ is defined to be $\chi_{\sigma,r}$.

\paragraph{} The pairing $\langle\cdot,\cdot\rangle:\fg^*\times \fg\rightarrow F$ induces a $\G_\sigma(\F_q)$-invariant pairing 
\begin{align*}
  \frac{\fg^*_{\sigma,-r}}{\fg^*_{\sigma, -r+}} \times   \frac{\fg_{\sigma,r}}{\fg_{\sigma,r+}} &\rightarrow \F_q\\
  (X,Y) & \mapsto \langle X,Y\rangle \:\mathrm{mod}\:\varpi \mf O
\end{align*}
of $\F_q$-vector spaces. Let $C(\bg_{\sigma,r})$ denote the space of $\G_\sigma(\F_q)$-invariant functions on $\bg_{\sigma,r}$ equipped with the convolution product
\begin{equation}
    f*f'(X)=\left|\bg_{\sigma,r}\right|^{-1/2}\sum_{Y\in \bg_{\sigma,r}}f(X-Y)f'(Y).
\end{equation}
and $C\left(\bg_{\sigma,r}^*\right)$ denote the space of  $\G_\sigma(\F_q)$-invariant functions on $\bg_{\sigma,r}^*$ equipped with point-wise multiplication. We have the Fourier transform $\mcF_{\bg_{\sigma,r}}:C(\bg_{\sigma,r}) \rightarrow C\left(\bg_{\sigma,r}^*\right)$ given by
\begin{equation*}\label{fourierdefnlie}
    \mcF_{\bg_{\sigma,r}}(f)(X^*)=\left|\bg_{\sigma,r}\right|^{-1/2}\sum_{Y\in \bg_{\sigma,r}^F}\Tilde\psi( (X^*,Y))f(Y).
\end{equation*}
and similarly $\mcF_{\bg_{\sigma,r}^*}:C\left(\bg_{\sigma,r}^*\right) \longrightarrow C(\bg_{\sigma,r}) $ following \cite[Section 4.2]{bhattacharya2025description}. We have the following definition
\begin{defn}[Definition 4.11, \cite{bhattacharya2025description}]
     We define $f \in C(\bg_{\sigma,r})$ to be stable if $\mcF_{\bg_{\sigma,r}}(f):\bg_{\sigma,r}^* \rightarrow \C$ factors through 
    \[
    \bg_{\sigma,r}^* \xlongrightarrow{\chi_{\sigma,r}} \bt^*//W_\sigma(\F_q) \longrightarrow \C.
    \]
   We use $C^{\mathrm{st}}(\bg_{\sigma,r})$ to denote stable functions on $\bg_{\sigma,r}^F$. Using properties of Fourier transforms, we immediately see that $C^{\mathrm{st}}(\bg_{\sigma,r}) \subset C(\bg_{\sigma,r})$  forms a subalgebra with unit $|\bg_{\sigma,r}|^{1/2}\mathbbm{1}_0$. 
\end{defn}
Given $\varepsilon \in \C[\bt^*//W_\sigma(\F_q)]$, let $f_\varepsilon:= \mcF_{\bg_{\sigma,r}^*}(\chi_{\sigma,r}^*(\varepsilon)^-)= \mcF_{\bg_{\sigma,r}^*}^{-1}(\chi_{\sigma,r}^*(\varepsilon))$. From \cite[Prop. 4.13]{bhattacharya2025description}, we know that $f_\varepsilon \in C^{\mathrm{st}}(\bg_{\sigma,r})$ and the map 
\begin{equation*}
    \rho_{\sigma,r}: \C[\bt^*//W_\sigma(\F_q)] \longrightarrow C^{\mathrm{st}}(\bg_{\sigma,r})
\end{equation*}
is an algebra homomorphism. 

\paragraph{} Let $\sigma \in [\bar \mcC_m]$. Note that $W_\sigma $ agrees with the image of $N_{G(E)_{\sigma,0}}(T)$ in $W$ (see \cite{debtotallyram}, Lemma 7.2.1), and hence there is an embedding $W_\sigma \hookrightarrow W$. Thus, we have a map 
$t_{\sigma} : \bt^*//W_\sigma(\F_q) \rightarrow \bt^*//W(\F_q)$
and hence a map 
\begin{equation*}
    t_{\sigma}^*:\C[\bt^*//W(\F_q)]\longrightarrow\C[\bt^*//W_\sigma(\F_q)]
\end{equation*}

For $\overline{(\nu, \vartheta)} \in \DL_r$ (as defined \cite[Section 6.1]{bhattacharya2025description}, we have $\vartheta \in \bt^*//W(\F_q)$ and since we have fixed $\nu_m$, we can work with $\vartheta$ and assume $\vartheta \in \DL_r$ without loss of generality. Further, we have algebra maps (depending on choice of $\nu_m$) $j_{\sigma,r}:\C[\bt^*//W(\F_q)] \rightarrow \mc M_\sigma^r$ in \cite[Section 5]{bhattacharya2025description} which are compatible with the inverse system maps between $\mc M_\sigma^r$, and we give more details about these maps in the next section. The maps $j_{\sigma,r}$ induce maps 
\begin{equation*}
    \xi^r:\C[\bt^*//W(\F_q)] \longrightarrow\mf Z^r(G)
\end{equation*}
via the inverse limit description of $\mf Z^r(G)$, and $E_{r,\vartheta} =\xi^r(1_\vartheta)$ are projector elements in the center such that 
  \begin{equation*}
        E_{r,\vartheta}|_\pi=\begin{cases}
        \mathrm{Id}, & \text{ if  $\pi\in \Irr(G)_r$ and }   \bar\Theta_r(\pi)=\vartheta\\
        0, & \text{ otherwise}
    \end{cases}
    \end{equation*}

In this section, we prove the following theorem 
\begin{thm}
    Let $r \in \Z_{(p)}\cap\Q_{>0}$ and $\xi^r$ be as above. We have $\mathrm{Im}(\xi^r)\subset \mf Z^{\mathrm{st}}(G)\cap \mf Z^r(G)$.
\end{thm}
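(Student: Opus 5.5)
Since $\xi^r$ is linear and $\{1_\vartheta\}$ spans $\C[\bt^*//W(\F_q)]$, it suffices to show that each projector $E_{r,\vartheta}=\xi^r(1_\vartheta)$ is stable. The plan follows the strategy of \cite{bkv2016depthrprojector}: transfer the question to the Lie algebra, identify the Lie algebra analogue of $E_{r,\vartheta}$ as the Fourier transform of the indicator of a stable open compact-modulo-nothing subset of $\fg^*$ (a ``dual blob''), and then use Waldspurger's theorem \cite{waldspurger1997lemme} that Fourier transform preserves stability.

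\textbf{Step 1 (Lie algebra analogue).} For each $\sigma\in[\bar\mcC_m]$, the element $j_{\sigma,r}(1_\vartheta)\in\M^r_\sigma$ is, by construction in \cite[Section 5]{bhattacharya2025description}, the pullback along the Moy--Prasad isomorphism $G_{\sigma,r}/G_{\sigma,r+}\cong\bg_{\sigma,r}$ of the stable function $\rho_{\sigma,r}(t_\sigma^*1_\vartheta)$, extended by zero. Thus $j_{\sigma,r}(1_\vartheta)$ is supported on $G_{\sigma,r}$. I define a distribution $\mc E_{r,\vartheta}$ on $\fg$ by the same inverse-limit recipe, using $\fg_{\sigma,r}/\fg_{\sigma,r+}$ in place of the group quotients. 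Concretely, $\mc E_{r,\vartheta}$ is the invariant distribution whose restriction to each $\fg_{x,r}$-level gives $\mcF^{-1}_{\bg^*_{x,r}}(\chi_{x,r}^*t_x^*1_\vartheta)$.

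\textbf{Step 2 (dual blob).} Set
\[
\fg^*_{r,\vartheta}:=\bigcup_{x\in\B(G,F)}\bigl\{X^*\in\fg^*_{x,-r}:\ \chi_{x,r}(\bar X^*)\in t_x^{-1}(\vartheta)\bigr\}.
\]
I then verify three properties. First, the definition is independent of the choice of $x$ for a given $X^*$; this uses compatibility of $\chi_{\sigma,r}$ under $\sigma'\preceq\sigma$ and \cite[Lemma 4.8]{bhattacharya2025description}. Second, the set is open, closed and $G(F)$-invariant. Third, it is stable in the sense of \cite[8.3(d)]{bkv2016depthrprojector}, i.e.\ closed under stable conjugacy of regular semisimple elements. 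For stability, I argue that if $X^*$ and ${}^gX^*$ with $g\in G(\bar F)$ are both in $\fg^*$, then after passing to a tame extension, the reduction of ${}^gX^*$ at a suitable point has the same image in $\bt^*//W$. This follows because $\chi_{x,r}$ is given by the Chevalley map applied to $\nu_m^{m r}$-rescaled invariant polynomials of $X^*$, which are invariant under $G(\bar F)$-conjugation.

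Next I show $\mcF(1_{\fg^*_{r,\vartheta}})=\mc E_{r,\vartheta}$ as distributions. This is a local computation: I test against $1_{Y+\fg_{x,r+}}$ and apply the finite Fourier inversion on $\bg_{x,r}$, together with the fact that $\fg^*_{x,-r+}$ is dual to $\fg_{x,r+}$. Waldspurger's theorem then gives stability of $\mc E_{r,\vartheta}$. The case $\vartheta=\bar0^r$ recovers the blob $\fg^*_{r+}$-type set of \cite{bkv2016depthrprojector}, which serves as a consistency check.

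\textbf{Step 3 (back to the group).} Using a quasi-logarithm $\mathcal L:G(F)_{r}\to\fg_r$ that is $G(F)$-equivariant, preserves stable conjugacy, and induces the Moy--Prasad isomorphisms at every $x$ and depth $r$ (this is where the residue characteristic hypotheses enter), I identify $E_{r,\vartheta}=\mathcal L^*\mc E_{r,\vartheta}$. Both distributions are supported on the topologically unipotent set of depth $\ge r$, and their local pieces agree by Step 1. Then for an unstable $f$, the pushforward $\mathcal L_*(f|_{G_r})$ is unstable on $\fg$, so $E_{r,\vartheta}(f)=0$.

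I expect the main obstacle to be the support claim together with this compatibility. One must show that $E_{r,\vartheta}$ really is supported in $G(F)_r=\bigcup_x G_{x,r}$, and that the truncation $f\mapsto f\cdot 1_{G_r}$ preserves instability. For the latter I would use that $G(F)_r$ is closed under stable conjugacy, following \cite{bkv2016depthrprojector}.
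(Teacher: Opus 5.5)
Your architecture is the paper's: the dual blob $\fg^*_{-r,\vartheta}$, $\mc E_{r,\vartheta}=\mcF^{-1}(1_{\fg^*_{-r,\vartheta}})$ stable by Waldspurger's theorem, transfer by a quasi-logarithm, and truncation by the stable function $1_{G_r}$. But Step~3 has a genuine gap: ``their local pieces agree, hence $E_{r,\vartheta}=\mathcal L^*\mc E_{r,\vartheta}$'' does not follow.

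The quasi-logarithm is an $\Ad$-equivariant homeomorphism, not a homomorphism, so it does not intertwine convolution. Matching $E_{r,\vartheta}*\delta_{G_{\sigma,r+}}$ with $\mc E_{r,\vartheta}*\delta_{\fg_{\sigma,r+}}$ under $\mathcal L_{\sigma,r}$ only says that the two distributions agree on indicators of $G_{\sigma,r+}$-cosets contained in $G_{\sigma,r}$. Such indicators do not span $C_c^\infty(G_r)$, since they lie in the depth-$\le r$ summand of $\mc H(G)$. On the group side, $E_{r,\vartheta}$ is pinned down by these values only because it kills the depth-$>r$ summand. That decomposition is defined through convolution, and $\mathcal L$ does not respect it. So you have not shown $E_{r,\vartheta}(f)=\mc E_{r,\vartheta}(\mathcal L_!f)$ for a general $f\in C_c^\infty(G_r)$.

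The missing ingredient is the Euler--Poincar\'e presentation. By the proof of \cite[Theorem 1.1]{bhattacharya2025description}, $E_{r,\vartheta}=\lim_{\Sigma\in\Theta_m}E^\Sigma_{r,\vartheta}$ with $E^\Sigma_{r,\vartheta}=\sum_{\sigma\in\Sigma}(-1)^{\dim\sigma}E_{\sigma,r,\vartheta}\,d\mu$. This is a limit, stabilizing on each test function, of honest measures whose terms are supported on $G_{\sigma,r}$ and constant on $G_{\sigma,r+}$-cosets. Since $\mathcal L_{\sigma,r}$ carries these cosets to $\fg_{\sigma,r+}$-cosets and $\tilde\mu=\mathcal L_!\mu$, the pushforward acts term by term, giving $\mathcal L_!(E^\Sigma_{r,\vartheta})=\mc E^\Sigma_{r,\vartheta}$.

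You then need $\mcF(\mc E^\Sigma_{r,\vartheta})=1_{\fg^*_{\Sigma,-r,\vartheta}}$, followed by the limit over $\Sigma$. Beyond your finite Fourier computation, this uses the inclusion--exclusion identity $1_{\fg^*_{\Sigma,-r}}=\sum_{\sigma\in\Sigma}(-1)^{\dim\sigma}1_{\fg^*_{\sigma,-r}}$ of \cite[Lemma 6.8]{bkv2016depthrprojector}, multiplied by $1_{\fg^*_{-r,\vartheta}}$. This is exactly where your facet-independence of $J_{\sigma,r}$ is used. It also gives, for free, the support statement you flagged as the main obstacle.

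Two smaller remarks. First, a quasi-logarithm of the kind you use is only available for $G^{sc}$ with $p$ very good \cite[Corollary 8.11]{bkv2016depthrprojector}. You therefore need the reduction through the central isogeny $G^{sc}\times Z(G)^0\to G$ of degree prime to $p$. Second, your invariant-polynomial argument for stability of the blob is a reasonable alternative to the paper's. The paper reduces stable conjugacy to $G(F^{nr})$-conjugacy using that $F^{nr}$ has cohomological dimension one, then uses unramified base-change invariance of Deligne--Lusztig parameters \cite[Lemma 24, Remark 26]{chen2025langlands}. Your route works provided you also invoke stability of $\fg^*_{-r}$ and check that $J_{\sigma,r}$ is the reduction of the rescaled Chevalley invariants.
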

As explained in \cite[Section 9]{bhattacharya2025description}, it is enough to show that the projectors $E_{r,\vartheta}$ for $\vartheta \in \DL_r$ are stable.

\subsection{Fourier Transforms, Quasi-logarithms and stability}

For any $\vartheta\in \DL_r$, we will construct an open, closed stable $\Ad^*(G(F))$-invariant subset $\fg^*_{-r,\vartheta} \subset \fg^*_{-r}$, where $\Ad^*$ denotes the co-adjoint action. For $ \sigma \in [\bar{\mcC}_m]$, we have  maps 
\begin{equation*}
   J_{\sigma,r}:  \fg^*_{\sigma,-r } \rightarrow \frac{ \fg^*_{\sigma,-r }}{ \fg^*_{\sigma,-r+}}\xrightarrow[]{\chi_{\sigma,r}}\bt^*//W_\sigma(\F_q)\xrightarrow{t_\sigma}\bt^*//W(\F_q)
\end{equation*}
with $\chi_{\sigma,r}$ and $t_\sigma$ as defined in previous section (see \cite[Section 5]{bhattacharya2025description} for more details).  Define $\fg^*_{\sigma ,-r,\vartheta}=J_{\sigma,r}^{-1}(\vartheta) $. For $\sigma' \in [\B_m]$ such that $\sigma'=g\cdot \sigma$ for $g \in G(F)$ and $\sigma \in [\bar{\mcC}_m]$, we can define $\fg^*_{\sigma' ,-r,\vartheta}= \Ad^*(g)\fg^*_{\sigma ,-r,\vartheta}$ and this is well defined. 
Let $\fg^*_{-r,\vartheta} = \cup_{\sigma \in [\B_m]}\fg^*_{\sigma ,-r,\vartheta}$. Note that $\fg^*_{-r,\vartheta} = \cup_{\sigma \in [\bar{\mcC}_m]} \Ad^*(G(F))\fg^*_{\sigma ,-r,\vartheta}$ and hence it is $G(F)$-invariant by definition, and open. We prove that it is closed and stable. Note that for different $\vartheta,\vartheta'\in \mathrm{DL}_r$, $\mathfrak{g}^*_{-r,\vartheta}\cap  \mathfrak{g}^*_{-r,\vartheta'}=\varnothing$ by \cite[Lemma 36]{chen2025langlands}. Therefore, we have
$$\fg^*\setminus \fg^*_{-r,\vartheta}= (\fg^*\setminus \fg^*_{-r}) \bigcup_{\vartheta \neq \vartheta'} (\cup \fg^*_{-r,\vartheta'}).$$ 
It is shown in \cite[Lemma B.3]{bkv2016depthrprojector} that $\fg^*_{-r}$ is closed, and hence $\fg^*\setminus \fg^*_{-r,\vartheta}$ is open and $\fg^*_{-r,\vartheta}$ is closed.

\paragraph{} Let $\fg_{sr}^*\subset \fg^*$ be the set of strongly regular semisimple elements of $\fg^*$ (see \cite[Sec 8.1(b)]{bkv2016depthrprojector}). To show that $\fg^*_{-r,\vartheta}$ is stable, we need to show that for every $G(\bar F)$-conjugate $b,b'\in \fg^*_{sr}$ such that $b \in \fg^*_{-r,\vartheta}$, we have $b' \in \fg^*_{-r,\vartheta}$.
Since $b, b' $ are $G(\bar F)$-conjugate, and $F^{nr}$ is of cohomological dimension one, we can conclude that $b,b'$ are $G(F^{nr})$-conjugate, and hence $G(F^\flat)$-conjugate for some finite unramified extension $F^\flat /F$. \cite[Lemma 24 and Remark 26]{chen2025langlands} shows that the Deligne-Lusztig parameter attached to a Moy-Prasad type does not change on moving to a finite unramified extension, and hence $b' \in \fg^*_{-r,\vartheta}$. This shows that $\fg^*_{-r,\vartheta}$ is stable. Combining all the above arguments, we have the following proposition 
\begin{prop}
    For $r \in \Z_{(p)}\cap \Q_{>0}$ and $\vartheta \in \DL_r$, the subset $\fg^*_{-r,\vartheta}\subset \fg^*_{-r} \subset \fg^*$ is open, closed and stable. 
\end{prop}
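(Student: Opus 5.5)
The proof assembles the three properties in the order openness, closedness, stability, following the discussion preceding the statement. Throughout we work with the decomposition $\fg^*_{-r,\vartheta}=\bigcup_{\sigma\in[\bar\mcC_m]}\Ad^*(G(F))\fg^*_{\sigma,-r,\vartheta}$.

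\emph{Openness.} Each $\fg^*_{\sigma,-r,\vartheta}=J_{\sigma,r}^{-1}(\vartheta)$ is a union of cosets of the open lattice $\fg^*_{\sigma,-r+}$, because $J_{\sigma,r}$ factors through the finite quotient $\bg^*_{\sigma,r}$. Hence it is open. Its $\Ad^*(G(F))$-saturation is a union of open translates, so it is open, and a finite union of such saturations is open. We should also check that $\fg^*_{\sigma',-r,\vartheta}$ is well defined for $\sigma'=g\sigma$. This amounts to $\Ad^*(G_{\sigma,0})$-invariance of $J_{\sigma,r}$ together with $G(F)$-equivariance of the refined Moy--Prasad filtrations. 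The invariance holds because $\chi_{\sigma,r}$ is a GIT quotient map for $\G_\sigma$.

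\emph{Closedness.} By \cite[Lemma 36]{chen2025langlands}, the sets $\fg^*_{-r,\vartheta'}$ for distinct $\vartheta'\in\DL_r$ are pairwise disjoint. Their union over all $\vartheta'$ is $\fg^*_{-r}$: every element of $\fg^*_{-r}$ lies in some $\fg^*_{\sigma,-r}$ and therefore has some value under $J_{\sigma,r}$. Consequently
\[
\fg^*\setminus\fg^*_{-r,\vartheta}=(\fg^*\setminus\fg^*_{-r})\cup\bigcup_{\vartheta'\neq\vartheta}\fg^*_{-r,\vartheta'}.
\]
The set $\fg^*_{-r}$ is closed by \cite[Lemma B.3]{bkv2016depthrprojector}, and each $\fg^*_{-r,\vartheta'}$ is open by the first step. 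So the complement is open and $\fg^*_{-r,\vartheta}$ is closed.

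\emph{Stability.} This step carries the real content. We take $b\in\fg^*_{-r,\vartheta}\cap\fg^*_{sr}$ and a $G(\bar F)$-conjugate $b'\in\fg^*(F)$. We must show that $b'\in\fg^*_{-r,\vartheta}$.

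First, the transporter $\{g:\Ad^*(g)b=b'\}$ is a torsor under the torus $C_G(b)$. Steinberg's theorem gives $H^1(F^{nr},\cdot)=0$ for connected groups, so the torsor has an $F^{nr}$-point. That point is defined over some finite unramified extension $F^\flat/F$.

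Next, $b'$ lies in $\fg^*_{-r}$, because depth is stable under $G(\bar F)$-conjugacy. Hence $b'\in\fg^*_{\sigma',-r,\vartheta''}$ for some facet $\sigma'$ and some $\vartheta''$. Over $F^\flat$, the elements $b$ and $b'$ are rationally conjugate. By \cite[Lemma 24 and Remark 26]{chen2025langlands}, the Deligne--Lusztig parameter computed over $F$ agrees with the one computed over $F^\flat$. Over $F^\flat$ it is invariant under $G(F^\flat)$-conjugacy, by the well-definedness of the invariant (disjointness over $F^\flat$). It follows that $\vartheta''=\vartheta$.

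The main obstacle is verifying this compatibility. Specifically, we must check that the refined facet $\sigma$ of $\B_m(G,F)$ embeds in $\B_m(G,F^\flat)$ with the same filtration $\fg^*_{\sigma,-r}\otimes\mf O_{F^\flat}$. We must also check that $J_{\sigma,r}$ over $F^\flat$ restricts to $J_{\sigma,r}$ over $F$ under $\bt^*//W(\F_q)\hookrightarrow\bt^*//W(\F_{q^\flat})$, which uses the fact that $\nu_m$ is fixed. Both facts hold for unramified extensions by Galois descent of the building, and this is exactly what the cited lemma provides.
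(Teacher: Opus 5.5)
Your proposal is correct and follows essentially the same argument as the paper. Openness comes from each $\fg^*_{\sigma,-r,\vartheta}$ being a union of $\fg^*_{\sigma,-r+}$-cosets, closedness from pairwise disjointness of the $\fg^*_{-r,\vartheta'}$ together with closedness of $\fg^*_{-r}$, and stability from passing to a finite unramified extension $F^\flat$ (via vanishing of $H^1$ over $F^{nr}$) and using invariance of the Deligne--Lusztig parameter under unramified base change. You also make explicit two points the paper leaves implicit: the sets $\fg^*_{-r,\vartheta'}$ cover $\fg^*_{-r}$, and the conjugate $b'$ still lies in $\fg^*_{-r}$.
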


Let $\mc H(\fg)$ denote the Hecke algebra of compactly supported smooth measures on $\fg$ and $C^\infty_c(\fg)$ denote compactly supported smooth functions on $\fg$. We denote by $D(\fg)$ the set of distributions on $\fg$, which is the linear dual of $C^\infty_c(\fg)$ and $\widehat C(\fg)$ the set of generalized functions, which is the linear dual of $\mc H(\fg)$. Similar definitions can be made for $\fg^*$. Further let $D^G(\fg)$ denote the space of invariant distributions and $\widehat C^G(\fg)$ denote the space of invariant generalized functions (similarly for $\fg^*$). Following \cite[Sec 1.18]{bkv2016depthrprojector}, the fixed additive character $\psi:F \rightarrow \C^\times $ gives rise to a Fourier transform $\mc F: \mc H(\fg^*) \rightarrow C^\infty_c(\fg)$  given explicitly by $\mc F(h)(a)=\int_{\fg^*}\psi(\langle-,a\rangle)h$ for $h \in \mc H(\fg^*)$ and $a \in \fg$. By duality, this gives a Fourier transform $\mc F:D(\fg)\rightarrow \widehat C(\fg^*)$, which restricts to $\mc F:D^G(\fg)\rightarrow \widehat C^G(\fg^*)$ explicitly given by $\mc F(E)(h)=E(\mc F(h))$ for $E \in D^G(\fg)$ and $h \in \mc H(\fg^*)$.

\paragraph{Lie algebra analogue of the projector} We define the Lie algebra analogue of the center $Z_\fg\subset D^G(\fg)$ following \cite{bkv2016depthrprojector} to be the subspace of $E \in D^G(\fg)$ such that $E*h$ has compact support for all $h \in \mc H(\fg)$ or equivalently $\mc F(E)\in C^G(\fg^*)$ is locally constant. Define $\mc E_{r, \vartheta}= \mc F^{-1}(1_{\fg^*_{-r,\vartheta}})$, and $\mc E_{r, \vartheta} \in Z_{\fg}$ since we have shown that $\fg^*_{-r,\vartheta}$ is open and closed, and hence $1_{\fg^*_{-r,\vartheta}}$ is locally constant. 

\paragraph{Quasi-logarithms} Assume $G$ is semi-simple simply connected and $p$ is very good for $G$ in the sense of \cite[8.10]{bkv2016depthrprojector}. From \cite[Appendix C]{bkv2016depthrprojector}, we have an $\Ad\:G$-equivariant quasi-logarithm $\mcL$ defined over $\mf O$ which induces analytic isomorphisms $\mc L_r:G_r \rightarrow \fg_r$ and $\mc L_{x,r}: G_{x,r}\rightarrow \fg_{x,r}$ for $r >0$ and $x \in \B(G,F)$. Further, this logarithm also induces the Moy-Prasad isomorphisms 
\begin{equation*}
    \bar{\mc L}_{x,r}: \frac{G_{x,r}}{G_{x,r+}} \xrightarrow{\sim} \frac{\fg_{x,r}}{\fg_{x,r+}}
\end{equation*}
Hence, the following diagram commutes for $\sigma\in [\B_m]$
 \begin{equation*}
     \begin{tikzcd}
         G_{\sigma,r} \arrow{r}{\mc L_{\sigma,r}}\arrow{d}&\fg_{\sigma,r}\arrow{d} \\
        \frac{G_{\sigma,r}}{G_{\sigma,r+}}\arrow{r}{\bar{\mc L}_{\sigma,r}} & \frac{\fg_{\sigma,r}}{\fg_{\sigma,r+}}
     \end{tikzcd}
 \end{equation*}

\begin{rmk}
   Note that the $r$-logarithms $\mc L_r$ and $\mc L_{x,r}$ are analytic isomorphisms, not group theoretic. However, on Moy-Prasad quotients, it induces an isomorphism of vector spaces which is $\G_{x}$-equivariant since the $\Ad (G)$-equivariant quasi-logarithm $\mc L:G \rightarrow \fg $ in \cite{bkv2016depthrprojector} is defined over $\mf O$, has $\mc L(1)=0$ and the induced map on tangent spaces is identity. Since our group is split, the Moy-Prasad isomorphisms hold and the higher terms in the analytic expression vanishes in the quotient which gives compatibility with group operations without any additional restriction on $p$. 
\end{rmk}
Following \cite[Section 5]{bhattacharya2025description}, we have a natural inclusion map $\iota_{\sigma,r}: C(\bg_{\sigma,r}) \rightarrow C_c^\infty\left(\frac{G(F)/G_{\sigma,r+}}{G_{\sigma,0}}\right)=\M^r_\sigma$ sending $ C(\bg_{\sigma,r}) \ni f: \bg_{\sigma,r} \rightarrow\C$ to $\iota_{\sigma,r}( f) \in \M^r_\sigma$ supported in $G_{\sigma,r} \subset G(F)$ given by 
\begin{equation*}
    \iota_{\sigma,r}( f) :G_{\sigma,r}\rightarrow         \frac{G_{\sigma,r}}{G_{\sigma,r+}} \xrightarrow[\simeq]{\bar{\mc L}_{\sigma,r}} \frac{\fg_{\sigma,r}}{\fg_{\sigma,r+}} =\bg_{\sigma,r} \xrightarrow{f}\C
\end{equation*}

We can define another natural inclusion map $\tilde \iota_{\sigma,r}:C(\bg_{\sigma,r}) \rightarrow C^\infty_c(\fg) $ sending $ C(\bg_{\sigma,r}) \ni f: \bg_{\sigma,r} \rightarrow\C$ to  $\iota_{\sigma,r}( f)\in C^\infty_c(\fg)$ supported in $\fg_{\sigma,r} \subset \fg$ given by 
\begin{equation*}
    \tilde \iota_{\sigma,r}(f): \fg_{\sigma,r} \rightarrow  \frac{\fg_{\sigma,r}}{\fg_{\sigma,r+}} =\bg_{\sigma,r} \xrightarrow{f}\C
\end{equation*}
 Observe that $\tilde\iota_{\sigma,r}( f) = (\mc L_{\sigma,r})_!(\iota_{\sigma,r}( f))$. 
Fix a Haar measure $\mu$ on $G(F)$ similarly to \cite{bhattacharya2025description}, and let $\tilde \mu:=\mc L_!(\mu)$ be a Haar measure on $\fg$. Fixing the Haar measure $\tilde \mu$ fixes an isomorphism $\mc H(\fg) \xrightarrow{\sim} C_c^\infty(\fg)$. We define $c^\mu_{\sigma,r} $ to be the constant $\mu(G_{\sigma,r+})\left|\bg_{\sigma,r}\right|^{1/2}= \tilde \mu (\fg_{\sigma,r+})\left|\bg_{\sigma,r}\right|^{1/2}$. For $\sigma \in [\bar\mcC_m]$, consider the composed map 
\begin{equation*}
    j_{\sigma,r}: \C[\bt^*//W(\F_q)] \xlongrightarrow{(c^\mu_{\sigma,r})^{-1}t^*_{\sigma}} \C[\bt^*//W_\sigma(\F_q)]\xlongrightarrow{\rho_{\sigma,r}} C^{\mathrm{st}}(\bg_{\sigma,r}) \xlongrightarrow{\iota_{\sigma,r}}\M^r_\sigma
\end{equation*}
 where $C^{\mathrm{st}}(\bg_{\sigma,r})$ and $\rho_{\sigma,r}$ are as defined in \cite[Section 4]{bhattacharya2025description}. Further we also define $\tilde  j_{\sigma,r} $ as the composed map 
 \begin{equation*}
       \tilde j_{\sigma,r}: \C[\bt^*//W(\F_q)] \xlongrightarrow{(c^\mu_{\sigma,r})^{-1}t^*_{\sigma}} \C[\bt^*//W_\sigma(\F_q)]\xlongrightarrow{\rho_{\sigma,r}} C^{\mathrm{st}}(\bg_{\sigma,r}) \xlongrightarrow{\tilde \iota_{\sigma,r}} C^\infty_c(\fg)
 \end{equation*}
 and since $j_{\sigma,r}$ has image supported in $G_{\sigma,r}$, we have $\tilde j_{\sigma,r}=(\mc L_{\sigma,r})_!\circ j_{\sigma,r}$. Let $i_{\sigma,r}:=\rho_{\sigma,r}\circ (c^\mu_{\sigma,r})^{-1}t^*_{\sigma}$ as defined in \cite[Lemma 5.3]{bhattacharya2025description}.

 For $\sigma\in [\bar \mcC_m]$, we define $E_{\sigma,r,\vartheta}:=j_{\sigma,r}(1_\vartheta)$ and $\mc E_{\sigma,r,\vartheta}:= \tilde j_{\sigma,r}(1_\vartheta) $, and extend the definition to $\sigma \in [\B_m]$ by the adjoint action. For each non-empty finite $\Sigma \in \Theta_m$, define $E^\Sigma_{r,\vartheta}:= \sum_{\sigma \in \Sigma}(-1)^{\mathrm{dim}(\sigma)}E_{\sigma,r,\vartheta}d\mu \in \mc H(G)$, and let $E_{r,\vartheta}=\lim_{\Sigma \in \Theta_m}E^\Sigma_{r,\vartheta}$. From the proof of \cite[Theorem 1.1]{bhattacharya2025description}, we know that $\xi^r(1_\vartheta)=E_{r,\vartheta} $ interpreted as $G(F)$-invariant essentially compact distributions. 
 
 We make similar definitions for the Lie algebra and the dual of the Lie algebra. For $\Sigma \in \Theta_m$, define $\mc E^\Sigma_{r,\vartheta}:= \sum_{\sigma \in \Sigma}(-1)^{dim(\sigma)}\mc E_{\sigma,r,\vartheta}d\tilde \mu \in \mc H(\fg)$. In the dual of the Lie algebra, set $\fg^*_{\Sigma,-r,\vartheta}:=\cup_{\sigma \in \Sigma}\fg^*_{\sigma ,-r,\vartheta}$. We have the following lemma, which will be used in the proof of the next proposition 
\begin{lemma}\label{lemma:stabilityaltsum}
    For every $\Sigma \in \Theta_m$ and $r \in \frac{1}{m}\Z$, we have the equality $  1_{\fg^*_{\Sigma,-r,\vartheta}} = \sum_{\sigma \in \Sigma }(-1)^{\dim(\sigma)}1_{ \fg^*_{\sigma ,-r,\vartheta}}$
\end{lemma}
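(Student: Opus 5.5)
The identity is pointwise, so I will fix $X\in\fg^*$ and compare both sides at $X$. Put $\Sigma_X:=\{\sigma\in\Sigma : X\in\fg^*_{\sigma,-r,\vartheta}\}$. The left side at $X$ is $1$ if $\Sigma_X\neq\varnothing$ and $0$ otherwise. The right side is $\sum_{\sigma\in\Sigma_X}(-1)^{\dim\sigma}$. So it suffices to show that when $\Sigma_X$ is nonempty, this alternating sum equals $1$.

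The strategy is the one used in \cite{bkv2016depthrprojector} for $\fg^*_{\sigma,-r}$. I will show that $\Sigma_X$ is itself a nonempty finite convex subcomplex of $\B_m$ (possibly after passing to its closure). The alternating sum over its open refined facets is then the Euler characteristic of a compact contractible polysimplicial complex, which is $1$.

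The key steps are as follows.

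\textbf{(i) Description of $\Sigma_X$.} The set $\{\sigma : X\in\fg^*_{\sigma,-r}\}$ is a closed convex subset, namely the set of points $y$ with $X\in\fg^*_{y,-r}$. For the refined structure, $\fg^*_{y,-r}$ is constant on refined facets, and $\fg^*_{\sigma',-r}\supset\fg^*_{\sigma,-r}$ when $\sigma'\preceq\sigma$. Hence $\{y : X\in\fg^*_{y,-r}\}$ is a union of closed refined facets. It is convex by the usual argument: it is an intersection over apartments of affine-root conditions.

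\textbf{(ii) Compatibility of the parameter under specialization.} For $\sigma'\preceq\sigma$ with $X\in\fg^*_{\sigma,-r}$, I will show that $J_{\sigma,r}(X)=J_{\sigma',r}(X)$. The inclusions $\fg^*_{\sigma,-r}\subset\fg^*_{\sigma',-r}$ and $\fg^*_{\sigma,-r+}\supset\fg^*_{\sigma',-r+}$ induce maps between the quotients $\bg^*$. Under the Chevalley maps of \eqref{stableGIT}, these correspond to the compatibility $t_{\sigma'}\circ(\text{restriction } \bt^*//W_\sigma\to\bt^*//W_{\sigma'})=t_\sigma$. This is precisely the compatibility used in \cite[Section 5]{bhattacharya2025description} to show that the $j_{\sigma,r}$ form an inverse system. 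Alternatively, the disjointness statement \cite[Lemma 36]{chen2025langlands} shows that the parameter of $X$ is independent of the facet at which it is read. Either way, on the locus from (i), $X\in\fg^*_{\sigma,-r,\vartheta}$ holds for one facet if and only if it holds for all facets. So $\Sigma_X$ is either empty or equal to $\Sigma\cap\{\sigma: X\in\fg^*_{\sigma,-r}\}$.

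\textbf{(iii) Convexity and Euler characteristic.} The set $|\Sigma|\cap\{y: X\in\fg^*_{y,-r}\}$ is an intersection of two closed convex unions of refined facets, hence a closed convex subcomplex. It is finite because $\Sigma$ is finite, and it is contractible when nonempty. Its Euler characteristic, computed from open cells, is $\sum(-1)^{\dim\sigma}=1$.

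I expect step (ii) to be the main obstacle. It requires checking that the graded quotient maps between adjacent refined facets commute with the invariant-theoretic maps $\chi_{\sigma,r}$. For this I would rely on the argument in \cite[Lemma 5.3]{bhattacharya2025description} and on \cite[Lemma 36]{chen2025langlands}, rather than recomputing it.
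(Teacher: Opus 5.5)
Your argument is correct and is essentially the paper's proof: the paper cites \cite[Lemma 6.8]{bkv2016depthrprojector}, whose proof is exactly your Euler-characteristic argument for the finite convex subcomplex $\{y\in|\Sigma| : X\in\fg^*_{y,-r}\}$, and then multiplies by $1_{\fg^*_{-r,\vartheta}}$. That multiplication tacitly uses $\fg^*_{\sigma,-r}\cap\fg^*_{-r,\vartheta}=\fg^*_{\sigma,-r,\vartheta}$, which is your step (ii) via the disjointness in \cite[Lemma 36]{chen2025langlands}; your write-up makes both ingredients explicit, and the disjointness route alone settles (ii), so you do not need the specialization compatibility of the Chevalley maps.
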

 \begin{proof}
     By \cite[Lemma 6.8]{bkv2016depthrprojector}, we have
$$1_{\mathfrak{g}^*_{\Sigma,-r}}=\sum_{\sigma\in\Sigma}(-1)^{\mathrm{dim }(\sigma)}1_{\mathfrak{g}^*_{\sigma,-r}}.$$ Multiplying both sides by $1_{\mathfrak{g}^*_{-r,\vartheta}}$ gives us the equation we want.
 \end{proof}
  
The next proposition proves that the Fourier inverse of the characteristic function on the ``dual blob" is exactly the Lie algebra analogue of our projector.
 \begin{prop}\label{prop:liealgebraprojlim}
     For any $f \in C^\infty_c(\fg)$, the inductive system $\{\mc E^\Sigma_{r,\vartheta}(f)\}_{\Sigma \in \Theta_m}$ stabilizes, and we have $\mc E_{r,\vartheta}(f)=\lim_{\Sigma \in \Theta_m}\mc E^\Sigma_{r,\vartheta}(f)$. In particular, $\mc E_{r,\vartheta}$ is supported in $\fg_r$. 
 \end{prop}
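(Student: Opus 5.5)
The strategy is to work on the Fourier side, where everything becomes a statement about indicator functions in $\fg^*$. I will compute the Fourier transform of each local term $\mc E_{\sigma,r,\vartheta}$, sum over $\Sigma$ with the Euler--Poincar\'e signs, and then use the corresponding stabilization result for the subsets $\fg^*_{\Sigma,-r}$ from \cite{bkv2016depthrprojector}.

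\textbf{Step 1: the local Fourier computation.} Fix $\sigma\in[\bar\mcC_m]$. By construction, $\mc E_{\sigma,r,\vartheta}\,d\tilde\mu$ is the inflation to $\fg_{\sigma,r}$ of
\[
(c^\mu_{\sigma,r})^{-1}\mcF^{-1}_{\bg^*_{\sigma,r}}\bigl(\chi_{\sigma,r}^*t_\sigma^*1_\vartheta\bigr).
\]
I will show that its Fourier transform $\mc F(\mc E_{\sigma,r,\vartheta}\,d\tilde\mu)$ on $\fg^*$ equals $1_{\fg^*_{\sigma,-r,\vartheta}}$. Two standard facts are needed:
\begin{itemize}
\item the annihilator of $\fg_{\sigma,r+}$ under $\psi\circ\langle\cdot,\cdot\rangle$ is $\fg^*_{\sigma,-r}$, using that $\psi$ is trivial on $\varpi\mf O$ but not on $\mf O$;
\item the Fourier transform of a $\fg_{\sigma,r+}$-invariant function supported on $\fg_{\sigma,r}$ is the inflation of the finite Fourier transform on $\bg_{\sigma,r}$, multiplied by $\tilde\mu(\fg_{\sigma,r+})|\bg_{\sigma,r}|^{1/2}$.
\end{itemize}
This factor is exactly $c^\mu_{\sigma,r}$, so the normalizations cancel. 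The inflation of $\chi_{\sigma,r}^*t_\sigma^*1_\vartheta$ is $1_{J^{-1}_{\sigma,r}(\vartheta)}=1_{\fg^*_{\sigma,-r,\vartheta}}$. Both sides are compatible with $\Ad(g)$, so the identity extends to all $\sigma\in[\B_m]$.

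\textbf{Step 2: summing over $\Sigma$.} Summing the identity of Step 1 with signs and applying \Cref{lemma:stabilityaltsum} gives
\[
\mc F(\mc E^\Sigma_{r,\vartheta})=1_{\fg^*_{\Sigma,-r,\vartheta}}=1_{\fg^*_{\Sigma,-r}}\cdot 1_{\fg^*_{-r,\vartheta}}.
\]

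\textbf{Step 3: stabilization.} For $f\in C^\infty_c(\fg)$ I write
\[
\mc E^\Sigma_{r,\vartheta}(f)=\int_{\fg^*}1_{\fg^*_{\Sigma,-r,\vartheta}}\,\mc F^{-1}(f),
\]
up to the fixed normalization, where $\mc F^{-1}(f)$ is a compactly supported smooth measure on $\fg^*$. It suffices to show that for every compact $K\subset\fg^*$ there is $\Sigma_0$ with $K\cap\fg^*_{\Sigma,-r}=K\cap\fg^*_{-r}$ for all $\Sigma\supseteq\Sigma_0$. This is exactly the statement used in \cite[\S 6]{bkv2016depthrprojector} for depth-$r$ projectors: compact subsets of $\fg^*_{-r}$ meet only finitely many $G(F)$-translates of refined facets in an essential way, together with a Moy--Prasad type covering lemma (cf.\ their Lemma B.3 and its surrounding results). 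Intersecting with $\fg^*_{-r,\vartheta}$ then shows that $\mc E^\Sigma_{r,\vartheta}(f)$ stabilizes, with limit
\[
\int 1_{\fg^*_{-r,\vartheta}}\,\mc F^{-1}(f)=\mc F^{-1}(1_{\fg^*_{-r,\vartheta}})(f)=\mc E_{r,\vartheta}(f).
\]

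\textbf{Step 4: support.} Each $\mc E^\Sigma_{r,\vartheta}$ is supported in $\bigcup_\sigma\fg_{\sigma,r}\subset\fg_r$. If $f$ is supported outside $\fg_r$, then every term vanishes, hence so does the limit. Therefore $\mc E_{r,\vartheta}$ is supported in $\fg_r$.

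\textbf{Main obstacle.} I expect the main difficulty to be Step 3, namely adapting the stabilization result of \cite{bkv2016depthrprojector} to refined facets $\B_m$ and to convex subcomplexes $\Sigma\in\Theta_m$. The normalization bookkeeping in Step 1 is routine but needs care to make $c^\mu_{\sigma,r}$ cancel exactly.
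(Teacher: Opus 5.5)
Your proposal is correct and follows essentially the same route as the paper. You prove the local identity $\mc F(\mc E_{\sigma,r,\vartheta})=1_{\fg^*_{\sigma,-r,\vartheta}}$ with $c^\mu_{\sigma,r}$ cancelling, apply \Cref{lemma:stabilityaltsum} to get $\mc F(\mc E^\Sigma_{r,\vartheta})=1_{\fg^*_{\Sigma,-r,\vartheta}}$, and pass to the limit. Your Steps 3 and 4 make explicit what the paper treats as immediate, and Step 3 is easier than you expect: since $\fg^*_{-r}$ is closed \cite[Lemma B.3]{bkv2016depthrprojector}, $K\cap\fg^*_{-r}$ is compact and covered by the open lattices $\fg^*_{\sigma,-r}$, $\sigma\in[\B_m]$, so finitely many facets suffice and directedness of $\Theta_m$ gives a single $\Sigma_0$.
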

\begin{proof}
    The restriction of the Fourier transform $\mc F:D(\fg)\rightarrow \widehat C(\fg^*)$ to $\mc H(\fg)$ is the Fourier transform $\mc F: \mc H(\fg) \rightarrow C^\infty_c(\fg^*)$ (which we again denote by $\mc F$).  Fixing the Haar measure, we can write $ \mc F:C_c^\infty(\fg) \rightarrow C_c^\infty(\fg^*)$ explicitly as 
\begin{equation*}
    F(h)(a)=\int_\fg \psi(\langle a,y\rangle)h(y)d\tilde \mu(y)
\end{equation*}
For every $\Sigma \in \Theta_m$, we had defined $\fg^*_{\Sigma,-r,\vartheta}:=\cup_{\sigma \in \Sigma}\fg^*_{\sigma ,-r,\vartheta}$ which is an open and compact subset, and $\fg^*_{-r,\vartheta}=\cup_{\Sigma \in \Theta_m}\fg^*_{\Sigma,-r,\vartheta}$. Thus $1_{\fg^*_{-r,\vartheta}} = \lim_{\Sigma \in \Theta_m}1_{\fg^*_{\Sigma,-r,\vartheta}}$ and 
\begin{equation*}
    \mc E_{r,\vartheta} = \mc F^{-1}(1_{\fg^*_{-r,\vartheta}})=  \lim_{\Sigma \in \Theta_m}\mc F^{-1}(1_{\fg^*_{\Sigma,-r,\vartheta}})
\end{equation*}
It therefore suffices to show that $\mc F^{-1}(1_{\fg^*_{\Sigma,-r,\vartheta}})=\mc E_{r,\vartheta}^\Sigma$, i.e., $\mc F(\mc E_{r,\vartheta}^\Sigma)=1_{\fg^*_{\Sigma,-r,\vartheta}}$. Lemma \ref{lemma:stabilityaltsum} shows that $1_{\fg^*_{\Sigma,-r,\vartheta}} = \sum_{\sigma \in \Sigma }(-1)^{dim(\sigma)}1_{ \fg^*_{\sigma ,-r,\vartheta}}$ and hence by definition of $\mc E_{r,\vartheta}^\Sigma$, it is enough to show that $\mc F(\mc E_{\sigma,r,\vartheta})= 1_{\fg^*_{\sigma,-r,\vartheta}}$. Using the pairing $\fg^* \times \fg\rightarrow \C^\times $, properties of Fourier transforms and the fact that the function is invariant under right translation by $\mathfrak{g}_{\sigma,r+}$ we know that the support of $\mc F(\mc E_{\sigma,r,\vartheta})$ is contained in $\fg^*_{\sigma,-r}$.  For $a \in \fg^*_{\sigma,-r}$ and $y \in \fg_{\sigma,r}$, let $\bar a$ and $\bar y$ denote the images in $\frac{\fg^*_{\sigma,-r}}{\fg^*_{-r+}}$ and $\frac{\fg_{\sigma,r}}{\fg_{\sigma,r+}}$ respectively. Further, for $r >0$, the pairing $\langle-,-\rangle$ induces a $\G_\sigma(\F_q)$-invariant pairing 
\begin{align*}
  \frac{\fg^*_{\sigma,-r}}{\fg^*_{\sigma,-r+}} \times   \frac{\fg_{\sigma,r}}{\fg_{\sigma,r+}} &\rightarrow \F_q\\
  (X,Y) & \mapsto \langle X,Y\rangle \:\mathrm{mod}\:\varpi \mf O
\end{align*}
of $\F_q$-vector spaces. Using the above facts, we have for $a \in \fg^*_{\sigma ,-r}$
\begin{align*}
    \mc F(\mc E_{\sigma,r,\vartheta})(a)&= \int_\fg \psi(\langle a,y\rangle)\mc E_{\sigma,r,\vartheta}(y)d\tilde \mu(y)= \int_{\fg_{\sigma,r}} \psi(\langle a,y\rangle)\mc E_{\sigma,r,\vartheta}(y)d\tilde \mu(y)\\
    &= \tilde \mu (\bg_{\sigma,r+}) \sum_{\bar y \in \bg_{\sigma,r}}\tilde \psi((\bar a,\bar y))i_{\sigma,r}(1_\vartheta)(\bar y)=c^\mu_{\sigma,r}\mc F_{\bg_{\sigma,r}}(i_{\sigma,r}(1_\vartheta))(\bar a)\\
    & =\mc F_{\bg_{\sigma,r}}(\rho_{\sigma,r}\circ t^*_\sigma(1_\vartheta))(\bar a)= \mc F_{\bg_{\sigma,r}}\circ \mc F_{\bg^*_{\sigma,r}}(\chi_{\sigma,r}^*(t^*_\sigma(1_\vartheta))^-)(\bar a)
\end{align*}
where the $\mc F_{\bg_{\sigma,r}}$ and $\mc F_{\bg^*_{\sigma,r}}$ are as defined in \cite[Section 4.2]{bhattacharya2025description}. Thus we have for $a \in \fg^*$ 
\begin{equation*}
    \mc F(\mc E_{\sigma,r,\vartheta})(a)= \chi_{\sigma,r}^*(t^*_\sigma(1_\vartheta))(\bar a) = 1_{J_{\sigma,r}^{-1}(\vartheta)}(a)=1_{\fg^*_{\sigma,-r,\vartheta}}(a)
\end{equation*}
finishing our proof.

\end{proof}

Note that we have a quasi-logarithm $\mc L$ which for $r>0$ induces $r$-logarithms (similar to \cite[1.21]{bkv2016depthrprojector}) $\mc L_r: G_r \rightarrow \fg_r$. Note that we use $G_r$ and $G_{x,r}$ and $r>0$ for our convention of $r$-logarithm, instead of $G_{r+}$ as in \cite{bkv2016depthrprojector}. Using that, we have the following lemma, which follows directly from the construction of $j_{\sigma,r}$ and $\tilde j_{\sigma,r}$
\begin{lemma}\label{lemma:pushforward}
    The pushforward $(\mc L_r)_!(E_{r,\vartheta}|_{G_r})$ equals $\mc E_{r,\vartheta}|_{\fg_r}$.
\end{lemma}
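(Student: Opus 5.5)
We compare the two alternating sums facet by facet, using the identity $\tilde j_{\sigma,r}=(\mc L_{\sigma,r})_!\circ j_{\sigma,r}$ recorded above, and then pass to the limit over $\Sigma\in\Theta_m$ with the stabilization in \Cref{prop:liealgebraprojlim}.

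\textbf{Step 1 (one facet).} Fix $\sigma\in[\bar\mcC_m]$. The function $E_{\sigma,r,\vartheta}=\iota_{\sigma,r}(i_{\sigma,r}(1_\vartheta))$ is supported in $G_{\sigma,r}$ and factors through $G_{\sigma,r}/G_{\sigma,r+}\xrightarrow{\bar{\mc L}_{\sigma,r}}\bg_{\sigma,r}$. The function $\mc E_{\sigma,r,\vartheta}=\tilde\iota_{\sigma,r}(i_{\sigma,r}(1_\vartheta))$ is supported in $\fg_{\sigma,r}$ and factors through $\bg_{\sigma,r}$. The commutative square relating $\mc L_{\sigma,r}$ and $\bar{\mc L}_{\sigma,r}$ then gives $\mc E_{\sigma,r,\vartheta}=E_{\sigma,r,\vartheta}\circ\mc L_{\sigma,r}^{-1}$ as functions. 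Since $G_{\sigma,r}\subset G_r$, we have $\mc L_r|_{G_{\sigma,r}}=\mc L_{\sigma,r}$, because both are restrictions of the single quasi-logarithm $\mc L$. We also have $\tilde\mu=\mc L_!\mu$. Together these give the equality of measures
\begin{equation*}
(\mc L_r)_!\bigl(E_{\sigma,r,\vartheta}\,d\mu\bigr)=\mc E_{\sigma,r,\vartheta}\,d\tilde\mu .
\end{equation*}

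\textbf{Step 2 (all facets).} For $\sigma'=g\cdot\sigma$ with $g\in G(F)$, both $E_{\sigma',r,\vartheta}$ and $\mc E_{\sigma',r,\vartheta}$ are defined by transport under $\Ad(g)$. The quasi-logarithm $\mc L$ is $\Ad\,G$-equivariant and $G_r$ is $G(F)$-stable, so the identity of Step 1 persists for every $\sigma\in[\B_m]$.

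\textbf{Step 3 (alternating sums and the limit).} Summing over $\sigma\in\Sigma$ with signs $(-1)^{\dim\sigma}$ gives $(\mc L_r)_!(E^\Sigma_{r,\vartheta})=\mc E^\Sigma_{r,\vartheta}$ for every finite $\Sigma\in\Theta_m$.

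Now take $f\in C_c^\infty(\fg)$ supported in $\fg_r$. Then $f\circ\mc L_r$, extended by zero, lies in $C_c^\infty(G)$, because $G_r$ is open and closed in $G(F)$ and $\mc L_r$ is an analytic isomorphism. We obtain
\begin{equation*}
\mc E_{r,\vartheta}(f)=\lim_\Sigma\mc E^\Sigma_{r,\vartheta}(f)=\lim_\Sigma E^\Sigma_{r,\vartheta}(f\circ\mc L_r)=E_{r,\vartheta}(f\circ\mc L_r).
\end{equation*}
The first equality is \Cref{prop:liealgebraprojlim}, and the last uses $E_{r,\vartheta}=\lim_\Sigma E^\Sigma_{r,\vartheta}$ from the proof of \cite[Theorem 1.1]{bhattacharya2025description}. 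This equality for all such $f$ is exactly the statement $(\mc L_r)_!(E_{r,\vartheta}|_{G_r})=\mc E_{r,\vartheta}|_{\fg_r}$.

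\textbf{Expected obstacle.} The delicate point is the limit in Step 3 on the group side: we need the net $E^\Sigma_{r,\vartheta}(f\circ\mc L_r)$ to stabilize with value $E_{r,\vartheta}(f\circ\mc L_r)$. I would argue this using the Lie algebra stabilization just proved, since the Step 3 identity makes the two nets literally equal term by term. If this is not accepted, I would cite the group-side stabilization from \cite{bhattacharya2025description}, as in \cite{bkv2016depthrprojector}. The other thing to check is that the normalizing constants $c^\mu_{\sigma,r}$ match on both sides. They do, because the same constant is used in $j_{\sigma,r}$ and $\tilde j_{\sigma,r}$, and $\mu(G_{\sigma,r+})=\tilde\mu(\fg_{\sigma,r+})$ by the choice $\tilde\mu=\mc L_!\mu$.
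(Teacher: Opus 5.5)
Your proposal is correct and follows the paper's argument. The paper starts from the facet-wise identity $\tilde j_{\sigma,r}=(\mc L_{\sigma,r})_!\circ j_{\sigma,r}$ and sums it with signs to get $(\mc L_r)_!(E^\Sigma_{r,\vartheta}|_{G_r})=\mc E^\Sigma_{r,\vartheta}|_{\fg_r}$ for each $\Sigma\in\Theta_m$. It then passes to the limit using \Cref{prop:liealgebraprojlim} on the Lie algebra side and $E_{r,\vartheta}=\lim_\Sigma E^\Sigma_{r,\vartheta}$ from \cite{bhattacharya2025description} on the group side. Your version additionally spells out the $\Ad$-equivariance step for facets outside $\bar{\mcC}_m$ and the role of $\tilde\mu=\mc L_!\mu$, both of which the paper leaves implicit.
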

\begin{proof}
    The $r$-logarithm $\mc L_r$ induces homeomorphisms $\mc L_{\sigma,r}:G_{\sigma,r}\xrightarrow{\sim} \fg_{\sigma,r}$ for every $\sigma \in [\B_m]$, and we have $\tilde j_{\sigma,r}=(\mc L_{\sigma,r})_!\circ j_{\sigma,r}$. Hence, we have $(\mc L_r)_!(E^\Sigma_{r,\vartheta}|_{G_r})= \mc E_{r,\vartheta}^\Sigma|{\fg_r}$ for every $\Sigma \in \Theta_m$, and hence we can conclude by \cite{bhattacharya2025description} and Proposition \ref{prop:liealgebraprojlim}.

\end{proof}

\paragraph{Stability} We roughly follow the ideas in \cite[Section 8]{bkv2016depthrprojector}. By \cite[8.3(d), 8.4(a)]{bkv2016depthrprojector}, we know that $1_{\fg^*_{-r,\vartheta}} \in \widehat C^G(\fg^*)$ is stable. By a generalization in \cite{kazhdan2003generalization} of a theorem of Waldspurger \cite{waldspurger1997lemme}, we know that the Fourier transform preserves stability and hence $\mc E_{r,\vartheta}= \mc F^{-1}(1_{\fg^*_{-r,\vartheta}})$ is stable. 

In \cite[Lemma B.3, 8.4(a)]{bkv2016depthrprojector}, it is shown that $1_{G_r}$ is stable. Hence, for any unstable function $f \in C^\infty_c(G)$, we know that the functions $f':=f\cdot1_{G_r}$ and $(\mc L_r)_!(f')$ are unstable. Further, since $E_{r,\vartheta}$ is supported on $G_r$, we have $E_{r,\vartheta}(f)=E_{r,\vartheta}(f')$ and $E_{r,\vartheta}(f') = \mc E_{r,\vartheta}((\mc L_r)_!(f'))$ by Lemma \ref{lemma:pushforward}. Hence, $E_{r,\vartheta}(f)=\mc E_{r,\vartheta}((\mc L_r)_!(f'))=0$ since $\mc E_{r,\vartheta}$ is stable and $(\mc L_r)_!(f')$ is unstable.

\paragraph{}Note that this proves our result for semi-simple simply connected groups $G$ when $p$ is very good for $G$. We extend to more general groups using the following results
\begin{lemma}[\cite{bkv2016depthrprojector}, Corollary 8.11]\label{lemma:verygoodlog}
    If $p$ is very good for $G$, the simply connected cover of the derived subgroup $G^{sc}$ admits an $r$-logarithm for $r \in \R_{>0}$. 
\end{lemma}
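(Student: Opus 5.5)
The plan is to construct an $r$-logarithm for $G^{sc}$ directly from the one already obtained for semisimple simply connected groups. In that case the quasi-logarithm of \cite[Appendix C]{bkv2016depthrprojector} does the job. Write $G^{sc}=\prod_i G_i$ as a product of (Weil restrictions of) almost simple simply connected groups; since $G$ is split, each $G_i$ is split almost simple simply connected. The hypothesis that $p$ is very good for $G$ means $p$ is very good for every simple factor of the root datum of $G$. Hence it is very good for each $G_i$, because the root system of $G^{sc}$ is that of the derived group of $G$. For each $G_i$ I would take the $\Ad G_i$-equivariant quasi-logarithm $\mc L^{(i)}: G_i\to \fg_i$ defined over $\mf O$ from \cite[Appendix C]{bkv2016depthrprojector}, and form the product $\mc L=\prod_i \mc L^{(i)}:G^{sc}\to \fg^{sc}=\bigoplus_i\fg_i$. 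This map is $\Ad G^{sc}$-equivariant and defined over $\mf O$. It also satisfies $\mc L(1)=0$, and its differential at $1$ is the identity.

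The key steps, in order, are as follows.
\begin{itemize}
\item First, check that the Moy--Prasad filtrations are compatible with the product decomposition. Both $\B(G^{sc},F)=\prod_i\B(G_i,F)$ and $G^{sc}_{x,r}=\prod_i (G_i)_{x_i,r}$ hold, and similarly for $\fg^{sc}_{x,r}$ and for $G^{sc}_r=\bigcup_x G^{sc}_{x,r}$.
\item Second, show that each $\mc L^{(i)}$ restricts to analytic isomorphisms $(G_i)_{x,r}\to(\fg_i)_{x,r}$ inducing the Moy--Prasad isomorphism on the $r$-th graded piece. I would argue via an $\mf O$-model: in coordinates, a map defined over $\mf O$ with identity differential has the form $g\mapsto (g-1)+(\text{higher order terms})$. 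The higher order terms map the depth-$r$ filtration into depth $\geq 2r>r$ when $r>0$. A standard successive-approximation (Hensel-type) argument on the filtration then gives bijectivity on $G_{x,r}$.
\item Third, take products over $i$, and glue over $x$ using equivariance, to obtain $\mc L_r:G^{sc}_r\to\fg^{sc}_r$.
\end{itemize}

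The main obstacle is the first half of the second step. One has to guarantee that $\mc L^{(i)}$ genuinely maps $G_{x,r}$ into $\fg_{x,r}$ for non-hyperspecial $x$. A naive argument only handles $x$ a hyperspecial vertex, where the $\mf O$-model is the Chevalley model. To handle general $x$, I would first treat $x$ in the standard apartment, using the root subgroup coordinates: the affine root valuations control both sides, and $\mc L$ is $T$-equivariant with weight-preserving Taylor expansion. I would then transport to all $x$ by $G(F)$-equivariance. Very-goodness of $p$ enters exactly here. It ensures that the trace-form or representation used to define $\mc L$ is nondegenerate modulo $p$, so the differential is invertible over $\mf O$ rather than merely over $F$; this is the input that \cite[8.10]{bkv2016depthrprojector} is designed to guarantee.
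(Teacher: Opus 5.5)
Your route is the one the paper's citation rests on. The paper gives no argument beyond \cite[Corollary~8.11]{bkv2016depthrprojector}, and in Section~\ref{sec:Positivedepths} it uses \cite[Appendix~C]{bkv2016depthrprojector} in the same way: an $\mathrm{Ad}$-equivariant quasi-logarithm defined over $\mathfrak{O}$ restricts to $r$-logarithms. Splitting $G^{sc}$ into split almost simple factors is harmless. Your apartment/weight estimate is a sound way to get $\mathcal{L}(G_{x,s})\subseteq\mathfrak{g}_{x,s}$, with the Moy--Prasad map on graded pieces, for all real $s\ge r>0$. Your account of where very-goodness enters is off, however. It is needed to have an integral $\mathcal{L}$ at all: the orthogonal projection $\mathfrak{gl}(V)\to\mathfrak{g}$ for the trace form of $\rho$ is defined over $\mathfrak{O}$ only if that form is nondegenerate mod $p$ (for $\mathrm{SL}_n$, $g\mapsto g-\tfrac{1}{n}\mathrm{tr}(g)$ needs $p\nmid n$). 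You already imported this when you cited Appendix~C. The non-hyperspecial estimate uses only integrality and $T$-equivariance, and no further condition on $p$.

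The step that does not go through as written is your third one. Since $\mathcal{L}$ is already a global map, there is nothing to glue. What has to be shown is that $\mathcal{L}|_{G_r}$ is injective, and equivariance gives only $\mathcal{L}(G_{gx,r})=\mathfrak{g}_{gx,r}$. Bijectivity on each of the overlapping pieces $G_{x,r}$ does not exclude $\mathcal{L}(g)=\mathcal{L}(g')$ with $g\in G_{x,r}$, $g'\in G_{y,r}$ and $g\neq g'$.

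One fix is to put $x,y$ in a common apartment. Then $G_{x,r}\cap G_{y,r}=G_f$ and $\mathfrak{g}_{x,r}\cap\mathfrak{g}_{y,r}=\mathfrak{g}_f$ for $f=\max(f_x,f_y)$, where $f_x(\alpha)=r-\alpha(x)$ and $f_x(0)=r$. Your estimate uses only $\sum_i f(\alpha_i)\ge f(\sum_i\alpha_i)+(k-1)r$. This holds with equality for $f_x$ and $f_y$, hence as an inequality for their maximum. So the same argument makes $\mathcal{L}\colon G_f\to\mathfrak{g}_f$ bijective, and the unique preimage of $\mathcal{L}(g)$ in $G_f$ must equal both $g$ and $g'$.

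Separately, your successive approximation along the filtration, for surjectivity as well as injectivity, needs $\mathcal{L}(gh)\equiv\mathcal{L}(g)+\mathcal{L}(h)\bmod\mathfrak{g}_{x,s+}$ for $g\in G_{x,r}$ and $h\in G_{x,s}$. This does not follow from the graded statement alone. It does follow from the same weight estimate applied to $\mathcal{L}(gh)-\mathcal{L}(g)-\mathcal{L}(h)$. That function vanishes on $G\times 1\cup 1\times G$, so every monomial in it involves coordinates of both $g$ and $h$.
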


\begin{lemma}[\cite{bkv2016depthrprojector}, Lemma 8.12]\label{lemma:isogenyisom}
    Let $\pi:G'\rightarrow G$ be an isogeny of degree prime to $p$. Then $\pi$ induces homeomorphisms $G'_{x,r} \xrightarrow{\sim} G_{x,r}$ and $G'_r\xrightarrow{\sim}G_r$ for all $r \in \R_{>0}$ and $x \in \B(G,F)$
\end{lemma}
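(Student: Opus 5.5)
The plan is to reduce the statement to the torus and root subgroups, using the Moy--Prasad factorization of $G_{x,r}$ for $r>0$, together with the fact that these groups are pro-$p$. Since $\pi$ is an isogeny of degree prime to $p$, it is central: its kernel $\ker\pi$ is a finite central multiplicative group scheme of order $N$ with $p\nmid N$. The buildings $\B(G',F)$ and $\B(G,F)$ are canonically identified, so $G'_{x,r}$ and $G_{x,r}$ refer to the same point $x$. I work with $G$ split, as in the paper, and fix a split maximal torus $T'\subset G'$ with image $T=\pi(T')$, and $x\in\A(T,F)$. The general $x$ follows by $G(F)$-conjugation, after noting that $\pi(G'(F))$ acts on the building compatibly.

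\emph{Step 1: injectivity.} For $r>0$ the group $G'_{x,r}$ is pro-$p$. Every element of $\ker\pi(F)$ has order dividing $N$, which is prime to $p$. A pro-$p$ group has no nontrivial elements of order prime to $p$, so $\ker\pi(F)\cap G'_{x,r}=1$.

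\emph{Step 2: surjectivity.} By Moy--Prasad, $G_{x,r}$ is generated by $T_r$ and the groups $U_{\alpha,x,r}$ for $\alpha\in\Phi$. In fact it has a product decomposition $T_r\cdot\prod_{\alpha}U_{\alpha,x,r}$, in any fixed order, and the analogous statement holds for $G'$.
\begin{itemize}
\item On root subgroups, $\pi$ restricts to an isomorphism $U'_\alpha\to U_\alpha$ of additive groups. This isomorphism is compatible with the pinnings up to a unit, so it matches the filtrations: $U'_{\alpha,x,r}\xrightarrow{\sim}U_{\alpha,x,r}$.
\item On the torus, $T'_r\cong X_*(T')\otimes_{\Z}(1+\varpi^{\lceil r\rceil}\mf O)$, using the standard filtration on units, and similarly for $T$. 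The map is induced by the injection $X_*(T')\hookrightarrow X_*(T)$, whose cokernel is finite of order dividing a power of $N$. The group $1+\varpi^{\lceil r\rceil}\mf O$ is a $\Z_p$-module, and tensoring over $\Z$ with a $\Z_p$-module kills $\Z/N$-torsion phenomena. Hence $T'_r\to T_r$ is an isomorphism.
\end{itemize}
Combining the two, $\pi(G'_{x,r})\supseteq T_r\cdot\prod_\alpha U_{\alpha,x,r}=G_{x,r}$. The reverse inclusion $\pi(G'_{x,r})\subseteq G_{x,r}$ holds for the same reason, applied to the generators of $G'_{x,r}$.

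\emph{Step 3: topology and $G_r$.} A continuous bijection between compact Hausdorff groups is a homeomorphism, which settles $G'_{x,r}\to G_{x,r}$. Since $G_r=\bigcup_x G_{x,r}$ and $G'_r=\bigcup_x G'_{x,r}$, Step 2 gives surjectivity of $G'_r\to G_r$. For injectivity, suppose $\pi(g)=\pi(h)$ with $g\in G'_{x,r}$ and $h\in G'_{y,r}$. Then $g=hz$ with $z\in\ker\pi(F)$ central. Thus $g$ and $hz$ both lie in $G'_{x,r}$, where $h$ is topologically unipotent and commutes with $z$. Both $g$ and $h$ are topologically unipotent, so $z=h^{-1}g$ is a product of commuting topologically unipotent elements and hence topologically unipotent. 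A topologically unipotent element of finite order prime to $p$ is trivial, so $z=1$ and $g=h$. Finally, the map is a local homeomorphism because it restricts to homeomorphisms on the open subgroups $G'_{x,r}$. It is therefore a homeomorphism.

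I expect the torus computation in Step 2 to be the main obstacle. One has to identify the filtration $T_r$ correctly for split tori and check that the cokernel of $X_*(T')\to X_*(T)$ is $N$-power torsion. The commutativity argument in Step 3 also needs care; if necessary, I would replace it by working inside a single pro-$p$ group containing both $g$ and $h$.
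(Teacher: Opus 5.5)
Your proof is correct. The paper gives no argument of its own here; the lemma is quoted from \cite{bkv2016depthrprojector} and used only for the isogeny $G^{sc}\times Z(G)^0\to G$ of split groups. So your restriction to split $G$ matches the paper's standing hypothesis. Your two steps are a complete proof: injectivity because $G'_{x,r}$ is pro-$p$ while $\ker\pi(F)$ has exponent prime to $p$, and surjectivity checked on the generators $T_r$ and $U_{\alpha,x,r}$.

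A few points to tighten:
\begin{itemize}
\item \textbf{Reduction to $x\in\mathcal A(T,F)$.} Conjugate by $G'(F)$ rather than $G(F)$, since $\pi(G'(F))\neq G(F)$ in general. $G'(F)$ already acts transitively on apartments, and the identification of buildings is $G'(F)$-equivariant.
\item \textbf{The torus step.} This is easier than you fear. Since $\ker\pi\subset Z(G')\subset T'$, tensoring $0\to X_*(T')\to X_*(T)\to C\to 0$ with $\bar F^\times$ identifies $\ker\pi(\bar F)$ with $\mathrm{Tor}_1(C,\bar F^\times)$. Hence $|C|=N$ exactly. For $A=1+\varpi^{\lceil r\rceil}\mathfrak O$ one then has $C\otimes A=0=\mathrm{Tor}_1(C,A)$, because $N\in\mathbb Z_p^\times$ and $A$ has no prime-to-$p$ torsion.
\item \textbf{Step 3.} The commutation of $g$ and $h$ is immediate from $g=hz$ with $z$ central.
\item \textbf{Centrality.} $\pi$ is central for any isogeny in characteristic $0$. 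The prime-to-$p$ hypothesis is what you actually use in Steps 1 and 2, not to get centrality.
\end{itemize}
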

For a central isogeny $\pi:G'\rightarrow G$, we have an isogeny of maximal tori $T'\rightarrow T$ and hence there is a natural identification $\bt'^*//W \xrightarrow{\sim}\bt^*//W$. 
\begin{lemma}
   Let $\pi:G'\rightarrow G$ be a central isogeny of degree prime to $p$. The distribution $E_{r,\vartheta}$ on $G(F)$ is stable if and only if $E_{r,\vartheta}$ on $G'(F)$ is stable. 
\end{lemma}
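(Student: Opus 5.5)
The plan is to use Lemma \ref{lemma:isogenyisom} to identify, through $\pi$, all the data entering the construction of $E_{r,\vartheta}$ on $G$ and on $G'$. After that, stability transfers because $\pi_!$ and $\pi^*$ preserve unstable functions on the relevant supports.

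\textbf{Step 1 (buildings and filtrations).} Since $\pi$ is a central isogeny, it induces a $\pi$-equivariant identification of reduced buildings $\B(G',F)\cong\B(G,F)$. This identification matches the refined structures $\B_m$, the chamber $\bar\mcC_m$ and hence the index sets $\Theta_m$. By Lemma \ref{lemma:isogenyisom}, $\pi$ restricts to homeomorphisms $G'_{\sigma,r}\to G_{\sigma,r}$, $G'_{\sigma,r+}\to G_{\sigma,r+}$ and $G'_r\to G_r$ for $r>0$. Because the degree is prime to $p$, $\pi$ is étale, so $d\pi$ identifies $\fg'\cong\fg$ and the Moy--Prasad quotients $\bg'_{\sigma,r}\cong\bg_{\sigma,r}$. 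These identifications are compatible with the Moy--Prasad isomorphisms and with the $\G'_\sigma(\F_q)\to\G_\sigma(\F_q)$ actions. The image of $\G'_\sigma(\F_q)$ may not be all of $\G_\sigma(\F_q)$, but invariance is not an issue here. The functions $\rho_{\sigma,r}(\varepsilon)$ are pulled back along $\chi_{\sigma,r}$, which is invariant under the full geometric group. Together with $\bt'^*//W\cong\bt^*//W$, this gives $\pi^*E_{\sigma,r,\vartheta}=E'_{\sigma,r,\vartheta}$ on $G'_{\sigma,r}$, once the Haar measures are normalized so that $\pi_!\mu'|_{G'_r}=\mu|_{G_r}$ (the constants $c^\mu_{\sigma,r}$ then agree). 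Summing over $\Sigma$ and passing to the limit, we get $E'_{r,\vartheta}=\pi^*(E_{r,\vartheta}|_{G_r})$ as distributions supported on $G'_r$. Equivalently, $E'_{r,\vartheta}(f')=E_{r,\vartheta}(\pi_!f')$ for $f'\in C_c^\infty(G'_r)$.

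\textbf{Step 2 (orbital integrals).} As in the Lie algebra argument above, both distributions are supported on $G_r$ (resp.\ $G'_r$), and $1_{G_r}$, $1_{G'_r}$ are stable. So it suffices to test against unstable functions supported there. I then need the claim that $f'\in C_c^\infty(G'_r)$ is unstable if and only if $\pi_!f'\in C_c^\infty(G_r)$ is unstable.

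On $G'_r$ the map $\pi$ is a bijection onto $G_r$. It maps strongly regular elements to strongly regular elements, since $d\pi$ is an isomorphism and elements of $G'_r$ are topologically unipotent, so no central ambiguity arises. It also maps stable classes to stable classes. The key compatibility is that for $\gamma'\in G'_r$ strongly regular, the stable orbital integral of $f'$ at $\gamma'$ equals, up to a positive constant independent of $f'$, that of $\pi_!f'$ at $\pi(\gamma')$. Here one writes $G'(F)\to G(F)$, with finite kernel and cokernel $H^1$-image. The $G(F)$-orbit of $\pi(\gamma')$ meets $G_r$ in a union of images of $G'(F)$-orbits, and these stay inside the stable class because $\pi(G'_r)=G_r$ is $G(F)$-stable. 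Summing over rational classes within the stable class, these pieces reassemble.

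\textbf{Main obstacle.} The hardest point is the bookkeeping in Step 2: comparing $D_{G'}(T')=\ker[H^1(F,T')\to H^1(F,G')]$ with $D_G(T)$ for the image torus. One must also check that the stable orbit, restricted to topologically unipotent elements, is matched exactly, including the contribution of $G(F)/\pi(G'(F))$. I would first try to avoid the cohomology entirely. The idea is to use that the stable orbital integral at a strongly regular element equals the integral over $G(\bar F)$-conjugates lying in $G(F)$, weighted by the measure on the torus. The bijection $\pi:G'_r\to G_r$ intertwines $G'(\bar F)$-conjugacy with $G(\bar F)$-conjugacy, because $\pi$ is surjective on $\bar F$-points with central kernel. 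The measures on $T'(F)\to T(F)$ differ by a fixed constant determined by our normalization.

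With Step 1 and Step 2 in hand, $E_{r,\vartheta}$ kills all unstable functions on $G$ if and only if $E'_{r,\vartheta}$ does on $G'$.
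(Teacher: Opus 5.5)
Your proposal is correct and follows the paper's route: reduce to unstable test functions supported on $G_r$ (resp.\ $G'_r$), then transport $E_{r,\vartheta}=\lim_{\Sigma}E^\Sigma_{r,\vartheta}$ through the homeomorphisms of Lemma~\ref{lemma:isogenyisom}. Your Step 2 makes explicit what the paper leaves implicit, namely that $f'\in C_c^\infty(G'_r)$ is unstable iff $\pi_!f'$ is; note that matching geometric conjugacy classes there genuinely uses topological unipotence, not merely centrality of $\ker\pi$, since it is what excludes $g'\gamma_1'g'^{-1}=\gamma_2'z$ with $1\neq z\in\ker\pi$ of order prime to $p$.
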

\begin{proof}
    Since $E_{r,\vartheta}$ is supported on $G_r$, it is enough to check whether $E_{r,\vartheta}(f)=0$ for every unstable $f$ supported on $G_r$. This follows from Lemma \ref{lemma:isogenyisom} and the fact that $E_{r,\vartheta} = \lim_{\Sigma \in \Theta_m}E^\Sigma_{r,\vartheta}$. 
\end{proof}
Combining the above results, we have our theorem
\begin{thm}
    Let $p$ be very good for $G$ in the sense of \cite[8.10]{bkv2016depthrprojector}, $r \in \Z_{(p)}\cap \Q_{> 0}$ and $\vartheta \in \DL_r=\bt^*//W(\F_q) $. Then, the invariant distribution $E_{r,\vartheta}$ is stable.
\end{thm}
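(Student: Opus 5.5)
The plan is to reduce from a general split $G$ with $p$ very good to the semisimple simply connected case, which has already been settled above. In that case the quasi-logarithm of \cite[Appendix C]{bkv2016depthrprojector} exists, and the stability of $E_{r,\vartheta}$ was deduced from three ingredients: stability of the dual blob $\fg^*_{-r,\vartheta}$, Waldspurger's theorem applied to $\mc E_{r,\vartheta}=\mc F^{-1}(1_{\fg^*_{-r,\vartheta}})$, and Lemma~\ref{lemma:pushforward}. The reduction has three steps. First, pass from $G$ to a $z$-extension or, more concretely, to a group of the form $G' = G^{sc}\times Z'$, where $Z'$ is a torus, together with a central isogeny $G'\to G$ of degree prime to $p$. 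Second, transport stability along this isogeny. Third, handle the product decomposition.

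For the first step I would use the standard construction: take the product of $G^{sc}$ with the connected center $Z^\circ$, so that the multiplication map $\pi: G^{sc}\times Z^\circ\to G$ is a central isogeny. Its kernel is $G^{sc}\cap Z^\circ$ modulo identification, and its order divides a quantity controlled by the root datum. Very goodness of $p$ together with $p\nmid |W|$ should force this degree to be prime to $p$. If that fails for some root datum, I would instead precompose with a central isogeny of tori to adjust the central factor. Once $\pi$ has degree prime to $p$, the preceding lemma (proved via Lemma~\ref{lemma:isogenyisom}) shows that stability of $E_{r,\vartheta}$ on $G(F)$ is equivalent to stability on $G'(F)$. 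Here I would check that the identification $\bt'^*//W\simeq \bt^*//W$ is compatible with the maps $J_{\sigma,r}$, so that "the same" $\vartheta$ matches on both sides. It is also necessary to check that Lemma~\ref{lemma:isogenyisom} intertwines stable conjugacy of strongly regular elements in $G'_r$ and $G_r$, so that unstable test functions correspond. This holds because $\pi$ is a homeomorphism on $G_r$ and induces a bijection on stable classes of topologically unipotent strongly regular elements.

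For the third step, write $\vartheta=(\vartheta_1,\vartheta_2)$ using $\bt'^*//W = \bt_{sc}^*//W\times \bar{\mathfrak z}^*$. Then $E_{r,\vartheta}$ on $G^{sc}\times Z^\circ$ factors as an exterior product $E^{G^{sc}}_{r,\vartheta_1}\boxtimes E^{Z^\circ}_{r,\vartheta_2}$. This factorization follows by checking it on each $E^\Sigma_{r,\vartheta}$, since Moy--Prasad filtrations, refined facets, and the maps $\rho_{\sigma,r}$ all split as products. The first factor is stable by the simply connected case, using Lemma~\ref{lemma:verygoodlog}. On a torus every distribution is stable, and stability of an exterior product with one stable factor follows from the product decomposition of stable orbital integrals.

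I expect the main obstacle to be the degree-prime-to-$p$ claim for the isogeny. The compatibility of $\vartheta$ and of the projector construction under $\pi$ is likely routine but tedious. If the degree claim fails for some groups, I would fall back on working directly with a quasi-logarithm for $G$ built from one on $G^{sc}$ and an exponential on the central torus, via \cite[8.10--8.12]{bkv2016depthrprojector}.
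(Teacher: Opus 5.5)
Your proposal is correct, and its first two steps are the paper's. The paper uses the same central isogeny $G^{sc}\times Z(G)^0\to G$, takes the fact that its degree is prime to $p$ from \cite[8.14]{bkv2016depthrprojector}, and transfers stability via Lemma~\ref{lemma:isogenyisom}. Your fallback is not needed: projecting to the first factor embeds the kernel into $Z(G^{sc})$, whose order is the index of the root lattice in the weight lattice, and every prime dividing that index divides $|W|$.

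The routes differ in the last step. The paper does not factor the projector. It notes that the split torus $Z(G)^0$ also has an $r$-logarithm, so $G^{sc}\times Z(G)^0$ has one, and it reruns the dual-blob/Waldspurger/pushforward argument directly on the product group. You instead use the semisimple simply connected case as a black box. You write $E_{r,\vartheta}=E^{G^{sc}}_{r,\vartheta_1}\boxtimes E^{Z^\circ}_{r,\vartheta_2}$. This is valid because the reduced building of $Z^\circ$ is a point, so the torus factor is independent of the refined facet and pulls out of every $E^\Sigma_{r,\vartheta}$. You then use that stable classes in a torus are single points. For unstable $f$, each slice $f(\cdot,z)$ is unstable on $G^{sc}(F)$, since $\operatorname{SO}_{(g,z)}(f)=\operatorname{SO}_g(f(\cdot,z))$. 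Expanding $f=\sum_j f(\cdot,z_j)\otimes 1_{z_jK}$ for a small open compact $K\subset Z^\circ(F)$ then gives $E_{r,\vartheta}(f)=0$.

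The paper's route is shorter. Yours avoids re-verifying the logarithm and Fourier machinery for a non-semisimple group, at the price of the routine factorization check.

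Your check that the isogeny matches stable classes of strongly regular elements in $G'_r$ and $G_r$ is also sound, and the paper leaves it implicit. A nontrivial central element of order prime to $p$ times a topologically unipotent element is never topologically unipotent. Only stable classes correspond bijectively, not rational ones (for example under $SL_2\to PGL_2$), but stable classes are all that is needed.
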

\begin{proof}
     Consider the central isogeny $\pi:G^{sc} \times Z(G)^0 \rightarrow G$. Following \cite[8.14]{bkv2016depthrprojector}, we see that the degree of $\pi$ is prime to $p$. Hence, by using Lemma \ref{lemma:isogenyisom}, it is enough to show the stability of $E_{r,\vartheta}$ on $G^{sc}\times Z(G)^0$. By Lemma \ref{lemma:verygoodlog}, $G^{sc} $ admits an $r$-logarithm and since $T$ is $F$-split, $Z(G)^0$ is split and admits an $r$-logarithm. Hence our assertion follows from previous results.
\end{proof}

\printbibliography

\bigskip
\textsc{Department of Mathematics, University of Minnesota, Minneapolis, MN-55455}\\
\textit{E-mail address}: \texttt{bhatt356@umn.edu} 

\medskip

\textsc{Department of Mathematics, University of Maryland, College Park, MD-20742 }\\
\textit{E-mail address}: \texttt{shenghao@umd.edu}
\end{document}